\tikzstyle arrowstyle=[scale=1]
\tikzstyle directed=[postaction={decorate,decoration={markings,
    mark=at position .65 with {\arrow[arrowstyle]{stealth}}}}]
\tikzstyle reverse directed=[postaction={decorate,decoration={markings,
    mark=at position .65 with {\arrowreversed[arrowstyle]{stealth};}}}]
\definecolor{mygreen}{RGB}{25,127,25}
\newtheorem{theorem}{Theorem}[section]
\newtheorem{proposition}[theorem]{Proposition}
\newtheorem{corollary}[theorem]{Corollary}
\newtheorem{lemma}[theorem]{Lemma}
\theoremstyle{definition}
\newtheorem{definition}{Definition}[section]
\newtheorem{assumption}{Assumption}
\newtheorem{remark}{Remark}[section]
\newtheorem*{thm*}{Theorem}
\newtheorem*{lem*}{Lemma}
\newtheorem*{prop*}{Proposition}
\newcommand{\mbbN}{{\mathbb N}}
\newcommand{\mbbZ}{{\mathbb Z}}
\newcommand{\mcB}{\mathcal{B}}
\newcommand{\mcE}{\mathcal{E}}
\newcommand{\mcF}{\mathcal{F}}
\newcommand{\mcD}{\mathcal{D}}
\def\mcC{\mathcal{C}}
\def\mcI{\mathcal{I}}
\def\mcJ{\mathcal{J}}
\def\mcN{\mathcal{N}}
\def\sym{\operatorname{sym}}
\def\asym{\operatorname{sym}^\bot}
\def\Pr{\operatorname{P}\!}
\numberwithin{equation}{section}
\title{Heat kernel analysis on diamond fractals}
\author{Patricia Alonso Ruiz}
\address{Department of Mathematics, Texas A{\&}M University, College Station, TX 3368-77843}
\email{paruiz@math.tamu.edu}
\subjclass[2010]{35K08; 60J60; 60J45; 31C25; 28A80}
\keywords{heat kernel; diffusion process; heat semigroup; Dirichlet form; inverse limit space; fractals}
\thanks{Research partly supported by the NSF grant DMS~1951577.}
\begin{document}
\begin{abstract}
This paper presents a detailed analysis of the heat kernel on an $(\mbbN\!\times\!\mbbN)$-parameter family of compact metric measure spaces which 
do not satisfy the volume doubling property. 
In particular, uniform bounds of the heat kernel and its Lipschitz continuity, as well as the continuity of the corresponding heat semigroup are studied; a specific example is presented revealing a logarithmic correction. The estimates are further applied to derive several functional inequalities of interest in describing the convergence to equilibrium of the diffusion process.
\end{abstract}

\maketitle
\section{Introduction}
The present paper
investigates the behavior of intrinsic heat diffusion processes in \textit{generalized diamond fractals} through the study of their associated \textit{heat kernel}. 
These fractals 
constitute a parametric family of compact metric measure spaces that arises as a generalization of a hierarchical lattice model appearing in the physics and geometry literature~\cite{Yan88,HK10,AC17}. With a structure reminiscent of the scale irregular fractals treated in~\cite{BH97}, they present some additional non-standard geometric features that make them 
a relevant object of study. 
Specially because 
diamond fractals happen 
to admit a heat kernel with a rather explicit expression~\cite{AR18}, they are most suitable to analyze non-standard model behaviors.
\medskip

Due to their wide range of applications, there is an extensive literature concerning the investigation of heat kernels from different points of view~\cite{JL01,DS01,Gri06,Hof06,Dun12}. 
In this paper, special attention is paid to the rich 
interplay 
between analysis, probability and geometry
that comes to light through 
the study of functional inequalities and estimates related to them, see e.g.~\cite{SC10,BGL14} and references therein.
One of the main reasons to investigate this type of question in the particular setting of generalized diamond fractals is that these spaces, which 
may be described 
via inverse limits of metric measure graphs, 
see Figure~\ref{F:D33}, lack regularity properties such as \textit{volume doubling} or \textit{uniformly bounded degree}, that are often assumed in the literature~\cite{BCG01,BCK05,CK15,CJKS17}. 

\medskip

One of the aims of the paper is thus to set the starting point of a 
larger 
research program
, where 
diamond fractals may be considered as model spaces towards a classification of inverse limit spaces in terms of their heat semigroup properties. On the one hand, this would contribute to the existing research carried out by Cheeger-Kleiner from a more purely geometric point of view in~\cite{CK13,CK15}. On the other hand, some of this analysis may transfer to direct limits of metric measure graphs, so-called \textit{fractal quantum graphs}~\cite{ARKT16}.

\medskip

In order to investigating how the measure-geometric properties of diamond fractals are reflected in the analysis of the diffusion process, the Lipschitz continuity of the heat kernel $p_t$ and the heat semigroup $\{P_t\}_{t\geq 0}$ treated in Section~\ref{S:contHK} and Section~\ref{S:contHS} play a central role in this paper.
Some estimates for the heat kernel in a particular class of diamond fractals were discussed in~\cite[Section 4]{HK10}, however Lipschitz estimates remained unexplored. 
Dealing with
this rather non-standard setting makes
much of the general abstract theory not directly applicable, and
being able to work with explicit expressions will become crucial to approach its analysis. As an example, in the particular case of a (regular) diamond fractal with parameters $n$ and $j$, Corollary~\ref{C:HKsup_regular} provides the estimate

\begin{equation*}
\frac{1}{\sqrt{4\pi }}t^{-1/2}\leq \|p_t\|\leq \frac{1}{2\pi}+\frac{1}{\sqrt{4\pi }}t^{-1/2}+C_{n,j}\,t^{-\frac{1}{2}\big(1+\frac{\log n}{\log j}\big)}
\end{equation*}
with a constant $C_{n,j}$ that can be explicitly bounded.
Continuity estimates of the heat semigroup are deeply connected to the geometry of the underlying space. This connection is displayed for instance in so-called \textit{Bakry-\'Emery type curvature conditions}. 
In the classical setting of a complete and connected Riemmanian manifold, 
such a condition can be expressed as an inequality involving the gradient of the semigroup and it is known to be equivalent to a bound of the Ricci curvature of 
the space~\cite{Bak97,Led00,vRS05}. In recent years, a significant amount of research has been carried out to characterize curvature bounds in the context of metric measure spaces and Dirichlet spaces 
by means of weak versions of the original Bakry-\'Emery condition, see e.g.~\cite{LV07,Stu06,Stu06b,AGS15,AGS17}. 

\medskip

The lack of the volume doubling property leaves generalized diamond fractals out of general frameworks such as the Alexandrov spaces with curvature bounded from below treated in~\cite{GKO13}. 
As a first approach to investigate this type of connection with curvature in the present setting, Section~\ref{S:contHS} deals with the regularity of the heat semigroup $\{P_t\}_{t\geq 0}$ and its relation to the so-called \textit{weak Barky-\'Emery nonnegative curvature condition} recently introduced in the framework of Dirichlet spaces with heat kernel bounds~\cite{ABCRST3}. 
The most concrete computable case presented in this paper,
see~Theorem~\ref{T:BE_reg_log}, 
reveals a logarithmic correction term
\begin{equation*}
|P_tf(x)-P_t f(y)|\leq C\frac{|\log t|}{\sqrt{t}}d(x,y)\|f\|_\infty,\qquad 0<t<1,
\end{equation*}
which reflects 
the inhomogeneous nature of diamond fractals that allows the measure to be very different at different points. This type of phenomenon is observed in diffusion processes with multifractal structures, see e.g.~\cite{BK01}.

\medskip

To further illustrate the applications of the continuity estimates obtained, the last section briefly discusses
several functional inequalities related to the diffusion process that sometimes involve the \textit{Dirichlet form} associated with the process. 
Section~\ref{S:igDF} is devoted to the investigation of relevant potential analytic properties of the diffusion 
process such as the characterization of the Dirichlet form on a diamond fractal as a suitable (Mosco) limit of Dirichlet forms on its approximating metric measure graphs; c.f.~Theorem~\ref{T:DF_props}. 
The results mainly rely on the observation that ``liftings'' of functions in the approximating metric measure graphs provide a very natural core of functions, see Theorem~\ref{T:core_ig}; this property holds in the more general setting of inverse limits and will be the subject of a forthcoming paper. 

\medskip

Apart from the inequalities outlined in Section~\ref{S:ineqs}, it is noteworthy to point out the fact that generalized diamond fractals do not satisfy the \textit{elliptic Harnack inequality}. This was proved in~\cite{HK10} for a particular (self-similar) class of diamond fractals and in general it follows directly from the recent result~\cite[Theorem 3.11]{BM18} since generalized diamonds are not \textit{metric doubling}, see~\cite{BM18} and references therein.

%
%
%
\medskip

The paper is organized as follows: Section~\ref{S:GDF} starts with a brief review of the inverse limit construction of generalized diamond fractals that was carried out in~\cite{AR18}. Some basic and practical metric properties are discussed and the working explicit expression of the heat kernel is given in Theorem~\ref{T:HK_Finfty}. Section~\ref{S:igDF} investigates potential theoretical aspects of the diffusion process and its relation with the inverse limit structure in terms of the infinitesimal generator and the Dirichlet form. In particular, Theorem~\ref{T:core_ig} identifies a core of functions that is widely used in the subsequent analysis. The main results of the paper are concentrated in Section~\ref{S:contHK} and Section~\ref{S:contHS}. Theorem~\ref{T:supHK} provides a general uniform estimate of the heat kernel, whereas Theorem~\ref{T:LipFinfty} and Theorem~\ref{T:BE_Finfty} deal with the Lipschitz continuity in space of the heat kernel and of the heat semigroup, respectively. To gain a better understanding of their time-dependence, the results are applied to a particular class of diamond fractals for which computations become more tractable, c.f.\ Theorem~\ref{T:BE_reg_log}. Finally, Section~\ref{S:ineqs} outlines further applications of the estimates to study the semigroup and the diffusion process, namely logarithmic Sobolev inequality, ultracontractivity and Poincar\'e inequalities.

\section{Generalized diamond fractals}\label{S:GDF}
This section sets up notation and briefly reviews the construction of generalized diamond fractals as inverse limits of metric measure graphs presented in~\cite{AR18}. We point out some metric observations that will become useful when dealing with estimates in later sections, and summarize the key results concerning the natural diffusion process associated with these spaces. In particular, Lemma~\ref{L:Intertwin} and Theorem~\ref{T:HK_Finfty} restate crucial facts about the heat semigroup and the heat kernel that are essential to the analysis carried out subsequently.

\subsection{Inverse limit construction}
A diamond fractal, see Figure~\ref{F:D33}, arises from a sequence of metric measure graphs and is characterized by two parameter sequences $\mcJ=\{j_i\}_{i\geq 0}$ and $\mcN=\{n_i\}_{i\geq 0}$ that describe its construction. Each sequence indicates, respectively, the number of new vertices added from one graph to its next generation, and the number of additional edges given to each vertex. 

\begin{figure}[H]
\centering
\renewcommand{\arraystretch}{0.1}
\begin{tabular}{p{0.15\textwidth}p{0.2\textwidth}p{0.2\textwidth}}
$F_0$ & $F_1$ & $F_2$\\
\begin{tikzpicture}
\foreach \a in {0,60,120,180,240,300}{
\draw ($(\a:1)$) to[out=90+\a,in=330+\a] ($(60+\a:1)$);
\fill[color= blue] ($(\a+60:1)$) circle (1pt);
}
\end{tikzpicture}
&
\begin{tikzpicture}
\foreach \a in {0,60,120,180,240,300}{
\draw ($(\a:1)$) to[out=90-55+\a,in=15+\a] ($(60+\a:1)$);
\draw ($(\a:1)$) to[out=90+\a,in=330+\a] ($(60+\a:1)$);
\draw ($(\a:1)$) to[out=90+65+\a,in=-90+5+\a] ($(60+\a:1)$);
\fill[color= blue] ($(\a+60:1)$) circle (1pt);
\fill[color= blue] ($(30+\a:1)$) circle (1pt);
\fill[color= blue] ($(30+\a:1.15)$) circle (1pt);
\fill[color= blue] ($(30+\a:0.725)$) circle (1pt);
}
\end{tikzpicture}
&
\begin{tikzpicture}
\foreach \a in {0,60,120,180,240,300}{
\draw ($(\a:1)$) to[out=30+\a,in=340+\a] ($(30+\a:1.15)$);
\draw ($(\a:1)$) to[out=50+\a,in=320+\a] ($(30+\a:1.15)$);
\draw ($(\a:1)$) to[out=70+\a,in=300+\a] ($(30+\a:1.15)$);
\draw ($(\a:1)$) to[out=\a-30,in=\a-340] ($(\a-30:1.15)$);
\draw ($(\a:1)$) to[out=\a-50,in=\a-320] ($(\a-30:1.15)$);
\draw ($(\a:1)$) to[out=\a-70,in=\a-300] ($(\a-30:1.15)$);
}
\foreach \a in {0,60,120,180,240,300}{
\draw ($(\a:1)$) to[out=70+\a,in=320+\a] ($(30+\a:1)$);
\draw ($(\a:1)$) to[out=90+\a,in=300+\a] ($(30+\a:1)$);
\draw ($(\a:1)$) to[out=110+\a,in=280+\a] ($(30+\a:1)$);
\draw ($(\a:1)$) to[out=\a-70,in=\a-320] ($(\a-30:1)$);
\draw ($(\a:1)$) to[out=\a-90,in=\a-300] ($(\a-30:1)$);
\draw ($(\a:1)$) to[out=\a-110,in=\a-280] ($(\a-30:1)$);
}
\foreach \a in {0,60,120,180,240,300}{
\draw ($(\a:1)$) to[out=110+\a,in=320+\a] ($(30+\a:.725)$);
\draw ($(\a:1)$) to[out=130+\a,in=300+\a] ($(30+\a:.725)$);
\draw ($(\a:1)$) to[out=150+\a,in=280+\a] ($(30+\a:.725)$);
\draw ($(\a:1)$) to[out=\a-110,in=\a-320] ($(\a-30:.725)$);
\draw ($(\a:1)$) to[out=\a-130,in=\a-300] ($(\a-30:.725)$);
\draw ($(\a:1)$) to[out=\a-150,in=\a-280] ($(\a-30:.725)$);
\fill[color= blue] ($(\a+60:1)$) circle (1pt);
\fill[color= blue] ($(30+\a:1)$) circle (1pt);
\fill[color= blue] ($(30+\a:1.15)$) circle (1pt);
\fill[color= blue] ($(30+\a:0.725)$) circle (1pt);
}
\end{tikzpicture}
\end{tabular}
\caption{\small{Approximations of a diamond fractal with parameters $j_1,n_1=3,j_2=2,n_2=3$.}}
\label{F:D33}
\end{figure}
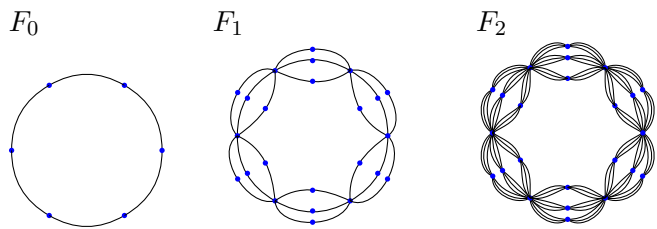

\begin{definition}\label{D:prod_seq}
Let $\mcJ=\{j_\ell\}_{\ell\geq 0}$, $\mcN=\{n_\ell\}_{\ell\geq 0}$ be sequences with $j_0=1=n_0$ and $j_\ell,n_\ell\geq 2$ for all $\ell\geq 1$. Set $J_0=N_0=1$ and define for any $0\leq k\leq i$ the products
\begin{equation*}
J_{k,i}:=\prod_{\ell=k}^ij_\ell,\qquad\qquad N_{k,i}:=\prod_{\ell=k}^in_\ell.
\end{equation*}
In particular, we write $J_i:=J_{0,i}$ and $N_i:=N_{0,i}$.
\end{definition}

The inverse system associated with a diamond fractal is built upon a sequence of metric measure spaces $(F_i,d_i,\mu_i)$ that can be defined inductively in the following manner.
\begin{definition}\label{D:Fi}
Let $F_0$ denote the unit circle and set $\vartheta_0:=\{0,\pi\}$, $B_0:=\vartheta_0$. For each $i\geq 1$, define $\vartheta_i:=\big\{\frac{\pi k}{J_i}~|~0< k<2J_i,~k\hspace*{-.5em}\mod j_i\not\equiv 0\big\}$, set $B_1=B_0\cup\vartheta_1$ and
\begin{equation*}
B_i:=B_{i-1}\cup(\vartheta_i\times[n_1]\times\ldots\times[n_{i-1}])\qquad i\geq 2,
\end{equation*}
with $[n_k]=\{1,\ldots,n_k\}$. For each $i\geq 1$, we define the quotient space
\[
F_i:=F_{i-1}\times[n_i]~/~{\stackrel{i}{\sim}},
\]
where $xw\stackrel{i}{\sim} x'w'$ if and only if $x,x'\in B_i$.
\end{definition}
The set $B_i$ contains the identification (branching, junction) points that yield $F_i$ and satisfies $B_i\subseteq F_{i-1}$, see marked dots in Figure~\ref{F:D33}. As a metric measure graph, each $F_i$ can be regarded as the union of branches ($i$-cells) isomorphic to intervals of length $\pi/J_i$ that suitably connect the vertices in $B_{i-1}$. The measure $\mu_i$ that is naturally induced on each $F_i$ is obtained by redistributing the mass of each branch in the previous level uniformly between its ``successors''. The corresponding (geodesic) distance $d_i$ on $F_i$ coincides with the Euclidean metric on each branch.

\medskip

Following the construction scheme from Definition~\ref{D:Fi} one can produce a family of measurable mappings $\phi_{ik}\colon F_i\to F_k$, $0\leq k\leq i$, in such a way that the sequence $\{(F_i,\mu_i,\{\phi_{ik}\}_{k\leq i})\}_{i\geq 0}$ defines an inverse (projective) system of measure spaces. We refer to~\cite[Section 2]{AR18} for a precise definition of inverse systems and of the mappings $\phi_{ik}$, summarized in Figure~\ref{F:projlim}. 

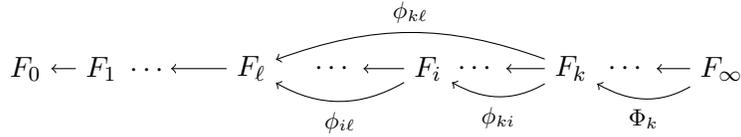
\begin{figure}[H]
\begin{tikzpicture}
\node (F0) at ($(0:0)$) {$F_0$};
\node (F1) at ($(0:1.35)$) {$F_1\;\cdots$};
\node (dots) at ($(0:1.8)$) {};
\node (Fl) at ($(0:3)$) {$F_\ell$};
\node (Fil) at ($(0:4.1)$) {$\cdots$};
\node (Fi) at ($(0:5.35)$) {$F_i$};
\node (dotss) at ($(0:6)$) {$\cdots$};
\node (Fk) at ($(0:7.25)$) {$F_k$};
\node (dotsss) at ($(0:8)$) {$\cdots$};
\node (Finf) at ($(0:9.25)$) {$F_\infty$};

\draw[->] (F1) -- (F0);
\draw[<-] (dots) -- (Fl);
\draw[->] (Finf) -- (dotsss);
\draw[->] (Fi) -- (Fil);
\draw[->] (Fk) -- (dotss);

\draw[<-] (Fl) to[out=330,in=210] node[below] {\footnotesize{$\phi_{i\ell}$}} (Fi);
\draw[<-] (Fl)  to[out=20,in=160] node[above] {\footnotesize{$\phi_{k\ell}$}} (Fk);
\draw[<-] (Fi)  to[out=330,in=210] node[below] {\footnotesize{$\phi_{ki}$}} (Fk);
\draw[<-] (Fk)  to[out=330,in=210] node[below] {\footnotesize{$\Phi_{k}$}} (Finf);
\end{tikzpicture}
\vspace*{-1em}
\caption{Projective system structure.}
\label{F:projlim}
\end{figure}

A generalized diamond fractal of parameters $\mcJ$ and $\mcN$ will arise as the inverse (projective) limit of the above-mentioned inverse system. In this way, the limit space $(F_\infty,\mu_\infty)$ is equipped with a family of measurable ``projection mappings'', $\Phi_i\colon F_\infty\to F_i$, that will play a major role in the construction of the associated diffusion process.
To fully realize a diamond fractal as a metric measure space, we briefly discuss in the next paragraph the metric that naturally comes along with the inverse limit construction.
\subsection{Metric remarks}
By definition, the graphs $F_i$ are equipped with the geodesic metric $d_i$ induced by the Euclidean on each edge. The following observation describes how metrics in different levels are related by means of the mappings $\phi_{ik}$ described in the previous section, and it justifies the definition of the metric on the limit space $F_\infty$. For ease of the notation, we write for each $i\geq 1$, $\phi_i:=\phi_{i(i-1)}\colon F_i\to F_{i-1}$.
\begin{lemma}\label{L:dist}
For any $i\geq 1$ and $x,y\in F_i$ it holds that
\begin{equation}\label{E:dist}
d_{i-1}(\phi_i(x),\phi_i(y))\leq d_i(x,y)\leq d_{i-1}(\phi_i(x),\phi_i(y))+2\pi/J_i.
\end{equation}
Moreover, $\displaystyle d_k(\phi_{ik}(x),\phi_{ik}(y))\leq d_i(x,y)$ for any $0\leq k\leq i$.
\end{lemma}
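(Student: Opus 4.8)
The plan is to prove the two displayed inequalities separately and then obtain the ``moreover'' statement by iteration. Throughout I would use the description of $F_i$ from Definition~\ref{D:Fi} as $n_i$ isometric copies of $F_{i-1}$ glued along the branch set $B_i\subseteq F_{i-1}$, equipped with the geodesic metric $d_i$ in which each $i$-cell is a segment of length $\pi/J_i$ whose two endpoints lie in $B_i$. The projection $\phi_i\colon F_i\to F_{i-1}$ forgets the copy index, so it restricts to an isometry onto $F_{i-1}$ on each of the $n_i$ copies; in particular it maps every $i$-cell isometrically onto its defining segment in $F_{i-1}$.

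For the lower bound I would argue that $\phi_i$ is $1$-Lipschitz. Any rectifiable path in $F_i$ joining $x$ to $y$ has image under $\phi_i$ a path in $F_{i-1}$ joining $\phi_i(x)$ to $\phi_i(y)$ of the same length, since $\phi_i$ preserves length cell-by-cell. Taking the infimum over all such paths yields $d_{i-1}(\phi_i(x),\phi_i(y))\le d_i(x,y)$, the left inequality in~\eqref{E:dist}.

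For the upper bound I would exhibit an explicit path of the claimed length. Write $\bar x=\phi_i(x)$, $\bar y=\phi_i(y)$, and let $w_x,w_y$ be the copies containing $x$ and $y$ (if either point lies in $B_i$ the argument only simplifies, since such a point is shared by all copies). Pick $b\in B_i$ to be an endpoint of the $i$-cell containing $\bar x$; as that cell has length $\pi/J_i$, its subarc from $\bar x$ to $b$ provides a path of length $d_{i-1}(\bar x,b)\le \pi/J_i$, whose lift inside copy $w_x$ reaches the point $(b,w_x)$. Because $b\in B_i$, this point is identified with $(b,w_y)$, so I may continue along the lift into copy $w_y$ of a geodesic from $b$ to $\bar y$, ending at $y$. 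The total length is $d_{i-1}(\bar x,b)+d_{i-1}(b,\bar y)$, and the triangle inequality in $F_{i-1}$ gives $d_{i-1}(b,\bar y)\le d_{i-1}(b,\bar x)+d_{i-1}(\bar x,\bar y)$. Hence
\begin{equation*}
d_i(x,y)\le 2\,d_{i-1}(\bar x,b)+d_{i-1}(\bar x,\bar y)\le d_{i-1}(\phi_i(x),\phi_i(y))+\frac{2\pi}{J_i},
\end{equation*}
which is the right inequality in~\eqref{E:dist}.

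Finally, the ``moreover'' statement follows by induction on $i-k$. Writing $\phi_{ik}=\phi_{k+1}\circ\cdots\circ\phi_i$, the base case $i=k$ is trivial, and the left inequality of~\eqref{E:dist} applied to $\phi_{k+1}$ gives $d_k(\phi_{ik}(x),\phi_{ik}(y))\le d_{k+1}(\phi_{i(k+1)}(x),\phi_{i(k+1)}(y))$, after which the inductive hypothesis closes the chain down to $d_i(x,y)$. I expect the genuinely delicate point to be the upper bound: one must use that a path may change copies only at a point of $B_i$, so that passing from $w_x$ to $w_y$ forces a detour to some $b\in B_i$, and then control the length of that detour by the cell size $\pi/J_i$. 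Everything else reduces to the facts that $\phi_i$ contracts distances and acts isometrically within each copy.
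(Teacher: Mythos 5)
Your proof is correct and follows essentially the same route as the paper, which simply asserts that \eqref{E:dist} ``follows by construction'' from the branch length $\pi/J_i$ and then iterates the left inequality along $\phi_{ik}=\phi_{k+1}\circ\cdots\circ\phi_i$ for the ``moreover'' part. You have merely supplied the details the paper leaves implicit: the $1$-Lipschitz property of $\phi_i$ for the lower bound and the explicit detour through a point $b\in B_i$ for the upper bound.
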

\begin{proof}
Notice that the length of a branch in level $i$ is $\pi/J_i$. Inequality~\eqref{E:dist} thus follows by construction. Applying the left hand side of~\eqref{E:dist} repeatedly and using the fact that $\phi_{ik}=\phi_{k+1}{\circ}\cdots{\circ}\phi_i$ we obtain
\begin{align*}
d_k(\phi_{ik}(x),\phi_{ik}(y))&= d_k(\phi_{k+1}(\phi_{i(k+2)}(x)),\phi_{k+1}(\phi_{i(k+2)}(y)))\leq d_{k+1}(\phi_{i(k+2)}(x),\phi_{i(k+2)}(y))\\
&\leq\ldots\leq d_{i-1}(\phi_i(x),\phi_i(y))\leq d_i(x,y).
\end{align*}
\end{proof}
The metrics $d_i$ also satisfy a chain property that will be particularly useful to compute estimates in later sections.
\begin{lemma}\label{L:chain_cond}
For any $x,y\in F_i$ there exist $z_1,\ldots, z_{m_{xy}}\in B_i$ with $1\leq m_{xy}\leq J_i$  and such that
\begin{equation*}
d_i(x,y)=d_i(x,z_1)+\sum_{\ell=1}^{m_{xy}-1}d_i(z_\ell,z_{\ell+1})+d_i(z_{m_{xy}},y).
\end{equation*}
\end{lemma}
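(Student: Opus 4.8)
The plan is to exploit that each $F_i$ is a finite connected metric graph whose edges are the branches of length $\pi/J_i$ and whose distinguished vertices are the branching points collected in $B_i$. First I would fix $x,y\in F_i$ and invoke compactness and connectedness to produce a length-minimizing path $\gamma$ realizing $d_i(x,y)$; being a geodesic, $\gamma$ is simple, since any self-intersection would provide a strictly shorter competitor. Reading off the branching points that $\gamma$ meets, in the order it meets them, produces a finite increasing sequence $z_1,\dots,z_{m_{xy}}\in B_i$. (Alternatively one could induct on $i$ using the projections $\phi_i$ and the left inequality in~\eqref{E:dist}, but the direct geodesic argument seems cleanest.)

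Next I would use additivity of length along $\gamma$: because $\gamma$ is distance-minimizing, the length of any sub-arc equals the $d_i$-distance between its endpoints, so cutting $\gamma$ at the $z_\ell$ yields exactly
\[
d_i(x,y)=d_i(x,z_1)+\sum_{\ell=1}^{m_{xy}-1}d_i(z_\ell,z_{\ell+1})+d_i(z_{m_{xy}},y).
\]
Each middle sub-arc $z_\ell\to z_{\ell+1}$ traverses a single branch and hence has length exactly $\pi/J_i$, while the two end pieces lie inside the branches containing $x$ and $y$ and have length at most $\pi/J_i$. When $x$ and $y$ share a branch the geodesic meets no intervening branching point and the statement degenerates; otherwise $m_{xy}\geq 1$ as required.

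Finally, the cardinality bound should follow from the global size of $F_i$. Since $d_i(0,\pi)=J_i\cdot(\pi/J_i)=\pi$ and this is maximal, $\diam(F_i)=\pi$, so $d_i(x,y)\le\pi$; the $m_{xy}-1$ full interior branches alone already force $(m_{xy}-1)\,\pi/J_i\le d_i(x,y)\le\pi$. The main obstacle I anticipate is precisely this last counting step: extracting the sharp constant $J_i$ (rather than $J_i+1$) requires arguing that a simple shortest path cannot run through more branches than a half-meridian contains, i.e.\ that $\gamma$ is genuinely minimizing against the competing route around the circle-like global structure. Matching the local graph combinatorics to the stated range $1\le m_{xy}\le J_i$, together with the degenerate same-branch case, is the delicate point that must be handled carefully.
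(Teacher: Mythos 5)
The paper states this lemma without proof---it is treated as immediate from the graph structure of $F_i$---so there is no argument of record to compare against; your geodesic-decomposition strategy is the natural one and its skeleton is correct. The two points you flag at the end are, however, genuine gaps in what you wrote, and both can be closed. First, the diameter: $\diam F_i=\pi$ is true, but ``and this is maximal'' needs an argument, e.g.\ by induction on $i$. Given $x,y\in F_i$ lying over $x',y'\in F_{i-1}$, either some geodesic $\gamma'$ of $F_{i-1}$ from $x'$ to $y'$ meets $B_i$, in which case $\gamma'$ can be rerouted at such a meeting point through the level-$i$ copies containing $x$ and $y$ without changing its length, so that $d_i(x,y)\le d_{i-1}(x',y')\le\pi$; or else $x'$ and $y'$ lie in a common level-$i$ arc and $d_i(x,y)\le 2\pi/J_i\le\pi$ by routing through either endpoint of that arc. (Your alternative of comparing against ``the route around the circle'' is harder to set up cleanly: two branches of $F_i$ over the same arc of $F_0$ need not share endpoints unless their address words agree up to the level at which the endpoint in question was born, so an arbitrary choice of one branch per arc does not produce an embedded circle.)

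Second, the count. Take $z_1,\dots,z_m$ to be the points of $B_i$ met by the \emph{open} arc $\gamma\setminus\{x,y\}$, in order. The $m-1$ middle sub-arcs are full branches of length $\pi/J_i$, as you say. If neither $x$ nor $y$ lies in $B_i$, the two end pieces have strictly positive length, so $(m-1)\pi/J_i<\pi$ and hence $m\le J_i$. If $x\in B_i$, then the first sub-arc from $x$ to $z_1$ is itself a full branch (its interior meets no branching point), and likewise at $y$; in those cases the same computation gives $m\le J_i-1$. So the sharp bound $m_{xy}\le J_i$ requires no comparison against competing global routes at all---only the convention that $x$ and $y$ themselves are not listed among the $z_\ell$. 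Finally, you are right that the stated lower bound $m_{xy}\ge 1$ fails literally when $x$ and $y$ lie in the interior of a common branch, since no point of $B_i$ then sits on the geodesic; the lemma should be read as allowing an empty chain in that case, which is all that is needed for its applications in Sections~\ref{S:contHK} and~\ref{S:contHS}, where it serves only to reduce a general pair of points to same-branch (or same-configuration) pairs via the triangle inequality.
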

As a direct consequence of Lemma~\ref{L:dist}, for any $x,y\in F_\infty$ the sequence $\{d_i(\Phi_i(x),\Phi_i(y))\}_{i\geq 0}$ converges uniformly and we may thus define
\begin{equation}\label{E:def_dist}
d_\infty (x,y)=\lim_{i\to\infty}d_i(\Phi_i(x),\Phi_i(y))
\end{equation}
to be the natural metric that carries the inverse limit structure of the generalized diamond $F_\infty$.

\begin{definition}
Let $\mcJ=\{j_\ell\}_{\ell\geq 0}$ and $\mcN=\{n_\ell\}_{\ell\geq 0}$ be sequences with $j_0=n_0=1$ and $j_\ell,n_\ell\geq 2$. The generalized diamond fractal of parameters $\mcJ$ and $\mcN$ is defined to be the inverse limit of the system $\{(F_i,d_i,\mu_i,\{\phi_{ik}\}_{k\leq i})\}_{i\geq 0}$. If $j_\ell=j$ and $n_\ell=n$ for some $j,n\geq 2$ and all $\ell\geq 1$, we say that the generalized diamond fractal is regular.
\end{definition}

We finish this paragraph by observing that~\eqref{E:dist} together with the fact that the mappings $\Phi_i$ are surjective readily implies the convergence in the \textit{pointed measured Gromov-Hausdorff sense} of the inverse system; c.f.~\cite[Proposition 2.17]{CK15}.
\begin{proposition}
A generalized diamond fractal $(F_\infty,d_\infty,\mu_\infty)$ is the inverse limit and the limit in the pointed measured Gromov-Hausdorff sense of $\{(F_i,d_i,\mu_i)\}_{i\geq 0}$.
\end{proposition}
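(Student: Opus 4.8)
The first assertion, that $(F_\infty,d_\infty,\mu_\infty)$ is the inverse limit of the system, holds essentially by construction: $F_\infty$ together with the projections $\Phi_i$ was defined as this inverse limit, and $d_\infty$ in~\eqref{E:def_dist} is precisely the metric carrying that structure. So the real content of the statement is the convergence in the pointed measured Gromov--Hausdorff sense. The plan is to exhibit the projections $\Phi_i\colon F_\infty\to F_i$ (together with measurable sections) as explicit approximate isometries whose distortion is controlled by a summable tail, and then to feed the surjectivity of $\Phi_i$ and the measure compatibility $(\Phi_i)_\ast\mu_\infty=\mu_i$ into the definition of pmGH convergence used in~\cite[Proposition~2.17]{CK15}.

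First I would quantify the approximate isometry. Writing $x_i:=\Phi_i(x)$ and using the compatibility $\Phi_{i-1}=\phi_i\circ\Phi_i$ of the inverse system, the left inequality in~\eqref{E:dist} shows that $i\mapsto d_i(x_i,y_i)$ is nondecreasing, while the right inequality bounds its increments by $2\pi/J_i$. Telescoping and passing to the limit via~\eqref{E:def_dist} then yields
\begin{equation*}
0\leq d_\infty(x,y)-d_i(\Phi_i(x),\Phi_i(y))\leq \sum_{k>i}\frac{2\pi}{J_k}=:\varepsilon_i
\end{equation*}
for all $x,y\in F_\infty$. Since $j_\ell\geq 2$ forces $J_k\geq 2^k$, the tail $\varepsilon_i$ is finite and tends to $0$, so each $\Phi_i$ is a uniform $\varepsilon_i$-isometry.

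Next I would handle density and measures. Because $\Phi_i$ is surjective, I can choose a section $s_i\colon F_i\to F_\infty$ with $\Phi_i\circ s_i=\id_{F_i}$; the displayed bound then makes $s_i$ an $\varepsilon_i$-isometric embedding whose image is $\varepsilon_i$-dense in $F_\infty$, since for any $z\in F_\infty$ the point $s_i(\Phi_i(z))$ satisfies $d_\infty(z,s_i(\Phi_i(z)))\leq\varepsilon_i$. For the measure part, the inverse-limit construction gives $(\Phi_i)_\ast\mu_\infty=\mu_i$, whence $(s_i)_\ast\mu_i=(s_i\circ\Phi_i)_\ast\mu_\infty$; as $s_i\circ\Phi_i$ displaces every point of $F_\infty$ by at most $\varepsilon_i$, this pushforward is $\varepsilon_i$-close to $\mu_\infty$ in the Prokhorov metric, so $(s_i)_\ast\mu_i\to\mu_\infty$ weakly. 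A base point common to all levels, e.g.\ the image in each $F_i$ of $0\in\mbbS^1$, is preserved exactly by $\Phi_i$, which settles the pointedness (and the compactness of the spaces makes this harmless in any case).

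The hard part is mostly bookkeeping rather than estimation: assembling the maps $\Phi_i$, the sections $s_i$, and the bound above into the precise formulation of pmGH convergence, i.e.\ realizing all spaces inside one ambient metric space via admissible couplings and checking that metric $\varepsilon_i$-closeness, $\varepsilon_i$-density, and weak convergence of the pushforward measures hold simultaneously. No single estimate is delicate; the care lies in matching conventions and in verifying that a measurable section $s_i$ can be chosen so that $(s_i)_\ast\mu_i$ behaves well, which is exactly where the explicit displacement bound $d_\infty(z,s_i(\Phi_i(z)))\le\varepsilon_i$ does the decisive work.
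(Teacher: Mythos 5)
Your proposal is correct and follows essentially the same route as the paper, which proves the proposition by observing that the two-sided bound~\eqref{E:dist} together with the surjectivity of the maps $\Phi_i$ yields the pointed measured Gromov--Hausdorff convergence (citing \cite[Proposition 2.17]{CK15}). You simply make explicit the quantitative content the paper leaves implicit, namely the tail bound $0\leq d_\infty(x,y)-d_i(\Phi_i(x),\Phi_i(y))\leq\sum_{k>i}2\pi/J_k\to 0$ and the measure compatibility $(\Phi_i)_\ast\mu_\infty=\mu_i$.
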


\subsection{Diffusion process and heat kernel}
Barlow and Evans proved in~\cite{BE04} the existence of a diffusion process associated with an inverse limit. Following a procedure from~\cite{BPY89}, it was proved in~\cite{AR18} that for a very large class of parameter sequences, it is actually possible to provide a rather explicit expression of the heat kernel associated with the natural diffusion on a diamond fractal; see~Theorem~\ref{T:HK_Finfty}. This paragraph summarizes the results obtained in~\cite{AR18}; their application in the analysis of the process and its heat kernel are the main object of study in the present paper. 

\medskip

In order to provide later on estimates that are expressible in a ``classical'' form, 
the paper parameter sequences $\mcN=\{n_i\}_{i\geq 0}$ and $\mcJ=\{j_i\}_{i\geq 0}$ under consideration will satisfy the following weak condition.

\begin{assumption}\label{A:NJ_ass2}
For any fixed $k\geq 0$, $\displaystyle \lim_{\ell\to\infty}n_\ell j_\ell e^{-J_{k+1,\ell-1}^2}=0$ 
and in particular the series
$
\sum\limits_{\ell=k}^\infty N_{k,\ell}J_{k,\ell}e^{-J_{k+1,\ell}^2}
$
converges and can be bounded uniformly on $k\geq 0$.
\end{assumption}

The latter assumption is readily satisfied for regular sequences, where $j_i=j$ and $n_i=n$ for some fixed $j,n\geq 2$, see e.g. Corollary~\ref{C:HKsup_regular}. The somewhat weaker condition
\begin{equation}\label{E:NJ_ass}
\lim_{i\to\infty}N_ie^{-J_i^2t}<\infty
\end{equation}
for $0<t<t_*<1$ 
already provides the existence of a jointly continuous heat kernel~\cite[Remark 3.1]{AR18} and general estimates in terms of series, see Remark~\ref{R:series_bound}. However, little information about the convergence of those series can be obtained without further assumptions. 

\medskip

Several results presented in the subsequent sections will involve the $L^2$-semigroup associated with the diffusion processes on $F_i$, $i=1,2,\ldots,\infty$, which we denote by $\{P_t^{F_i}\}_{t\geq 0}$. The next lemma states an intertwining property that relates these semigroups with the mappings 
\begin{align}
\Phi_i^*\colon &L^2(F_i,\mu_i)\;\longrightarrow \;L^2(F_\infty,\mu_\infty)\label{E:Phi_i_ast}\\
&\qquad f\quad\quad\longmapsto\quad f\circ \Phi_i,\nonumber
\end{align}
and will be applied crucially in several occasions. 

\begin{lemma}\cite[Lemma 3, Corollary 4]{AR18}\label{L:Intertwin}
The family of operators $\{P^{F_\infty}_t\}_{t\geq 0}$ is a strongly continuous Markov semigroup on $L^2(F_\infty,\mu_\infty)$ that satisfies the strong Feller property. Moreover, for any $i\geq 0$,
\begin{equation}\label{E:Intertwin}
P_{t}^{F_\infty}\Phi_{i}^*f=\Phi_i^*P_t^{F_i}f
\end{equation}
holds for any $f\in L^2(F_i,\mu_i)$.
\end{lemma}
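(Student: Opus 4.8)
The plan is to prove the statement in two parts, mirroring its two assertions. The first part---that $\{P^{F_\infty}_t\}_{t\geq 0}$ is a strongly continuous Markov semigroup satisfying the strong Feller property---is attributed to the inverse limit construction of Barlow and Evans~\cite{BE04} and its refinement in~\cite{AR18}. Since the lemma is explicitly cited as \cite[Lemma 3, Corollary 4]{AR18}, I would not reprove these abstract properties from scratch; rather, I would indicate how they follow from the general theory of Markov processes on inverse limits, where the semigroup on $F_\infty$ is assembled as a projective limit of the semigroups $P^{F_i}_t$ on the approximating graphs. The Markov and strong continuity properties descend from those on each finite level, and the strong Feller property uses the regularity of the heat kernels on the $F_i$ together with the continuity of the projections $\Phi_i$.

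The substantive part to verify is the intertwining relation~\eqref{E:Intertwin}. My approach would be to establish it first for functions of the special form $f=\Phi_i^*g$ with $g\in L^2(F_i,\mu_i)$, or equivalently to work directly with the pullback operators $\Phi_i^*$ defined in~\eqref{E:Phi_i_ast}. The key structural input is that the diffusion on $F_\infty$ is constructed so that its finite-dimensional projections onto $F_i$ coincide (in law) with the diffusion on $F_i$; concretely, $\Phi_i\colon F_\infty\to F_i$ intertwines the two processes. At the level of semigroups this means that for a bounded measurable $g$ on $F_i$,
\begin{equation*}
\bigl(P^{F_\infty}_t(g\circ\Phi_i)\bigr)(x)=\bigl(P^{F_i}_t g\bigr)(\Phi_i(x))\qquad\text{for }x\in F_\infty,
\end{equation*}
which is exactly $P^{F_\infty}_t\Phi_i^* g=\Phi_i^* P^{F_i}_t g$. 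I would derive this by expressing both sides through the transition kernels: the left side integrates $g\circ\Phi_i$ against the transition probability of the $F_\infty$-process started at $x$, and the compatibility of the kernels under the projection $\Phi_i$ (a consequence of the inverse-system construction and the consistency of the mappings $\phi_{ik}$ recorded in Lemma~\ref{L:dist}) turns this into the integral of $g$ against the pushed-forward kernel, which is the $F_i$-transition probability started at $\Phi_i(x)$.

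I expect the main obstacle to be the careful passage between the probabilistic statement (that $\Phi_i$ maps the $F_\infty$-diffusion to the $F_i$-diffusion) and the analytic identity~\eqref{E:Intertwin} on $L^2$. Two technical points deserve attention: first, ensuring the identity holds $\mu_\infty$-almost everywhere and then, by the strong Feller/continuity properties, everywhere; and second, extending from nice $g$ (bounded continuous, or the cylindrical functions that generate a dense subspace) to all $f\in L^2(F_\infty,\mu_\infty)$ by a density and continuity argument using that both $P^{F_\infty}_t$ and $\Phi_i^*$ are bounded operators. Since the statement is quoted verbatim from~\cite{AR18}, the cleanest route is to cite the kernel-compatibility established there and verify that it yields~\eqref{E:Intertwin} after testing against the dense family of functions of the form $\Phi_j^* g$; the rest is a routine approximation.
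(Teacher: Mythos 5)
The paper offers no proof of this lemma---it is quoted verbatim from \cite[Lemma 3, Corollary 4]{AR18}---and your proposal correctly recognizes this and sketches the same underlying mechanism (compatibility of the transition kernels under the projections $\Phi_i$ coming from the inverse-limit construction, followed by a density argument) that the cited reference relies on. One minor point: the intertwining identity only makes sense for $f\in L^2(F_i,\mu_i)$ (the quantifier over $L^2(F_\infty,\mu_\infty)$ in the statement is a typo), which is exactly the reading you adopt by working with $g\in L^2(F_i,\mu_i)$.
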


Each of these semigroups admits a jointly continuous heat kernel 
that turns out to be expressible in terms of the heat kernel on the circle and on intervals $[0,\pi/J_i]$ with Dirichlet boundary conditions, denoted by $p^{F_0}_t$, respectively $p_t^{[0,\pi/J_{i}]_D}$. Since these heat kernels appear repeatedly in later computations, some standard estimates and facts about them are recorded in Appendix~\ref{S:Apx_HKcircle}. 
\begin{theorem}\label{T:HK_Finfty}
The heat kernel associated with the semigroup $\{P_t^{F_i}\}_{t\geq 0}$ 
is given by
\begin{equation}\label{E:HK_general_form}
p_t^{F_\infty}(x,y)=p_t^{F_0}\big(\Phi_{0}(x),\Phi_{0}(y)\big)
+\sum_{\ell=1}^{i_{xy}}\delta_{xy}(n_\ell)N_{\ell-1}p_t^{[0,\pi/J_{\ell}]_D}(\Phi_0(x),\Phi_0(y))
\end{equation}
for any $x,y\in F_\infty$, where $i_{xy}:=\max_{i\geq 0}\{\Phi_{i}(x),\Phi_{i}(y)\text{ belong to the same bundle}\}$ and 
\begin{equation*}
\delta_{xy}(n)=\begin{cases}n-1&\text{if }\Phi_{i_{xy}}(x),\Phi_{i_{xy}}(y)\text{ same branch,}\\
 -1&\text{if }\Phi_{i_{xy}}(x),\Phi_{i_{xy}}(y)\text{ same bundle, different branch.}
 \end{cases}
\end{equation*}
\end{theorem}

\begin{proof}
The recursive formula provided in~\cite[Theorem 2]{AR18} can be rewritten as
\begin{equation*}
p_t^{F_i}(x,y)=p_t^{F_0}\big(\phi_{i0}(x),\phi_{i 0}(y)\big)+\sum_{\ell=1}^{i_{xy}}\delta_{xy}(n_\ell)N_{\ell-1}J_{\ell}\big(p_{J_\ell^2t}^{F_0}\big(\phi_{i0}(x),\phi_{i0}(y)\big){-}p_{J_\ell^2t}^{F_0}\big(\phi_{i0}(x),-\phi_{i0}(y)\big)\big),
\end{equation*}
where $i_{xy}:=\max_{0\leq k\leq i}\{\phi_{ik}(x),\phi_{ik}(y)\text{ belong to the same bundle}\}$ and $\delta^{(i)}_{xy}(n)$ is as $\delta_{xy}(n)$ with $\phi_{i,i_{xy}}$ instead of $\Phi_{i_{xy}}$.
Using the relation~\eqref{E:Dir_HKvsF0} to rewrite $p_{J_\ell^2t}^{F_0}$ in terms of the heat kernel on an interval and  
noting that by definition of inverse limit $\phi_{ik}(\Phi_i(x))=\Phi_k(x)$ for any $k\geq 0$,~\cite[Theorem 3]{AR18} gives~\eqref{E:HK_general_form}.
\end{proof}
At this point the reader may already wonder, whether these results could readily be extended to straightforwardly generalized constructions, such as considering different branch lengths and numbers within each approximation level. These could certainly be handled with the same techniques used in this paper. However, at the moment, 
setting up a formula including all possibilities 
seems to 
result in a fairly long and possibly painfully readable jungle of notation without providing greater insight. 

\section{Infinitesimal generator and Dirichlet form}\label{S:igDF}
As a strongly continuous Markov semigroup on $L^2(F_i,\mu_i)$, each $\{P^{F_i}_t\}_{t\geq 0}$, $i=1,\ldots,\infty$ has an associated infinitesimal generator and a Dirichlet form, which we denote by $L_{F_i}$ and $(\mcE^{F_i},\mcF^{F_i})$ respectively. In particular the semigroup and the Dirichlet form will appear in the functional inequalities discussed in the subsequent sections. The present section aims to record several properties of interest and to serve as motivation and the starting point for a forthcoming paper, where these questions will be analyzed in the more general setting of \textit{inverse limit spaces}.

\subsection{Liftings and projections}
The mappings that provided the intertwining relation between the semigroups $\{P^{F_\infty}_t\}_{t\geq 0}$ and $\{P^{F_i}_t\}_{t\geq 0}$ from Lemma~\eqref{L:Intertwin} will play a major role in the subsequent discussion. 
\begin{proposition}\label{P:Ana_props} 
Let $i\geq 0$ and $\Phi_i^*\colon L^2(F_i,\mu_i)\to L^2(F_\infty,\mu_\infty)$ be defined as in~\eqref{E:Phi_i_ast}.
\begin{enumerate}[wide=0em,itemsep=.5em,label={\rm (\roman*)}]
\item For each $i\geq 0$, $\Phi_i^*$ is an isometry.
\item The space $\mcC_0:=\bigcup_{i\geq 0}\Phi_i^*C(F_i)$ is dense in $L^2(F_\infty,\mu_\infty)$.
\end{enumerate}
\end{proposition}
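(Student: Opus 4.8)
The plan is to establish (i) through the change-of-variables formula for a pushforward measure, and (ii) by a Stone–Weierstrass argument followed by the density of continuous functions in $L^2$.

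For (i), the essential ingredient is the compatibility of the inverse-limit measure with the projections, namely $(\Phi_i)_*\mu_\infty=\mu_i$ for every $i\geq 0$, which is built into the Barlow–Evans construction of $(F_\infty,\mu_\infty)$ recalled from~\cite{BE04,AR18}. Granting this, for $f\in L^2(F_i,\mu_i)$ one computes
\[
\|\Phi_i^*f\|_{L^2(F_\infty,\mu_\infty)}^2=\int_{F_\infty}|f\circ\Phi_i|^2\,d\mu_\infty=\int_{F_i}|f|^2\,d\big((\Phi_i)_*\mu_\infty\big)=\int_{F_i}|f|^2\,d\mu_i=\|f\|_{L^2(F_i,\mu_i)}^2,
\]
where the middle equality is the change-of-variables formula for the pushforward. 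Since $\Phi_i^*$ is plainly linear, it is a linear isometry.

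For (ii), I would first record that each $\Phi_i$ is $1$-Lipschitz: iterating Lemma~\ref{L:dist} shows that $d_i(\Phi_i(x),\Phi_i(y))$ is nondecreasing in $i$ and bounded above by its limit, so $d_i(\Phi_i(x),\Phi_i(y))\leq d_\infty(x,y)$. Consequently $\Phi_i^*C(F_i)\subseteq C(F_\infty)$, whence $\mcC_0\subseteq C(F_\infty)$. Next I would check that $\mcC_0$ is a subalgebra of $C(F_\infty)$ containing the constants and separating points. The constants lie in $\Phi_0^*C(F_0)$. For the algebra property, given $\Phi_i^*f$ and $\Phi_j^*g$ with $i\leq j$, the inverse-system relation $\Phi_i=\phi_{ji}\circ\Phi_j$ lets me rewrite $\Phi_i^*f=\Phi_j^*(f\circ\phi_{ji})$, reducing both functions to the common level $j$; their sums and products then lie in $\Phi_j^*C(F_j)\subseteq\mcC_0$, since $C(F_j)$ is an algebra and $\phi_{ji}$ is continuous. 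For separation, if $x\neq y$ then $d_\infty(x,y)>0$, and because $d_\infty(x,y)=\lim_i d_i(\Phi_i(x),\Phi_i(y))$ is a nondecreasing limit there is some $i$ with $\Phi_i(x)\neq\Phi_i(y)$; choosing $f\in C(F_i)$ with $f(\Phi_i(x))\neq f(\Phi_i(y))$ produces $\Phi_i^*f\in\mcC_0$ separating $x$ and $y$. The Stone–Weierstrass theorem then gives that $\mcC_0$ is dense in $(C(F_\infty),\|\cdot\|_\infty)$. Finally, since $F_\infty$ is a compact metric space and $\mu_\infty$ a finite Borel measure, $C(F_\infty)$ is dense in $L^2(F_\infty,\mu_\infty)$, and the bound $\|\cdot\|_{L^2}\leq\mu_\infty(F_\infty)^{1/2}\|\cdot\|_\infty$ transfers uniform approximation into $L^2$ approximation; composing the two density statements yields the claim.

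The main obstacle is organizational rather than conceptual: on the one hand, confirming the pushforward identity $(\Phi_i)_*\mu_\infty=\mu_i$ directly from the inverse-limit construction for part (i); on the other, carefully verifying the algebra-closure of $\mcC_0$ in part (ii), where one must systematically lift functions defined at different levels to a common level through the $\phi_{ji}$ before multiplying. If one prefers to avoid the continuity discussion altogether, a measure-theoretic alternative is available: the Borel $\sigma$-algebra of $F_\infty$ is generated by the algebra of cylinder sets $\bigcup_{i\geq 0}\Phi_i^{-1}(\mcB(F_i))$, and the indicators of such sets, which are exactly the functions $\Phi_i^*\mathbf{1}_A$, have dense linear span in $L^2(F_\infty,\mu_\infty)$ by a standard monotone-class approximation.
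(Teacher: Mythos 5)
Your proof is correct, but it is worth noting that the paper itself does not argue the proposition at all: it simply cites~\cite[Proposition 2]{AR18}, so your write-up is a genuinely self-contained substitute rather than a reproduction of the paper's route. For (i) you use exactly the fact that makes the statement true, namely the compatibility $(\Phi_i)_*\mu_\infty=\mu_i$ coming from the inverse-limit construction of the measure; this is the same mechanism the cited reference relies on. For (ii) your Stone--Weierstrass argument is arguably stronger than the purely measure-theoretic monotone-class alternative you sketch at the end: it shows that $\mcC_0$ is dense in $(C(F_\infty),\|\cdot\|_\infty)$, not merely in $L^2$, and that uniform density is implicitly used later in the paper (e.g.\ in the regularity claim of Theorem~\ref{T:DF_props}, where one needs $\mcC_0\subseteq C(F_\infty)\cap\mcF^{F_\infty}$ to be a core in the Dirichlet-form sense). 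Two small points you should make explicit if you keep this version: the bonding maps $\phi_{ji}$ are introduced in the paper only as measurable, so in the algebra-closure step you should note that Lemma~\ref{L:dist} makes them $1$-Lipschitz, hence continuous, which is what justifies $f\circ\phi_{ji}\in C(F_j)$; and the point-separation step tacitly uses that $d_\infty$ is a genuine metric on $F_\infty$ (equivalently, that distinct points of the inverse limit differ in some coordinate), which is true by definition of the inverse limit but deserves a word. With those remarks added, the argument is complete and correct.
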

\begin{proof}
These statements follow by the definition of $\Phi_i^*$, see e.g.~\cite[Proposition 2]{AR18}.
\end{proof}
\begin{definition}\label{D:Proj_i}
For each $i\geq 0$, let $\Pi_i\colon L^2(F_\infty,\mu_\infty)\to L^2(F_i,\mu_i)$ be the left inverse of $\Phi_i^*$, i.e.
\begin{enumerate}[leftmargin=.5em,label={\rm(\alph*)}, wide=0em, itemsep=.5em]
\item\label{D:Left_inv_a} $\displaystyle \langle \Pi_i f,f_i\rangle_{L^2(F_i,\mu_i)}=\langle f,\Phi_i^* f_i\rangle_{L^2(F_\infty,\mu_\infty)}$ for any $f\in L^2(F_\infty,\mu_\infty),\;f_i\in L^2(F_i,\mu_i)$;
\item\label{D:Left_inv_b} $\displaystyle \Pi_i\Phi_i^*f_i=f_i$ for any $f_i\in L^2(F_i,\mu_i)$.
\end{enumerate} 
\end{definition}

While $\Phi_i^*$ may be understood as a ``lifting'', its left inverse $\Pi_i$ is in fact a projection mapping. In the next lemma, we prove some of its most relevant properties.
\begin{proposition}\label{P:Pi_i_props}
For any $i\geq 0$ and $f\in L^2(F_\infty,\mu_\infty)$, 
\begin{enumerate}[wide=0em,itemsep=.5em,label={\rm (\roman*)}]
\item $\displaystyle \|\Pi_i f\|_{L^2(F_i,\mu_i)}\leq\|f\|_{L^2(F_\infty,\mu_\infty)}$;
\item\label{E:L2sps_conv} $\displaystyle \|f\|_{L^2(F_\infty,\mu_\infty)}=\lim_{i\to\infty}\|\Pi_i f\|^2_{L^2(F_i,\mu_i)}$.
\end{enumerate}
\end{proposition}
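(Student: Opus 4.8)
The plan is to observe that the defining property \ref{D:Left_inv_a} says precisely that $\Pi_i$ is the Hilbert-space adjoint $(\Phi_i^*)^*$ of the isometry $\Phi_i^*$, and to run both parts through the standard calculus of adjoints of isometries. For \emph{(i)}, I would simply compute, using \ref{D:Left_inv_a} with $f_i=\Pi_i f$, that
\[
\|\Pi_i f\|_{L^2(F_i,\mu_i)}^2=\langle \Pi_i f,\Pi_i f\rangle_{L^2(F_i,\mu_i)}=\langle f,\Phi_i^*\Pi_i f\rangle_{L^2(F_\infty,\mu_\infty)}\leq \|f\|_{L^2(F_\infty,\mu_\infty)}\,\|\Phi_i^*\Pi_i f\|_{L^2(F_\infty,\mu_\infty)},
\]
by Cauchy--Schwarz, and then use that $\Phi_i^*$ is an isometry (Proposition~\ref{P:Ana_props}) to replace $\|\Phi_i^*\Pi_i f\|_{L^2(F_\infty,\mu_\infty)}$ by $\|\Pi_i f\|_{L^2(F_i,\mu_i)}$. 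Dividing out one factor of $\|\Pi_i f\|$ gives the contraction bound; this part is routine.

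For \emph{(ii)} I would introduce the operator $P_i:=\Phi_i^*\Pi_i$ on $L^2(F_\infty,\mu_\infty)$. From \ref{D:Left_inv_b} (so that $\Pi_i\Phi_i^*=\id$) one gets $P_i^2=\Phi_i^*(\Pi_i\Phi_i^*)\Pi_i=P_i$, and from \ref{D:Left_inv_a} (so that $\Pi_i=(\Phi_i^*)^*$) one gets $P_i^*=\Pi_i^*(\Phi_i^*)^*=\Phi_i^*\Pi_i=P_i$; hence $P_i$ is the orthogonal projection onto the closed subspace $V_i:=\Phi_i^*L^2(F_i,\mu_i)$. Because $\Phi_i^*$ is isometric, $\|\Pi_i f\|_{L^2(F_i,\mu_i)}^2=\|\Phi_i^*\Pi_i f\|_{L^2(F_\infty,\mu_\infty)}^2=\|P_i f\|_{L^2(F_\infty,\mu_\infty)}^2$, so the claimed identity reduces to showing $P_i f\to f$ in $L^2(F_\infty,\mu_\infty)$ and taking squared norms.

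The structural input needed is that the subspaces $V_i$ are \emph{nested increasing}. I would derive this from the compatibility $\Phi_i=\phi_{(i+1)i}\circ\Phi_{i+1}$ (a consequence of the inverse-limit relation $\phi_{ik}\circ\Phi_i=\Phi_k$ recalled before Corollary~\ref{C:Closed_HKformula}) together with the measure-preservation $(\phi_{(i+1)i})_*\mu_{i+1}=\mu_i$, which itself follows from $(\Phi_i)_*\mu_\infty=\mu_i$ (the fact underlying the isometry property). Indeed, for $f_i\in L^2(F_i,\mu_i)$ one has $\Phi_i^*f_i=f_i\circ\phi_{(i+1)i}\circ\Phi_{i+1}=\Phi_{i+1}^*\big(f_i\circ\phi_{(i+1)i}\big)$, where $f_i\circ\phi_{(i+1)i}\in L^2(F_{i+1},\mu_{i+1})$ by measure-preservation, so $V_i\subseteq V_{i+1}$. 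Then $\{P_i\}$ is an increasing sequence of orthogonal projections, which converges strongly to the projection onto $\overline{\bigcup_{i\geq 0}V_i}$; since $\bigcup_{i\geq 0}V_i\supseteq\mcC_0$ and $\mcC_0$ is dense by Proposition~\ref{P:Ana_props}, this closure is all of $L^2(F_\infty,\mu_\infty)$. Hence $P_i f\to f$, giving $\|\Pi_i f\|^2_{L^2(F_i,\mu_i)}=\|P_i f\|^2\to\|f\|^2_{L^2(F_\infty,\mu_\infty)}$. Alternatively, one can avoid quoting the monotone-convergence-of-projections result and argue by an $\varepsilon$/density estimate: for $g=\Phi_j^*h\in\mcC_0$ and $i\geq j$ nesting gives $P_i g=g$, whence $\|P_i f-f\|\leq 2\|f-g\|$.

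The only genuinely nontrivial step is establishing the nesting $V_i\subseteq V_{i+1}$, i.e.\ checking the factorization of the projections together with the measure-preservation of the $\phi_{(i+1)i}$; once this is in place, both the self-adjoint/idempotent computation for $P_i$ and the passage to the limit are Hilbert-space boilerplate. I would therefore devote the care of the write-up to justifying that $f_i\circ\phi_{(i+1)i}$ lands in $L^2(F_{i+1},\mu_{i+1})$ with preserved norm, referring back to the inverse-system structure from~\cite{AR18}.
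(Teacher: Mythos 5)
Your proof is correct. Part (i) is verbatim the paper's argument: the adjoint identity from \ref{D:Left_inv_a} with $f_i=\Pi_if$, Cauchy--Schwarz, and the isometry of $\Phi_i^*$. For part (ii) you take a genuinely different, more structural route. The paper argues quantitatively: it bounds $\big|\|f\|^2-\|\Pi_if\|^2\big|=|\langle f,f-\Phi_i^*\Pi_if\rangle|\leq\|f\|\,\|f-\Phi_i^*\Pi_if\|$ and then, picking $f_i\in C(F_i)$ with $\Phi_i^*f_i\to f$ (density of $\mcC_0$), controls $\|f-\Phi_i^*\Pi_if\|\leq 2\,\|f-\Phi_i^*f_i\|$ via the triangle inequality, the isometry, property \ref{D:Left_inv_b} and part (i). You instead identify $P_i=\Phi_i^*\Pi_i=\Phi_i^*(\Phi_i^*)^*$ as the orthogonal projection onto $V_i=\Phi_i^*L^2(F_i,\mu_i)$, prove the nesting $V_i\subseteq V_{i+1}$ from $\Phi_i=\phi_{(i+1)i}\circ\Phi_{i+1}$ and measure-preservation, and conclude $P_if\to f$ since $\bigcup_iV_i\supseteq\mcC_0$ is dense. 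The two approaches buy different things: the paper's is shorter and avoids naming the projections, but it silently assumes one can choose a \emph{sequence} $f_i\in C(F_i)$ indexed by every $i$ with $\Phi_i^*f_i\to f$, which in fact requires exactly the nesting you prove (one must push an approximant $h\in C(F_j)$ forward to all levels $i\geq j$ via $h\circ\phi_{ij}$). Your version makes that structural input explicit and, as a bonus, exhibits $\Phi_i^*\Pi_i$ as an increasing family of orthogonal projections, which is also the cleanest way to see Lemma~\ref{L:Sg_strong_conv}. Both are valid proofs of the stated limit.
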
 

\begin{proof}
\begin{enumerate}[leftmargin=.5em,label=(\roman*), wide=0em, itemsep=.5em]
\item Since $\Pi_i$ is the left inverse of $\Phi_i^*$, applying Cauchy-Schwartz we have 
\begin{align*}
\|\Pi_i f\|_{L^2(F_i,\mu_i)}^2&=\langle \Pi_i f,\Pi_i f\rangle_{L^2(F_i,\mu_i)}=\langle f,\Phi_i^*\Pi_if\rangle_{L^2(F_\infty,\mu_\infty)}\\
&\leq \|f\|_{L^2(F_\infty,\mu_\infty)}\|\Phi_i^*\Pi_i f\|_{L^2(F_\infty,\mu_\infty)}=\|f\|_{L^2(F_\infty,\mu_\infty)}\|\Pi_i f\|_{L^2(F_\infty,\mu_\infty)}.
\end{align*}
\item Again by Cauchy-Schwartz,
\begin{align*}
\big|\,\|f\|^2_{L^2(F_\infty,\mu_\infty)}-\|\Pi_i f\|^2_{L^2(F_i,\mu_i)}\big|
&=\big|\langle f,f-\Phi_i^*\Pi_i f\rangle_{L^2(F_\infty,\mu_\infty)}\big|\\
&\leq\|f\|_{L^2(F_\infty,\mu_\infty)}\|f-\Phi_i^*\Pi_i f\|_{L^2(F_\infty,\mu_\infty)}.
\end{align*}
Recall from Proposition~\ref{P:Ana_props} that $\mcC_0$ is dense in $L^2(F_\infty,\mu_\infty)$. Let thus $\{f_i\}_{i\geq 0}$ be a sequence such that $f_i\in C(F_i)$ and $\Phi_i^* f_i\xrightarrow{i\to\infty}f$ in $L^2$. Then, using the triangle inequality, the isometry property of $\Phi_i^*$, the definition of left inverse and part (i) of the present proof we get
\begin{align}
\|f-\Phi_i^*\Pi_i f\|_{L^2(F_\infty,\mu_\infty)}&\leq \|f-\Phi_i^* f_i\|_{L^2(F_\infty,\mu_\infty)}+\|\Phi_i^*(f_i-\Pi_i f)\|_{L^2(F_\infty,\mu_\infty)}\nonumber\\
&\leq\|f-\Phi_i^* f_i\|_{L^2(F_\infty,\mu_\infty)}+\|\Phi_i^*f_i-f\|_{L^2(F_\infty,\mu_\infty)}\label{E:Pi_i_props_e1}
\end{align}
which vanishes as $i\to\infty$.
\end{enumerate}
\end{proof}
In particular,~\ref{E:L2sps_conv} in the latter proposition implies the convergence of the $L^2$-spaces $L^2(F_i,\mu_i)$ as introduced in~\cite[Definition 2.1]{Kol06}, see also~\cite{KS03}: a sequence of Hilbert spaces $\{H_i\}_{i\geq 0}$ is said to converge to another Hilbert space $H$ if there exists a dense subspace $C\subset H$ and a sequence of operators $A_i\colon C\to H_i$ such that 
\begin{equation}
 \|f\|_{H}=\lim_{i\to\infty}\|A_i f\|^2_{H_i}
\end{equation}
for every $f\in C$. In the present setting, convergence follows by choosing $H=L^2(F_\infty,\mu_\infty)$, $H_i=L^2(F_i,\mu_i)$, $C=\mcC_0$ and $A_i=\Pi_i$.

\begin{corollary}\label{C:L2sps_conv}
The sequence of spaces $\{L^2(F_i,\mu_i)\}_{i\geq 0}$ converges to $L^2(F_\infty,\mu_\infty)$. 
\end{corollary}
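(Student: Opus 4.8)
The plan is to verify directly that the quadruple $H=L^2(F_\infty,\mu_\infty)$, $H_i=L^2(F_i,\mu_i)$, the subspace $C=\mcC_0$, and the operators $A_i=\Pi_i$ satisfy the two defining requirements for convergence of Hilbert spaces in the sense of~\cite{Kol06}: namely, that $\mcC_0$ is a dense subspace of $L^2(F_\infty,\mu_\infty)$, and that the identity $\|f\|_{H}=\lim_{i\to\infty}\|A_if\|^2_{H_i}$ holds for every $f\in\mcC_0$. All of the analytic content has already been packaged into Proposition~\ref{P:Pi_i_props}, so the corollary amounts to matching that statement against the definition recalled just above it.

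First I would record that $\mcC_0=\bigcup_{i\geq 0}\Phi_i^*C(F_i)$ is dense in $L^2(F_\infty,\mu_\infty)$, which is precisely Proposition~\ref{P:Ana_props}(ii), and that it is genuinely a linear subspace. The latter follows from the fact that the union is nested: since the bonding maps $\phi_{ji}$ are $1$-Lipschitz and hence continuous by Lemma~\ref{L:dist}, the inverse-limit compatibility $\phi_{ji}\circ\Phi_j=\Phi_i$ gives, for $i\leq j$ and $f\in C(F_i)$,
\[
\Phi_i^*f=f\circ\Phi_i=(f\circ\phi_{ji})\circ\Phi_j=\Phi_j^*(f\circ\phi_{ji}),
\]
with $f\circ\phi_{ji}\in C(F_j)$, so that $\Phi_i^*C(F_i)\subseteq\Phi_j^*C(F_j)$. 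An increasing union of subspaces is a subspace, so $\mcC_0$ qualifies. The operators $\Pi_i$ are defined on all of $L^2(F_\infty,\mu_\infty)$ in Definition~\ref{D:Proj_i}, hence in particular restrict to $\mcC_0$.

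It then remains only to supply the norm-convergence identity, and here the key observation is that Proposition~\ref{P:Pi_i_props}\ref{E:L2sps_conv} already delivers
\[
\|f\|_{L^2(F_\infty,\mu_\infty)}=\lim_{i\to\infty}\|\Pi_if\|^2_{L^2(F_i,\mu_i)}
\]
for \emph{every} $f\in L^2(F_\infty,\mu_\infty)$, which is strictly stronger than the definition demands, as the latter only asks for the identity on the dense subspace $\mcC_0$. Restricting this identity to $f\in\mcC_0$ furnishes the last hypothesis, and the convergence $L^2(F_i,\mu_i)\to L^2(F_\infty,\mu_\infty)$ follows by definition. I do not expect any genuine obstacle: the only points requiring a line of comment are the verification that $\mcC_0$ is a subspace (handled by the nestedness argument above) and the observation that the whole-space identity of Proposition~\ref{P:Pi_i_props} survives restriction to $\mcC_0$; both are routine.
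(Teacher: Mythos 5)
Your proposal is correct and follows essentially the same route as the paper, which likewise verifies the definition from \cite[Definition 2.1]{Kol06} by taking $C=\mcC_0$ and $A_i=\Pi_i$ and invoking Proposition~\ref{P:Ana_props}(ii) for density together with Proposition~\ref{P:Pi_i_props}\ref{E:L2sps_conv} for the norm identity. The only addition is your (correct, routine) check that $\mcC_0$ is a nested union of subspaces, a point the paper leaves implicit.
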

We finish this paragraph by analyzing the combined action of the lifting $\Phi_i^*$, the semigroup $\{P^{F_i}_t\}_{t\geq 0}$ and the projection $\Pi_i$ through the operator $\Phi_i^*P^{F_i}_t\Pi_i\colon L^2(F_\infty,\mu_\infty)\to L^2(F_\infty,\mu_\infty)$.
This will be useful later, in particular to derive the Mosco convergence of the associated Dirichlet forms.

\begin{lemma}\label{L:Sg_strong_conv}
For any $t\geq 0$, the sequence of bounded operators $\{\Phi_i^*P^{F_i}_t\Pi_i\}_{i\geq 0}$ converges strongly  in $L^2(F_\infty,\mu_\infty)$ to $P^{F_\infty}_t$. In particular, the convergence is uniform in any finite time interval.
\end{lemma}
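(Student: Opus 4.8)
The plan is to combine a uniform operator bound with \emph{exact} agreement on a dense subspace, via the standard three-term splitting. Writing $T_i:=\Phi_i^*P^{F_i}_t\Pi_i$, I would first record that each $T_i$ is an $L^2$-contraction, uniformly in $i$ and $t$: indeed $\Phi_i^*$ is an isometry (Proposition~\ref{P:Ana_props}), each $P^{F_i}_t$ is a Markov semigroup and hence an $L^2(F_i,\mu_i)$-contraction, and $\|\Pi_i f\|_{L^2(F_i,\mu_i)}\le\|f\|_{L^2(F_\infty,\mu_\infty)}$ by Proposition~\ref{P:Pi_i_props}; thus $\|T_i\|\le 1$ for all $i$ and all $t\ge 0$, and likewise $\|P^{F_\infty}_t\|\le 1$ by Lemma~\ref{L:Intertwin}.

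The heart of the argument is to show that on $\mcC_0=\bigcup_{k\ge 0}\Phi_k^* C(F_k)$ the operators $T_i$ do not merely approximate $P^{F_\infty}_t$ but eventually equal it. Fix $g=\Phi_k^* f_k\in\mcC_0$ with $f_k\in C(F_k)$. The key bookkeeping step is the compatibility $\phi_{ik}\circ\Phi_i=\Phi_k$ already used in Corollary~\ref{C:Closed_HKformula}, which for $i\ge k$ rewrites $g$ as an element of the range of $\Phi_i^*$, namely $g=\Phi_k^*f_k=\Phi_i^*(f_k\circ\phi_{ik})$ with $f_k\circ\phi_{ik}\in L^2(F_i,\mu_i)$. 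The left-inverse property $\Pi_i\Phi_i^*=\id$ (Definition~\ref{D:Proj_i}\ref{D:Left_inv_b}) then collapses the projection, $\Pi_i g=f_k\circ\phi_{ik}$, and the intertwining relation~\eqref{E:Intertwin} applied at level $i$ to the function $f_k\circ\phi_{ik}$ yields
\begin{equation*}
T_i g=\Phi_i^*P^{F_i}_t(f_k\circ\phi_{ik})=P^{F_\infty}_t\Phi_i^*(f_k\circ\phi_{ik})=P^{F_\infty}_t\Phi_k^*f_k=P^{F_\infty}_t g
\end{equation*}
for every $i\ge k$, with no error term.

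With these two ingredients the conclusion is immediate. Given $f\in L^2(F_\infty,\mu_\infty)$ and $\varepsilon>0$, I would pick $g\in\mcC_0$ at some level $k$ with $\|f-g\|<\varepsilon$ and estimate, for $i\ge k$,
\begin{equation*}
\|T_if-P^{F_\infty}_tf\|\le\|T_i(f-g)\|+\|T_ig-P^{F_\infty}_tg\|+\|P^{F_\infty}_t(g-f)\|\le\varepsilon+0+\varepsilon,
\end{equation*}
using the uniform contraction bounds on the outer terms and the exact identity on the middle one. This proves strong convergence $T_i\to P^{F_\infty}_t$. Moreover the threshold $k$ depends only on $f$ and $\varepsilon$, not on $t$, so the estimate holds simultaneously for all $t\ge 0$; in particular the convergence is uniform on every finite time interval, as claimed (indeed on $[0,\infty)$).

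I do not anticipate a real obstacle, since the nontrivial content is structural rather than computational. The one place demanding care is getting the compatibility identities exactly right — that $\Phi_k^*=\Phi_i^*(\,\cdot\circ\phi_{ik})$, that $\Pi_i$ genuinely annihilates the lift, and that~\eqref{E:Intertwin} is invoked for the finite-level function $f_k\circ\phi_{ik}\in L^2(F_i,\mu_i)$ rather than for $g$ itself. It is this exact (not approximate) agreement on $\mcC_0$ that makes the density step free and, as a bonus, upgrades the time-uniformity from compact intervals to all of $[0,\infty)$ without any equicontinuity-in-$t$ input.
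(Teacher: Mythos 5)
Your proof is correct and rests on the same two pillars as the paper's: the intertwining relation~\eqref{E:Intertwin} and the contraction/isometry properties of $P_t^{F_i}$, $\Phi_i^*$ and $\Pi_i$. The paper's version is slightly more direct --- it applies the intertwining to $\Pi_i f$ for an arbitrary $f\in L^2(F_\infty,\mu_\infty)$ and reduces everything to $\|\Phi_i^*\Pi_i f-f\|_{L^2(F_\infty,\mu_\infty)}\to 0$ (which is~\eqref{E:Pi_i_props_e1}), whereas you first verify exact agreement of $\Phi_i^*P_t^{F_i}\Pi_i$ with $P_t^{F_\infty}$ on $\mcC_0$ and then run a three-term estimate; both routes ultimately invoke the same density of liftings, so this is essentially the paper's argument.
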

\begin{proof}
By virtue of Lemma~\ref{L:Intertwin} and the contraction property of the semigroup $P^{F_\infty}_t$, for any $f\in L^2(F_\infty,\mu_\infty)$ we have
\begin{equation*}
\|\Phi_i^*P^{F_i}_t\Pi_if-P^{F_\infty}_tf\|_{L^2(F_\infty,\mu_\infty)}=\|P^{F_\infty}_t\Phi_i^*\Pi_if-P^{F_\infty}_tf\|_{L^2(F_\infty,\mu_\infty)}\leq \|\Phi_i^*\Pi_if-f\|_{L^2(F_\infty,\mu_\infty)}.
\end{equation*}
The latter tends to zero as $i\to\infty$ in view of~\eqref{E:Pi_i_props_e1}. The convergence is independent of $t\geq 0$ and hence uniform on any finite interval.
\end{proof}
\subsection{Infinitesimal generator}
Since the finite approximations $F_i$ are metric graphs, for finite indexes $i\geq 0$ the operator $L_{F_i}$ with domain $\mcD_{F_i}$ corresponds with the standard Laplacian studied in quantum graphs/cable systems; see e.g.~\cite{BB04,BK13}. In this paragraph we focus on the properties of the generator $L_{F_\infty}$ and its domain $\mcD_{F_\infty}$ that can be obtained from the results in the previous paragraph. 

\medskip

Finding an explicit characterization of the domain is usually a delicate and difficult question. 
Luckily, for many purposes it will be enough to have a suitable dense set of functions at hand. The next theorem 
provides a natural \textit{core} of functions for $L_{F_\infty}$, i.e.\ a subspace $\mcD_0$ of the domain $\mcD_{F_\infty}$ with the property that the closure of the restriction $L_{F_\infty}|_{\mcD_0}$ coincides with $L_{F_\infty}$.

\begin{theorem}\label{T:core_ig}
For each $i\geq 0$, let $\mcD_{F_i}$ denote the domain of infinitesimal generator $L_{F_i}$. 
The space $\mcD_0:=\bigcup_{i\geq 1}\Phi_i^*\mcD_{F_i}$ 
is a core for the infinitesimal generator $(L_{F_\infty},\mcD_{F_\infty})$.
\end{theorem}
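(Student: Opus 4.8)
The plan is to show that $\mcD_0$ is a core by combining the characterization of a core through the semigroup with the strong convergence already established in Lemma~\ref{L:Sg_strong_conv}. Recall that a subspace $\mcD_0 \subseteq \mcD_{F_\infty}$ is a core if and only if it is dense in $\mcD_{F_\infty}$ with respect to the graph norm $\|f\|_{\mcD_{F_\infty}}^2 = \|f\|_{L^2}^2 + \|L_{F_\infty} f\|_{L^2}^2$, or equivalently if $\mcD_0$ is dense in $L^2(F_\infty,\mu_\infty)$ and invariant under the semigroup $\{P^{F_\infty}_t\}_{t\geq 0}$. I would organize the argument around the latter criterion, which is a standard consequence of the Hille-Yosida theory (see e.g.\ Reed-Simon or Ethier-Kurtz): a dense, semigroup-invariant subspace of the domain is automatically a core.

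First I would verify that $\mcD_0 \subseteq \mcD_{F_\infty}$ and that it is dense in $L^2(F_\infty,\mu_\infty)$. Density is almost immediate: since each $\mcD_{F_i}$ is a core for $L_{F_i}$ it is dense in $L^2(F_i,\mu_i)$, and hence $\Phi_i^* \mcD_{F_i}$ is dense in $\Phi_i^* C(F_i)$ by the isometry property of $\Phi_i^*$ (Proposition~\ref{P:Ana_props}(i)); taking the union over $i$ and invoking Proposition~\ref{P:Ana_props}(ii), which states that $\mcC_0 = \bigcup_{i\geq 0}\Phi_i^* C(F_i)$ is dense in $L^2(F_\infty,\mu_\infty)$, gives density of $\mcD_0$. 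To see that $\Phi_i^*\mcD_{F_i} \subseteq \mcD_{F_\infty}$, I would differentiate the intertwining relation~\eqref{E:Intertwin}: for $f \in \mcD_{F_i}$ the curve $t \mapsto P^{F_\infty}_t \Phi_i^* f = \Phi_i^* P^{F_i}_t f$ is differentiable at $t=0$ in $L^2$ (because $\Phi_i^*$ is a bounded isometry and $P^{F_i}_t f$ is differentiable at $t=0$ with derivative $L_{F_i}f$), so $\Phi_i^* f \in \mcD_{F_\infty}$ and moreover
\begin{equation}\label{E:gen_intertwin}
L_{F_\infty}\Phi_i^* f = \Phi_i^* L_{F_i} f.
\end{equation}

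The key step is semigroup-invariance of $\mcD_0$. Here I would use~\eqref{E:Intertwin} directly: for $f \in \mcD_{F_i}$ and any $t \geq 0$,
\begin{equation*}
P^{F_\infty}_t \Phi_i^* f = \Phi_i^* P^{F_i}_t f,
\end{equation*}
and since $\{P^{F_i}_t\}_{t\geq 0}$ maps $\mcD_{F_i}$ into itself (the domain of a generator is always invariant under its own semigroup), we have $P^{F_i}_t f \in \mcD_{F_i}$, whence $P^{F_\infty}_t \Phi_i^* f \in \Phi_i^*\mcD_{F_i} \subseteq \mcD_0$. Thus $\mcD_0$ is invariant. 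Having established that $\mcD_0$ is a dense subspace of $\mcD_{F_\infty}$ that is invariant under the strongly continuous semigroup $\{P^{F_\infty}_t\}_{t\geq 0}$ (which is strongly continuous by Lemma~\ref{L:Intertwin}), the standard core criterion concludes that $\mcD_0$ is a core for $(L_{F_\infty},\mcD_{F_\infty})$.

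I expect the main obstacle to be the careful verification of~\eqref{E:gen_intertwin}, i.e.\ that $\Phi_i^*$ genuinely maps the generator domain into the generator domain and intertwines the generators, rather than merely the semigroups. This requires justifying the interchange of $\Phi_i^*$ with the $L^2$-limit defining the generator; the isometry and boundedness of $\Phi_i^*$ make this routine, but it is the point where the inverse-limit structure is essential. An alternative, cleaner route that avoids differentiating would be to appeal to the abstract fact that semigroup-invariance plus density suffices, thereby sidestepping any explicit computation of $L_{F_\infty}$ on $\mcD_0$; I would present that route as the primary argument and record~\eqref{E:gen_intertwin} as a byproduct useful for later sections.
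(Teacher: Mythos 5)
Your proposal is correct and follows essentially the same route as the paper: establish density of $\mcD_0$ in $L^2(F_\infty,\mu_\infty)$ via Proposition~\ref{P:Ana_props}, prove semigroup-invariance of $\mcD_0$ from the intertwining relation~\eqref{E:Intertwin}, and then invoke the standard criterion (a dense, semigroup-invariant subspace of the domain is a core, cited in the paper as \cite[Proposition 3.3]{EK86}). Your additional explicit verification that $\Phi_i^*\mcD_{F_i}\subseteq\mcD_{F_\infty}$ with $L_{F_\infty}\Phi_i^*f=\Phi_i^*L_{F_i}f$ is a worthwhile detail the paper leaves implicit, but it does not change the argument.
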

\begin{proof}
Let $f\in \mcD_0$. Then, $f=\Phi^*_ih$ for some $h\in \mcD_{F_i}$ and $i\geq 0$. By Lemma~\ref{L:Intertwin},
\begin{equation}
P^{F_\infty}_tf=P^{F_\infty}_t\Phi^*_ih=\Phi_i^*P_t^{F_i}h\in \Phi_i^*\mcD_{F_i}\subseteq\mcD_0.
\end{equation}
We have thus proved that $P^{F_\infty}_t\colon\mcD_0\to\mcD_0$. Furthermore, $C^\infty(F_i)$ is dense in $C(F_i)$ as well as in $\mcD_{F_i}$ for each $i\geq 1$, 
so 
Proposition~\ref{P:Ana_props} implies that 
$\bigcup_{i\geq 1}\Phi_i^*C^\infty(F_i)$, hence
$\mcD_0$, is dense in $L^2(F_\infty,\mu_\infty)$. By virtue of~\cite[Section 1, Proposition 3.3]{EK86} $\mcD_0$ is a core for the infinitesimal generator of $P^{F_\infty}_t$.
\end{proof}
In other words, any function $f\in\mcD_{F_\infty}$ can be approximated by a sequence $\{f_i\}_{i\geq 0}\subseteq\mcD_0$ so that $\|f-f_i\|_{L^2(F_\infty,\mu_\infty)}\to 0$ and $\|L_{F_\infty} f-L_{F_\infty}f_i\|_{L^2(F_\infty,\mu_\infty)}\to 0$ as $i\to \infty$.

\medskip

We can now use the previous paragraph to describe the relation between the lifting and projection maps and the inifinitesimal generator. 

\begin{corollary}
For each $f\in\mcD_0$, there exists $\{f_i\}_{i\geq 0}$ with $f_i\in \mcD_{F_i}$ such that 
\begin{equation*}
\Phi^*_if_i\xrightarrow{i\to\infty}f\qquad\text{and}\qquad\Phi^*_iL_{F_i}f_i\xrightarrow{i\to\infty}L_{F_\infty}f
\end{equation*}
hold in $L^2(F_\infty,\mu_\infty)$.
\end{corollary}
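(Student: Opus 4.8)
The plan is to deduce this corollary directly from Theorem~\ref{T:core_ig} and its proof, since the statement is essentially a reformulation of what it means for $\mcD_0$ to be a core, made concrete through the lifting maps $\Phi_i^*$. Given $f\in\mcD_0$, the definition $\mcD_0=\bigcup_{i\geq 1}\Phi_i^*\mcD_{F_i}$ tells us that $f=\Phi_{i_0}^*h$ for some fixed index $i_0$ and some $h\in\mcD_{F_{i_0}}$. The key observation is that the desired approximating sequence can be taken to be \emph{eventually constant}: for all $i\geq i_0$ one can exhibit elements $f_i\in\mcD_{F_i}$ whose liftings $\Phi_i^*f_i$ equal $f$ exactly, so that both convergences hold trivially (indeed with zero error past $i_0$).

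First I would make the construction explicit. For $i\geq i_0$, the compatibility of the inverse system gives $\Phi_{i_0}=\phi_{i i_0}\circ\Phi_i$, hence $\Phi_{i_0}^*h=\Phi_i^*(\phi_{i i_0}^*h)$, where $\phi_{i i_0}^*h:=h\circ\phi_{i i_0}$. Setting $f_i:=\phi_{i i_0}^*h=h\circ\phi_{i i_0}\in C(F_i)$, I would check that $f_i\in\mcD_{F_i}$: this follows because the projection maps intertwine the semigroups at finite levels in the same way as~\eqref{E:Intertwin}, so a function of the form $h\circ\phi_{i i_0}$ lies in the domain of $L_{F_i}$ whenever $h\in\mcD_{F_{i_0}}$, and moreover $L_{F_i}(h\circ\phi_{i i_0})=(L_{F_{i_0}}h)\circ\phi_{i i_0}$. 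Granting this, we have $\Phi_i^*f_i=\Phi_i^*\phi_{i i_0}^*h=\Phi_{i_0}^*h=f$ for every $i\geq i_0$, so $\Phi_i^*f_i\to f$ is immediate. For the generator convergence, applying $\Phi_i^*$ to the finite-level intertwining relation gives $\Phi_i^*L_{F_i}f_i=\Phi_i^*\big((L_{F_{i_0}}h)\circ\phi_{i i_0}\big)=\Phi_{i_0}^*L_{F_{i_0}}h$, which by Lemma~\ref{L:Intertwin} (in its generator form) equals $L_{F_\infty}\Phi_{i_0}^*h=L_{F_\infty}f$, again independent of $i$, so $\Phi_i^*L_{F_i}f_i\to L_{F_\infty}f$ holds with equality for all $i\geq i_0$.

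The main obstacle, and the only point requiring genuine care, is justifying the finite-level intertwining identity $L_{F_i}(h\circ\phi_{i i_0})=(L_{F_{i_0}}h)\circ\phi_{i i_0}$ together with the domain membership $h\circ\phi_{i i_0}\in\mcD_{F_i}$. This is the exact analogue at finite levels of the relation~\eqref{E:Intertwin} that Lemma~\ref{L:Intertwin} records between level $\infty$ and level $i$; one expects it to hold because the maps $\phi_{i k}$ are the structural morphisms of the inverse system and the diffusions on the $F_i$ are built compatibly with these morphisms in~\cite{AR18}. I would therefore invoke the intertwining property of the finite-level semigroups $\{P_t^{F_i}\}$ under $\phi_{i i_0}^*$, differentiate the semigroup relation $P_t^{F_i}\phi_{i i_0}^*h=\phi_{i i_0}^*P_t^{F_{i_0}}h$ at $t=0$, and read off both the domain statement and the generator identity. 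Once this lemma is in hand the corollary is immediate, and it is worth remarking that the sequence we produce stabilizes rather than merely converging, which is a stronger conclusion than the bare core property of Theorem~\ref{T:core_ig} guarantees.
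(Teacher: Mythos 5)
Your argument is correct, but it is genuinely different from the paper's. The paper proves this corollary in one line by combining Lemma~\ref{L:Sg_strong_conv} (strong convergence of $\Phi_i^*P_t^{F_i}\Pi_i$ to $P_t^{F_\infty}$, uniformly on finite time intervals) with the abstract equivalence in \cite[Theorem 2.5]{Kim06} between generalized strong convergence of semigroups and convergence of the generators; the approximating sequence is then supplied by that theorem rather than constructed. You instead build the sequence by hand: writing $f=\Phi_{i_0}^*h$ and lifting $h$ along the connecting maps, $f_i:=h\circ\phi_{ii_0}$, so that $\Phi_i^*f_i=f$ exactly for $i\geq i_0$ by the compatibility $\phi_{ii_0}\circ\Phi_i=\Phi_{i_0}$. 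The one nontrivial ingredient you need, and correctly isolate, is the finite-level intertwining $P_t^{F_i}\phi_{ii_0}^*=\phi_{ii_0}^*P_t^{F_{i_0}}$ together with its differentiated form $\phi_{ii_0}^*h\in\mcD_{F_i}$ and $L_{F_i}(\phi_{ii_0}^*h)=\phi_{ii_0}^*(L_{F_{i_0}}h)$; this is not stated in the paper but follows from Lemma~\ref{L:HSFA01} (for $f=\phi_i^*g$ one has $\mcI_if=g$ and $\Pr_i^\bot f=0$, giving the one-step intertwining, which composes), using that $\phi_{ii_0}^*$ is an $L^2$-isometry because $(\phi_{ii_0})_*\mu_i=\mu_{i_0}$ in the inverse system — a small point you should make explicit when passing the $L^2$-limit of the difference quotients through $\phi_{ii_0}^*$. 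Your route avoids the external reference to Kim's theorem and yields the strictly stronger conclusion that the sequence stabilizes (for $i<i_0$ any choice, e.g.\ $f_i=0$, completes the family $\{f_i\}_{i\geq 0}$); the paper's route is shorter given the machinery already assembled and generalizes immediately to statements about the resolvent and to $f$ beyond $\mcD_0$, as noted in the remark following the corollary.
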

\begin{proof}
Apply Lemma~\ref{L:Sg_strong_conv} to~\cite[Theorem 2.5]{Kim06}.
\end{proof}

\begin{remark}
In view of~\cite[Theorem 2.5]{Kim06}, the latter result, or equivalently Lemma~\ref{L:Sg_strong_conv}, provides an analogous statement for the resolvent, that already appeared in~\cite[Theorem 4.3]{BE04}.  
Further spectral properties of the infinitesimal generator $L_{F_\infty}$ can be derived from the more abstract setting discussed in~\cite[Section 5]{ST12}.
\end{remark}

\subsection{Dirichlet form}
The Dirichlet form associated with $\{P^{F_\infty}_t\}_{t\geq 0}$ is given by
\begin{align*}
&\mcE^{F_\infty}(f,f)=\lim_{t\to 0}\frac{1}{t}\langle f-P_t^{F_\infty}f,f\rangle_{L^2(F_\infty,\mu_\infty)}\\
&\mcF^{F_\infty}=\{f\in L^2(F_\infty,\mu_\infty)~|~\mcE^{F_\infty}(f,f) \text{ exists and is finite}\},
\end{align*}
see e.\ g.~\cite[Definition 1.7.1]{BGL14}. 
As for the infinitesimal generator, also the Dirichlet forms $(\mcE^{F_i},\mcF^{F_i})$ with finite index $i$
may also be expressed in terms of cable systems/quantum graphs, c.f.~\cite[Remark 7 (i)]{AR18}. 
The main result in this paragraph is the \textit{generalized Mosco} convergence 
of the finite level Dirichlet forms to $(\mathcal{E}^{F_\infty},\mcF^{F_\infty})$. We recall from~\cite[Definition 2.11]{KS03}, see also~\cite[Definition 8.1]{CKK13}, that the sequence of quadratic forms 
$(\mcE^{F_i},\mcF^{F_i})$ Mosco converges to $(\mcE^{F_\infty},\mcF^{F_\infty})$ in the generalized sense if
\begin{enumerate}[leftmargin=3em,itemsep=1em]
\item[(M1)] For any sequence $\{f_i\}_{i\geq 0}$ with $f_i\in L^2(F_i,\mu_i)$ such that $\Phi_i^*f_i\xrightarrow{i\to\infty} f\in L^2(F_\infty,\mu_\infty)$ weakly in $L^2(F_\infty,\mu_\infty)$ it holds that
\begin{equation*}
\mcE^{F_\infty}(f,f)\leq\liminf_{i\to\infty}\mcE^{F_i}(f_i,f_i);
\end{equation*}
\item[(M2)] For any $f\in L^2(F_\infty,\mu_\infty)$ there exists $f_i\in L^2(F_i,\mu_i)$ such that $\Phi_i^*f_i\xrightarrow{i\to\infty} f$ strongly in $L^2(F_\infty,\mu_\infty)$ and 
\begin{equation*}
\limsup_{i\to\infty}\mcE^{F_i}(f_i,f_i)\leq \mcE^{F_\infty}(f,f).
\end{equation*}
\end{enumerate}
Combining the results from the previous paragraphs we can state several properties of the Dirichlet form.
\begin{theorem}\label{T:DF_props}
For the Dirichlet form $(\mcE^{F_\infty},\mcF^{F_\infty})$ associated with $\{P_t^{F_\infty}\}_{t\geq 0}$ it holds that
\begin{enumerate}[leftmargin=.25em,label={\rm (\roman*)},itemsep=.75em,wide=0em]
\item $(\mcE^{F_\infty},\mcF^{F_\infty})$ is the generalized Mosco limit of $\{(\mcE^{F_i},\mcF^{F_i})\}_{i\geq 0}$,
\item $\mcD_0$ is a core for $(\mcE^{F_\infty},\mcF^{F_\infty})$. 
\item\label{P:DF_props_3} For any $i\geq 1$ and $h\in \mcD_{F_i}$, $\displaystyle \mcE^{F_\infty}(\Phi_i^*h,\Phi_i^*h)=\mcE^{F_i}(h,h)$;
\item For any $f\in\mcF^{F_\infty}$ there exists $\{f_i\}_{i\geq 0}\subset \mcD_0$ such that 
\begin{equation}
\mcE^{F_\infty}(f,f)=\lim_{i\to\infty}\mcE^{F_i}(f_i,f_i).
\end{equation}
\item $(\mcE^{F_\infty},\mcF^{F_\infty})$ is local and regular.
\end{enumerate}
\end{theorem}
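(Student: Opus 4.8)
The plan is to establish the five assertions in an order that lets the later ones draw on the earlier, starting from the purely algebraic identity (iii), then importing the abstract convergence machinery of this section for (i), (ii) and (iv), and finally assembling everything for (v). I would begin with (iii), since it requires no limiting argument. Writing the form through the semigroup as in the definition of $\mcE^{F_\infty}$, the intertwining relation~\eqref{E:Intertwin} replaces $P_t^{F_\infty}\Phi_i^*h$ by $\Phi_i^*P_t^{F_i}h$, so that
\[
\tfrac1t\langle \Phi_i^*h-P_t^{F_\infty}\Phi_i^*h,\Phi_i^*h\rangle_{L^2(F_\infty,\mu_\infty)}=\tfrac1t\langle \Phi_i^*(h-P_t^{F_i}h),\Phi_i^*h\rangle_{L^2(F_\infty,\mu_\infty)}.
\]
Since $\Phi_i^*$ is an isometry (Proposition~\ref{P:Ana_props}(i)), polarization shows it preserves inner products, whence the right-hand side equals $\tfrac1t\langle h-P_t^{F_i}h,h\rangle_{L^2(F_i,\mu_i)}$; letting $t\to 0$ gives (iii), and the same computation in bilinear form yields $\mcE^{F_\infty}(\Phi_i^*h,\Phi_i^*k)=\mcE^{F_i}(h,k)$, which I reserve for the locality argument.

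\textbf{Mosco convergence and the core.} The central input for (i) is the Kuwae--Shioya equivalence between generalized Mosco convergence of the forms and strong convergence of the associated semigroups along the convergent sequence of spaces $\{L^2(F_i,\mu_i)\}$ of Corollary~\ref{C:L2sps_conv}; see~\cite[Theorem 2.4]{KS03} (also~\cite[Theorem 8.3]{CKK13}). Lemma~\ref{L:Sg_strong_conv} supplies exactly the needed strong convergence $\Phi_i^*P_t^{F_i}\Pi_i\to P_t^{F_\infty}$, so (i) follows immediately. For (ii) I would invoke Theorem~\ref{T:core_ig}: $\mcD_0$ is a core for the generator $L_{F_\infty}$, and for $u\in\mcD_{F_\infty}$ one has $\mcE^{F_\infty}(u,u)=\langle -L_{F_\infty}u,u\rangle\le\tfrac12(\|L_{F_\infty}u\|^2+\|u\|^2)$, so the graph norm dominates the form norm on $\mcD_{F_\infty}$; since $\mcD_{F_\infty}$ is form-dense in $\mcF^{F_\infty}$, density of $\mcD_0$ in the graph norm upgrades to density in the form norm, i.e.\ $\mcD_0$ is a form core.

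\textbf{Approximation (iv) and regularity.} With (i) in hand, for fixed $f\in\mcF^{F_\infty}$ the recovery sequence of (M2) provides $g_i\in\mcF^{F_i}$ with $\Phi_i^*g_i\to f$ strongly and $\limsup_i\mcE^{F_i}(g_i,g_i)\le\mcE^{F_\infty}(f,f)$; strong convergence implies weak convergence, so (M1) gives $\mcE^{F_\infty}(f,f)\le\liminf_i\mcE^{F_i}(g_i,g_i)$, and squeezing produces $\mcE^{F_i}(g_i,g_i)\to\mcE^{F_\infty}(f,f)$. A diagonal argument replacing each $g_i$ by a nearby element of $\mcD_{F_i}$ (dense in $\mcF^{F_i}$) lets me take $f_i=\Phi_i^*g_i\in\mcD_0$, with the energies readable on $F_\infty$ by (iii). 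For the regularity half of (v) I would verify that $\mcD_0\subseteq\mcF^{F_\infty}\cap C(F_\infty)$ is dense both in the form norm (which is (ii)) and in the sup norm: each $\Phi_i$ is $1$-Lipschitz by~\eqref{E:dist} and~\eqref{E:def_dist}, hence liftings are continuous, and since $\mcC_0=\bigcup_i\Phi_i^*C(F_i)$ is a point-separating subalgebra of $C(F_\infty)$ containing constants, Stone--Weierstrass makes it sup-dense; as $\mcD_{F_i}$ is sup-dense in $C(F_i)$ on each finite graph and $\Phi_i^*$ is a sup-norm isometry, $\mcD_0$ is sup-dense in $\mcC_0$, hence in $C(F_\infty)$.

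\textbf{Locality and the main obstacle.} For locality I would use the bilinear form of (iii): if $f=\Phi_i^*h$ and $g=\Phi_i^*k$ have disjoint supports in $F_\infty$, then by surjectivity of $\Phi_i$ the functions $h,k$ have disjoint supports in $F_i$, so the locality of the metric-graph form $\mcE^{F_i}$ forces $\mcE^{F_\infty}(f,g)=\mcE^{F_i}(h,k)=0$. The delicate point—and the step I expect to be the main obstacle—is extending this from same-level liftings, where (iii) applies cleanly, to arbitrary $f,g\in\mcF^{F_\infty}$ with disjoint supports: disjointness of supports behaves badly under the form-norm approximations coming from different levels, so a careful truncation-and-cutoff argument through the core $\mcD_0$ is needed to pass to the limit. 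Alternatively, one may bypass this by appealing to the fact that the Barlow--Evans process of~\cite{BE04} underlying $\{P_t^{F_\infty}\}_{t\ge0}$ is a diffusion; by Fukushima's correspondence a regular Dirichlet form whose Hunt process has continuous sample paths is local, which, combined with the regularity just established, closes (v).
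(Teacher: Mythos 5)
Your proof is correct and, for items (i)--(iii), runs essentially parallel to the paper's: (iii) via the intertwining relation~\eqref{E:Intertwin} and the isometry of $\Phi_i^*$, (i) via Lemma~\ref{L:Sg_strong_conv} combined with the equivalence between strong semigroup convergence and generalized Mosco convergence (the paper invokes \cite[Theorem 2.5]{Kim06} where you invoke \cite[Theorem 2.4]{KS03}; same content), and (ii) via Theorem~\ref{T:core_ig}, where your remark that the graph norm dominates the form norm is exactly the standard fact the paper leaves implicit. There are two genuine divergences. For (iv), you build the sequence from the (M2) recovery sequence, squeeze with (M1), and diagonalize into $\mcD_{F_i}$; the paper instead takes an approximating sequence directly from the form core $\mcD_0$ and transports the energies with (ii)--(iii). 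Both work; yours costs an extra diagonal step (and the observation that $g_i\in\mcF^{F_i}$ eventually, since the $\limsup$ of energies is finite) but does not lean on the specific structure of the core. For locality in (v), the paper asserts in one line that locality passes to the limit because each $\mcE^{F_i}$ is local, citing~\cite{ST12}; you correctly identify that the naive disjoint-support argument only covers same-level liftings and does not obviously survive form-norm approximation across levels, and you close the gap via the Fukushima correspondence between local regular Dirichlet forms and diffusions (see~\cite{FOT94}), using that the Barlow--Evans process underlying $\{P_t^{F_\infty}\}_{t\geq 0}$ has continuous sample paths. That route is self-contained, uses only the regularity you have already established, and is arguably more rigorous than the paper's assertion.
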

\begin{proof}
\begin{enumerate}[leftmargin=.25em,label=(\roman*),itemsep=.5em,wide=0em]
\item This follows from Lemma~\ref{L:Sg_strong_conv} and~\cite[Theorem 2.5]{Kim06}.
\item In view of Theorem~\ref{T:core_ig}, $\mcD_0$ is a core for $(\mcE^{F_\infty},\mcF^{F_\infty})$.
\item Notice that $\Phi_i^*h\in\mcD_0$. By Lemma~\ref{L:Intertwin}, for any $t>0$
\begin{align*}
\frac{1}{t}\langle \Phi_i^*h-P^{F_\infty}_t\Phi_i^*h,\Phi_i^*h\rangle_{L^2(F,\mu)}=\frac{1}{t}\langle \Phi_i^* h-\Phi_i^*P_t^{F_i}h,\Phi_i^*h\rangle_{L^2(F,\mu)}
=\frac{1}{t}\langle h- P_t^{F_i}h,h\rangle_{L^2(F_i,\mu_i)}.
\end{align*}
Letting $t\to 0$ we obtain $\mcE^{F_\infty}(\Phi_i^*h,\Phi_i^*h)=\mcE^{F_i}(h,h)$. 
\item By density, for any $f\in\mcF^{F_\infty}$ there exists a sequence $\{\Phi_i^* f_i\}_{i\geq 0}\subseteq\mcD_0\subset C(F_\infty)$ that approximates $f$ in $L^2(F_\infty,\mu_\infty)$, where $f_i\in C(F_i)$. Now, (i) and (ii) yield
\begin{equation}
\mcE^{F_i}(f_i,f_i)=\mcE^{F_\infty}(\Phi_i^*f_i,\Phi_i^*f_i)\xrightarrow{i\to\infty}\mcE^{F_\infty}(f,f).
\end{equation}
\item Since $\mcC_0\subseteq\mcF^{F_\infty}\cap C(F_\infty)$, the regularity follows from (ii). Thus, $(\mcE^{F_\infty},\mcF^{F_\infty})$ is local because $(\mcE^{F_i}, \mcF^{F_i})$ are, see also~\cite[Theorem 4.1]{ST12}.
\end{enumerate}
\end{proof}

\section{Estimates for the heat kernel}\label{S:contHK}
In this section we will use the expression of the heat kernel in~\eqref{E:HK_general_form} to obtain global heat kernel estimates and explicit bounds for the Lipschitz continuity of $p_t^{F_\infty}$ in $(F_\infty,d_\infty)$. In the particular case of regular diamonds with $n=j=2$, heat kernel estimates were obtained in~\cite[Theorem 4.7]{HK10} using ideas from~\cite{BK01} and exploiting the self-similarity of the space. The (joint) continuity of $p_t^{F_\infty}(x,y)$ may be derived using indirect arguments, see~\cite{AR18,HK10}, whereas the new estimates in Theorem~\ref{T:supHK} and Theorem~\ref{T:LipFinfty} provide a direct proof that gives information about the dependence of the bounds on the parameters.

\subsection{Uniform heat kernel bounds}
We start by showing that $p_t^{F_\infty}$ is uniformly bounded in $(F_\infty,d_\infty)$. The estimates will be applied in later sections to study related functional inequalities.

\begin{theorem}\label{T:supHK}
There exists $C_{\mathcal{N},\mathcal{J}}>0$ 
such that ,for any $t>0$,
\begin{equation}
\frac{1}{\sqrt{4\pi }}t^{-1/2}\leq \|p_t^{F_\infty}\|_\infty\leq\frac{1}{2\pi}+\frac{1}{\sqrt{4\pi }}t^{-1/2}+C_{\mathcal{N},\mathcal{J}}\, t^{-\frac{1}{2}(1+d(\ell_t^*))}, 
\end{equation}
where $d(\ell_t^*)=\frac{\log N_{\ell_t^*-1}}{\log J_{\ell_t^*-1}}$ and $\ell_t^*:=\inf\{\ell\geq 1\colon\, J_\ell^{-2}\leq t\}$.
\end{theorem}
From this result one readily deduces global estimates for short times.
\begin{corollary}\label{C:supHK}
There exists $C_{\mathcal{N},\mathcal{J}}>0$ such that
\begin{equation}\label{E:supHKshort}
\frac{1}{\sqrt{4\pi }}t^{-1/2}\leq \|p_t^{F_\infty}\|_\infty\leq C_{\mathcal{N},\mathcal{J}}\, t^{-\frac{1}{2}(1+d(\ell_t^*))}
\end{equation}
for any $t\in (0,1)$.
\end{corollary}

In particular, see Corollary~\ref{C:HKsup_regular}, the exponent on the right hand side of~\eqref{E:supHKshort} can be identified in the regular case with the \textit{spectral dimension} of $F_\infty$, which classically describes the short-time asymptotic behavior of the trace of the heat semigroup. 
\begin{proof}[Proof of Theorem~\ref{T:supHK}]
Let $t>0$ be fixed. Taking into consideration the convention $N_0=1$, the expression of $p_t^{F_\infty}$ given in~\eqref{E:HK_general_form} and Lemma~\ref{L:series_bound} yield
\begin{align}\label{E:supHK1}
p_t^{F_\infty}(x,y)&\leq p_t^{F_0}(\Phi_0(x),\Phi_0(y))+\frac{1}{\sqrt{\pi t}}\sum_{\ell=1}^\infty N_\ell\min\Big\{1,\frac{2}{(\pi J_\ell^2 t)^{1/2}}e^{-J_\ell^2t}\Big\}
\end{align}
for any $x,y\in X$. Define now $\ell_t^*:=\inf\{\ell\geq 1\,\colon\,J_\ell^{-2}\leq t\}$, so that 
\begin{equation}\label{E:supHK2}
J_{\ell_t^*}^{-2}\leq t <J_{\ell_t^*-1}^{-2}.
\end{equation}
To estimate the series in~\eqref{E:supHK1} we split it into two parts
\begin{equation}\label{E:supHK3}
\frac{1}{\sqrt{\pi t}}\sum_{\ell=1}^{\ell_t^*-1}N_\ell+\frac{2}{\sqrt{\pi t}}\sum_{\ell=\ell_t^*}^\infty\frac{N_\ell}{(J_\ell^2 t)^{1/2}}e^{-J_\ell^2t}=:\frac{1}{\sqrt{\pi t}}S_1+\frac{2}{\sqrt{\pi t}}S_2.
\end{equation}
For the first term we have
\begin{align}
S_1&=N_{\ell_t^*-1}\sum_{\ell=1}^{\ell_t^*-1}\frac{N_\ell}{N_{\ell_{t^*-1}}}=N_{\ell_t^*-1}\Big(1+\sum_{\ell=1}^{\ell_t^*-2}n_{\ell+1}^{-1}\cdots n_{{\ell_t^*-1}}^{-1}\Big)\notag\\
&\leq N_{\ell_t^*-1}\Big(1+\sum_{\ell=1}^{\ell_t^*-2}2^{-\ell_t^*+\ell+1}\Big) 
\leq N_{\ell_t^*-1}\sum_{k=0}^\infty 2^{-k}=2N_{\ell_t^*-1}.\label{E:supHK4}
\end{align}
For the second term in~\eqref{E:supHK3}, using~\eqref{E:supHK2} and the notation from Definition~\ref{D:prod_seq} we have
\begin{align}
S_2&=N_{\ell_t^*-1}\sum_{\ell=\ell_t^*}^\infty\frac{N_\ell}{N_{\ell_t^*-1}}\frac{J_{\ell_t^*}}{J_\ell(J_{\ell_t^*}^2 t)^{1/2}}\exp\Big\{-\frac{J^2_\ell}{J^2_{\ell_t^*}}J_{\ell_t^*}^2t\Big\}\notag \\
&\leq N_{\ell_t^*-1}\sum_{\ell=\ell_t^*}^\infty \frac{N_{\ell_t^*,\ell}}{J_{\ell_t^*+1,\ell}}e^{-J_{\ell_t^*+1,\ell}^2} 
\leq N_{\ell_t^*-1}\sum_{\ell=\ell_t^*}^\infty N_{\ell_t^*,\ell}e^{-J_{\ell_t^*+1,\ell}^2} \label{E:supHK5}
\end{align}
where the latter series converges and can be bounded independently of $t$ by Assumption~\ref{A:NJ_ass2}. Thus, there is a constant $C_{\mathcal{N},\mathcal{J}}>0$ that only depends on the parameter sequences $\mathcal{N},\mathcal{J}$ such that
\begin{equation*}
S_1+S_2\leq C_{\mathcal{N},\mathcal{J}}N_{\ell_t^*-1}=C_{\mathcal{N},\mathcal{J}}(J_{\ell_{t^*-1}}^2)^{\frac{1}{2}d(\ell_t^*)}\leq C_{\mathcal{N},\mathcal{J}} t^{-\frac{1}{2}d(\ell_t^*)},
\end{equation*}
where $d(\ell_t^*):=\frac{\log N_{\ell_{t^*-1}}}{\log J_{\ell_{t^*-1}}}$ and the last inequality is due to~\eqref{E:supHK2}.
Finally, applying Lemma~\ref{L:unif_F0} to the first term of~\eqref{E:supHK1} and the previous estimates for the second term yields
\begin{equation}
p_t^{F_\infty}(x,y)\leq\frac{1}{2\pi}+\frac{1}{\sqrt{4\pi }}t^{-1/2}+C_{\mathcal{N},\mathcal{J}} t^{-\frac{1}{2}(1+d(\ell_t^*))}.
\end{equation}
The lower bound readily follows from the expression~\eqref{E:HK_general_form} and~\eqref{E:PoissonFormula}. 
\end{proof}

\begin{remark}\label{R:series_bound}
Replacing Assumption~\ref{A:NJ_ass2} by~\ref{E:NJ_ass}, one concludes from~\eqref{E:supHK3} estimates of the type
\begin{equation*}
S_1\leq \ell_t^* t^{-\frac{1}{2}d(\ell_t^*)}\qquad\text{and}\qquad S_2\leq C_tt^{-\frac{1}{2}d(\ell_t^*)}
\end{equation*}
for some $C_t>0$ that bounds the series in~\eqref{E:supHK5}. In the same way as before, these would now provide
\begin{equation*}
p_t^{F_\infty}(x,y)\leq\frac{1}{2\pi}+\frac{1}{\sqrt{4\pi }}t^{-1/2}+t^{-\frac{1}{2}(1+d(\ell_t^*))}(C\ell_t^*+\tilde{C} C_t),
\end{equation*}
without further information about the dependence on $t$ of the constant $C_t$.
\end{remark}

\subsection*{Regular diamond fractals}
In the particular case of regular diamonds, since $n_i=n$ and $j_i=j$ for all $i\geq 1$, Assumption~\ref{A:NJ_ass2} can be checked by a direct computation since for any $k\geq 0$
\begin{align*}
\sum_{\ell=k}^\infty n^{\ell-k+1}j^{\ell-k}e^{-j^{2(\ell-k)}}
&\leq ne^{-1}+\int_0^\infty n^\xi j^\xi e^{-2\xi}d\xi
=ne^{-1}+\frac{1}{2\log j}\Gamma\Big(\frac{1}{2}\Big(1+\frac{\log n}{\log j}\Big)\Big).
\end{align*}
Similarly, we can compute explicitly the bound in~\eqref{E:supHK5} an get
\[
\sum_{\ell=k}^\infty n^{\ell-k+1}e^{-j^{2(\ell-k)}}\leq ne^{-1}+\int_0^\infty n^\xi e^{-2\xi}d\xi=ne^{-1}+\frac{1}{2\log j}\Gamma\Big(\frac{\log n}{2\log j}\Big).
\]
Finally, because $\frac{\log N_\ell}{\log J_\ell}=\frac{\log n}{\log j}$ for all $\ell\geq 1$, Theorem~\ref{T:supHK} provides the following global estimate of the sup-norm.
\begin{corollary}\label{C:HKsup_regular}
On a regular diamond fractal with parameters $n,j\geq 2$ there exists $C_{n,j}>0$ such that 
\begin{equation}
\frac{1}{\sqrt{4\pi }}t^{-1/2}\leq \|p^{F_\infty}_t\|\leq \frac{1}{2\pi}+\frac{1}{\sqrt{4\pi }}t^{-1/2}+C_{n,j}\,t^{-\frac{1}{2}\big(1+\frac{\log n}{\log j}\big)}.
\end{equation}
\end{corollary}
In particular, $1+\frac{\log n}{\log j}$ coincides with the spectral dimension $d_S$ of $F_\infty$, see e.g.~\cite[Theorem A.2]{KL93}. 
In this case, it coincides with the Hausdorff dimension $d_H$, in agreement with the observation that the walk dimension of a diamond fractal is $d_w=\frac{2d_H}{d_S}=2$.

\subsection{Continuity estimates}
The recursive nature of the underlying space is also reflected in the proof of the continuity of the heat kernel. In particular, the bound of the Lipschitz constant relies on a recursive argument, for which the case $i=1$ serves both as guideline and first induction step. The different pair-point configurations for that level, summarized in Figure~\ref{F:GeneralPairPoint}, will be analyzed by means of standard estimates recorded in Appendix~\ref{S:Apx_HKcircle}.

\begin{theorem}\label{T:LipFinfty}
For any $t>0$, the heat kernel $p^{F_\infty}_t\colon F_\infty\times F_\infty\to [0,\infty)$ is Lipschitz continuous in $(F_\infty,d_\infty)$ 
and satisfies for any $x,y_1,y_2\in F_\infty$ 
\begin{equation}\label{E:LipFinfty}
|p_t^{F_\infty}(x,y_1)-p_t^{F_\infty}(x,y_2)|\leq  C t^{-1-\frac{1}{2}d(\ell_t^*)} d_\infty(y_1,y_2),
\end{equation}
where $d(\ell_t^*)=\frac{\log N_{\ell_*^*-1}}{\log J_{\ell_*^*-1}}$, $\ell_t^*:=\inf\{\ell\geq 1\,\colon\,J_\ell^{-2}\leq t\}$ and some constant $C>0$ depending on the parameter sequences $\mathcal{N}$, $\mathcal{J}$.
\end{theorem}

\begin{proof}
Since $p_t^{F_i}(x,y)$ converges uniformly to $p^{F_\infty}_t(x,y)$, see~\cite[Remark 8]{AR18}, the latter is continuous and its Lipschitz constant $C_L(t)$ may be bounded by taking the limit $i\to\infty$ in Proposition~\ref{T:Lip_Fi}, which leads to
\begin{equation}\label{E:LipFinfty_series}
C_L(t)\leq \frac{2}{\pi}\sum_{\ell=0}^\infty N_{\ell}\Big(J_\ell^2+\frac{1}{2t}\Big)e^{-J_\ell^2t}.
\end{equation}
To estimate this series, we consider again $\ell_t^*:=\inf\{\ell\geq 1\,\colon\,J_\ell^{-2}\leq t\}$, so that $J_{\ell_t^*}^{-2}\leq t <J_{\ell_t^*-1}^{-2}$ and decompose the series in~\eqref{E:LipFinfty_series} into the two terms
\begin{equation*}
\sum_{\ell=0}^{\ell_t^*-1} N_{\ell}\Big(J_\ell^2+\frac{1}{2t}\Big)e^{-J_\ell^2t}+\sum_{\ell=\ell_t^*}^\infty N_{\ell}\Big(J_\ell^2+\frac{1}{2t}\Big)e^{-J_\ell^2t}=:S_1+S_2.
\end{equation*}
For the first, with the notation of Definition~\ref{D:prod_seq}, analogous computations to those in~\eqref{E:supHK4} yield
\begin{equation*}
S_1\leq\frac{3}{2t}\sum_{\ell=0}^{\ell_t^*-1}N_\ell e^{-J_{\ell_t^*}^2t}\leq\frac{3}{2t}N_{\ell_t^*-1}\sum_{\ell=0}^{\ell_t^*-1}N_{\ell+1,\ell^*_t-1}^{-1}
\leq \frac{3}{2t}N_{\ell_t^*-1}\sum_{\ell=0}^{\ell_t^*-1}2^{-(\ell_t^*-\ell-1)}
\leq 
\frac{3}{t}N_{\ell_t^*-1}
\end{equation*}
For the second term, using $J_{\ell_t^*}^{-2}\leq t <J_{\ell_t^*-1}^{-2}$ we have
\begin{align}
S_2&\leq \frac{3}{2t}\sum_{\ell=\ell_t^*}^\infty N_\ell J_\ell^2t e^{-J_\ell^2 t}=\frac{3}{2t}N_{\ell_t^*-1}J_{\ell_t^*-1}^2 t \sum_{\ell=\ell_t^*}^\infty N_{\ell_t^*,\ell}J_{\ell_t^*,\ell}^2e^{-J_\ell^2 t}\notag\\
&\leq \frac{3}{2t}N_{\ell_t^*-1} \sum_{\ell=\ell_t^*}^\infty N_{\ell_t^*,\ell}J_{\ell_t^*,\ell}^2e^{-J_{\ell_t^*+1,\ell}^2}.\label{E:LipFinfty_01}
\end{align}
By virtue of Assumption~\ref{A:NJ_ass2} the latter series is bounded independently of $t$. Combining the two previous estimates and setting $d(\ell_t^*):=\frac{\log N_{\ell_{t^*-1}}}{\log J_{\ell_{t^*-1}}}$ it follows 
that there is $C>0$ such that
\begin{equation*}
S_1+S_2\leq Ct^{-1}N_{\ell_t^*-1}=Ct^{-1}N_{\ell_t^*-1}=Ct^{-1}(J_{\ell_{t^*-1}}^2)^{\frac{1}{2}d(\ell_t^*)}\leq C t^{-1-\frac{1}{2}d(\ell_t^*)}.
\end{equation*}
\end{proof}

\begin{proposition}\label{T:Lip_Fi}
For any $t>0$, the heat kernel $p_t^{F_i}$ is Lipschitz continuous in $(F_i,d_i)$ and
\begin{equation}\label{E:LipFi}
|p_t^{F_i}(x,y_1)-p_t^{F_i}(x,y_2)|
\leq \frac{2}{\pi}\sum_{\ell=0}^i N_{\ell}\Big(J_\ell^2+\frac{1}{2t}\Big)e^{-J_\ell^2t}
d(y_1,y_2).
\end{equation}
\end{proposition}

This result is proved by induction, whose first step is presented below separately. For the ease of the notation and to remain consistent with the one appearing in~\cite{AR18}, we will set $\theta_x:=\phi_{i0}(x)$ for $x\in F_i$ and $\theta_x:=\Phi_0(x)$ for $x\in F_\infty$.

\begin{proposition}\label{P:Lip_F1}
For each $t>0$, the heat kernel $p_t^{F_1}$ is Lipschitz continuous in $(F_1,d_1)$ and 
\begin{equation*}
|p_t^{F_1}(x,y_1)-p_t^{F_1}(x,y_2)|
\leq \frac{2}{\pi}\Big[\Big(1+\frac{1}{2t}\Big)e^{-t}+n_1\Big(j^2_1+\frac{1}{2t}\Big)e^{-j_1^2t}\Big]
d(y_1,y_2).
\end{equation*}
\end{proposition}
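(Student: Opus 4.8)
The plan is to fix $x\in F_1$ and bound the spatial variation of the single-argument function $y\mapsto p_t^{F_1}(x,y)$; by symmetry of the heat kernel and the triangle inequality the joint Lipschitz bound then follows. On each branch ($1$-cell) of $F_1$ the function $y\mapsto p_t^{F_1}(x,y)$ is smooth in the arc-length parameter, so I would bound its arc-length derivative branchwise and then globalize using the chain property of Lemma~\ref{L:chain_cond}: since any geodesic decomposes into segments joining consecutive junction points $z_\ell\in B_1$, along which the kernel is continuous, the global Lipschitz constant is the supremum of the branchwise ones. The configurations of Figure~\ref{F:GeneralPairPoint} organize the case analysis, and the generic case (d) reduces to the local cases (a)--(c) by inserting the junction point $z_0\in B_1$ through which the geodesic from $y$ to $y'$ passes and splitting $|p_t^{F_1}(x,y)-p_t^{F_1}(x,y')|$ accordingly.

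For the branchwise bounds I would use the explicit formula of Theorem~\ref{T:HK_Finfty}, in the Dirichlet-kernel form recorded after Corollary~\ref{C:Closed_HKformula} (obtained via the scaling identity~\eqref{E:Dir_HKvsF0}),
\[
p_t^{F_1}(x,y)=p_t^{F_0}(\theta_x,\theta_y)+\delta_{xy}(n_1)\,N_0\,p_t^{[0,\pi/j_1]_D}(\theta_x,\theta_y),\qquad N_0=1,
\]
valid when $x,y$ share a bundle (the correction term being absent otherwise), where $\theta_x=\phi_{10}(x)$ and $J_1=j_1$. The argument then rests on two elementary derivative estimates. First, expanding the circle kernel as $p_t^{F_0}(\theta,\theta')=\tfrac1{2\pi}\sum_{k\in\mbbZ}e^{-k^2t}\cos(k(\theta-\theta'))$ gives
\[
|\partial_{\theta'}p_t^{F_0}(\theta,\theta')|\le\frac1\pi\sum_{k\ge1}k\,e^{-k^2t}\le\frac1\pi\Big(1+\frac1{2t}\Big)e^{-t},
\]
where the last step isolates the $k=1$ term and compares the tail with $\int_1^\infty x\,e^{-x^2t}\,dx=\tfrac1{2t}e^{-t}$. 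Second, the sine expansion of the Dirichlet kernel on $[0,\pi/j_1]$, whose eigenvalues are $k^2j_1^2$, yields the analogous bound
\[
|\partial_{b}\,p_t^{[0,\pi/j_1]_D}(a,b)|\le\frac{2j_1^2}\pi\sum_{k\ge1}k\,e^{-k^2j_1^2t}\le\frac2\pi\Big(j_1^2+\frac1{2t}\Big)e^{-j_1^2t},
\]
the factor $j_1^2$ coming from the eigenvalue scaling and the factor $2$ from the normalization $2/L$ of the sine basis with $L=\pi/j_1$.

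Assembling these, and using $d_0(\theta_y,\theta_{y'})\le d_1(y,y')$ from Lemma~\ref{L:dist} to convert $F_0$-derivatives into $F_1$-Lipschitz bounds, the base term contributes at most $\tfrac1\pi(1+\tfrac1{2t})e^{-t}\le\tfrac2\pi(1+\tfrac1{2t})e^{-t}$ in every configuration, while the correction term is present only in the same-bundle configurations (b) and (c), where $|\delta_{xy}(n_1)|\le n_1-1$, contributing at most $\tfrac2\pi(n_1-1)(j_1^2+\tfrac1{2t})e^{-j_1^2t}\le\tfrac2\pi n_1(j_1^2+\tfrac1{2t})e^{-j_1^2t}$. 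The worst case is the same-branch configuration (c), and summing the two contributions gives the stated bound on $C^{(1)}_L(t)$.

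The main obstacle is not the derivative estimates themselves but the gluing: because the defining formula for $p_t^{F_1}(x,\cdot)$ changes as $y$ crosses branch and bundle boundaries, $p_t^{F_1}(x,\cdot)$ is only piecewise smooth, and one must invoke the chain property together with continuity of the kernel at the junction points $B_1$ to promote the branchwise derivative bounds to a genuine global Lipschitz estimate. A secondary point requiring care is keeping the three scaling factors consistent --- the $j_1$ inside the argument, the prefactor $J_1=j_1$, and the rescaled time $j_1^2t$ --- which together produce the factor $j_1^2$ in the second term; tracking the sign and multiplicity of $\delta_{xy}(n_1)$ across configurations (a)--(c) is what pins down that the same-branch case is extremal.
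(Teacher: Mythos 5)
Your proposal is correct and follows essentially the same route as the paper: the same case analysis over the pair-point configurations of Figure~\ref{F:GeneralPairPoint}, the same use of the chain property of Lemma~\ref{L:chain_cond} to reduce to same-branch pairs, and the same series estimates $\sum_{k\geq1}k e^{-k^2t}\leq(1+\tfrac{1}{2t})e^{-t}$ for the circle and Dirichlet kernels (the paper bounds $|\cos(ka)-\cos(kb)|\leq k|a-b|$ directly rather than phrasing it as a derivative bound, but this is the same estimate). The identification of the same-branch configuration as extremal and the relaxation $n_1-1\leq n_1$ also match the paper's conclusion.
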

\begin{proof}
By virtue of Lemma~\ref{L:chain_cond} and the triangle inequality, a configuration as in Figure~\ref{F:PP_generic} will be covered by any of the basic cases shown in Figures~\ref{F:PPcase_a} through~\ref{F:PPcase_c}.

\begin{figure}[H]
\begin{subfigure}[b]{.4\textwidth}
\centering
\begin{tikzpicture}
\coordinate[label=right:{$x$}] (x) at ($(30:1.15)$);
\fill (x) circle (1.5pt);
\coordinate[label=above:{$y_1$}] (y1) at ($(90:1)$);
\fill (y1) circle (1.5pt);
\coordinate[label=left:{$y_2$}] (y2) at ($(320:.75)$);
\fill (y2) circle (1.5pt);
\foreach \a in {0,60,120,180,240,300}{
\draw ($(\a:1)$) to[out=90-55+\a,in=15+\a] ($(60+\a:1)$);
\draw ($(\a:1)$) to[out=90+\a,in=330+\a] ($(60+\a:1)$);
\draw ($(\a:1)$) to[out=90+65+\a,in=-90+5+\a] ($(60+\a:1)$);
}
\end{tikzpicture}
\caption{$y_1$ and $y_2$ belong to bundles different than the one of $x$.}
\label{F:PPcase_a}
\end{subfigure}
\hspace*{1em}
\begin{subfigure}[b]{.4\textwidth}
\centering
\begin{tikzpicture}
\coordinate[label=right:{$x$}] (x) at ($(30:1.15)$);
\fill (x) circle (1.5pt);
\coordinate[label=above:{$y_1$}] (y1) at ($(40:1)$);
\fill (y1) circle (1.5pt);
\coordinate[label=left:{$y_2$}] (y2) at ($(20:.75)$);
\fill (y2) circle (1.5pt);
\foreach \a in {0,60,120,180,240,300}{
\draw ($(\a:1)$) to[out=90-55+\a,in=15+\a] ($(60+\a:1)$);
\draw ($(\a:1)$) to[out=90+\a,in=330+\a] ($(60+\a:1)$);
\draw ($(\a:1)$) to[out=90+65+\a,in=-90+5+\a] ($(60+\a:1)$);
}
\end{tikzpicture}
\caption{$y_1$ and $y_2$ belong different branches in the bundle of $x$.}
\label{F:PPcase_b}
\end{subfigure}
\end{figure}
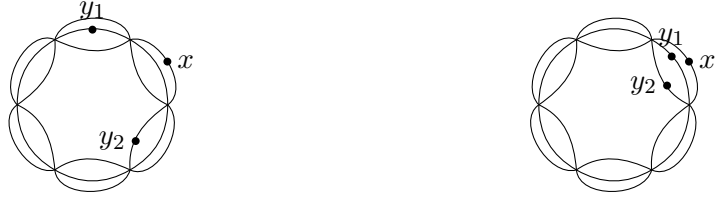
\begin{figure}[H]\ContinuedFloat
\begin{subfigure}[b]{.4\textwidth}
\centering
\begin{tikzpicture}
\coordinate[label=right:{$x$}] (x) at ($(30:1.15)$);
\fill (x) circle (1.5pt);
\coordinate[label=above:{$y_1$}] (y1) at ($(40:1.15)$);
\fill (y1) circle (1.5pt);
\coordinate[label=right:{$y_2$}] (y2) at ($(15:1.15)$);
\fill (y2) circle (1.5pt);
\foreach \a in {0,60,120,180,240,300}{
\draw ($(\a:1)$) to[out=90-55+\a,in=15+\a] ($(60+\a:1)$);
\draw ($(\a:1)$) to[out=90+\a,in=330+\a] ($(60+\a:1)$);
\draw ($(\a:1)$) to[out=90+65+\a,in=-90+5+\a] ($(60+\a:1)$);
}
\end{tikzpicture}
\caption{Both $y_1$ and $y_2$ belong to the same branch as $x$.}
\label{F:PPcase_c}
\end{subfigure}
\hspace*{1em}
\begin{subfigure}[b]{.4\textwidth}
\centering
\begin{tikzpicture}
\coordinate[label=right:{$x$}] (x) at ($(30:1.15)$);
\fill (x) circle (1.5pt);
\coordinate[label=left:{$y_1$}] (y1) at ($(20:.75)$);
\fill (y1) circle (1.5pt);
\coordinate[label=right:{$y_2$}] (y2) at ($(220:.75)$);
\fill (y2) circle (1.5pt);
\foreach \a in {0,60,120,180,240,300}{
\draw ($(\a:1)$) to[out=90-55+\a,in=15+\a] ($(60+\a:1)$);
\draw ($(\a:1)$) to[out=90+\a,in=330+\a] ($(60+\a:1)$);
\draw ($(\a:1)$) to[out=90+65+\a,in=-90+5+\a] ($(60+\a:1)$);
}
\coordinate[label=right:{$z_0$}] (z1) at ($(0:1)$);
\fill[color=blue] (z1) circle (1.5pt);
\end{tikzpicture}
\caption{$y_1$ and $y_2$ belong to different bundles, and $y_2$ to the bundle of $x$.}
\label{F:PP_generic}
\end{subfigure}
\caption{Basic pair-point configurations.}
\label{F:GeneralPairPoint}
\end{figure}

\begin{enumerate}[leftmargin=.25em,label=(\alph*),itemsep=1em,wide=0em]
\item Each $y_1$ and $y_2$ belong to bundles that are different than the one $x$ belongs to, see Figure~\ref{F:PPcase_a}. In view of the expression of $p_t^{F_1}(x,y)$, Lemma~\ref{L:HK_formulations} and Lemma~\ref{L:dist} we have (recall $\theta_x:=\phi_{i0}(x)$)
\begin{align*}
|p_t^{F_1}(x,y_1)-p_t^{F_1}(x,y_2)|&=|p_t^{F_0}(\theta_x,\theta_{y_1})-p_t^{F_0}(\theta_x,\theta_{y_2})|\\
&\leq\frac{1}{\pi}\sum_{k=1}^\infty e^{-k^2t}|\cos\big(k(\theta_{y_1}-\theta_x)\big)-\cos\big(k(\theta_{y_2}-\theta_x)\big)|\\
&\leq\frac{1}{\pi}\sum_{k=1}^\infty e^{-k^2t}k|\theta_{y_1}-\theta_{y_2}|
\leq\Big(\frac{1}{\pi}\sum_{k=1}^\infty k e^{-k^2t}\Big) d_1(y_1,y_2)\\%
&\leq \frac{1}{\pi}\Big(e^{-t}+\int_1^\infty\xi e^{-\xi^2t}d\xi\Big)d_1(y_1,y_2)
=\frac{1}{\pi}\Big(1+\frac{1}{2t}\Big)e^{-t}d_1(y_1,y_2).
\end{align*}
\item Each of $y_1$ and $y_2$ belong to a different branch than $x$ but from the same bundle as $x$, see Figure~\ref{F:PPcase_b}. Then, 
\begin{equation*}
|p_t^{F_1}(x,y_1)-p_t^{F_1}(x,y_2)|\leq|p_t^{F_0}(\theta_x,\theta_{y_1})-p_t^{F_0}(\theta_x,\theta_{y_2})|+|p_t^{[0,L_1]_D}(\theta_x,\theta_{y_1})-p_t^{[0,L_1]_D}(\theta_x,\theta_{y_2})|.
\end{equation*}
The first term can be estimated as in the case (i). For the second term we have
\begin{align*}
|p_t^{[0,L_1]_D}(\theta_x,\theta_{y_1})-p_t^{[0,L_1]_D}(\theta_x,\theta_{y_2})|&
\leq\frac{2}{L_1}\sum_{k=1}^\infty e^{-\frac{k^2\pi^2}{L_1^2}t}\Big|\sin\Big(\frac{k\pi\theta_{y_1}}{L_1}\Big)-\sin\Big(\frac{k\pi\theta_{y_2}}{L_1}\Big)\Big|\\
&\leq \Big(\frac{2j_1^2}{\pi}\sum_{k=1}^\infty k  e^{-k^2j_1^2t}\Big)d_0(\phi_1(y_1),\phi_1(y_2))\\
&\leq \frac{2}{\pi}\Big(j^2_1e^{-j_1^2t}+\int_1^\infty j^2_1\xi  e^{-\xi^2j_1^2t}d\xi\Big)d_1(y_1,y_2)\\
&= \frac{2}{\pi}\Big(j^2_1e^{-j_1^2t}+\frac{1}{2 t}e^{-j_1^2t}\Big)d_1(y_1,y_2).
\end{align*}
\item Finally, if both $y_1$ and $y_2$ belong to the same branch as $x$, see Figure~\ref{F:PPcase_c}, then
\begin{equation*}
|p_t^{F_1}(x,y_1)-p_t^{F_1}(x,y_2)|\leq|p_t^{F_0}(\theta_x,\theta_{y_1})-p_t^{F_0}(\theta_x,\theta_{y_2})|+(n_1-1)|p_t^{[0,L_1]_D}(\theta_x,\theta_{y_1})-p_t^{[0,L_1]_D}(\theta_x,\theta_{y_2})|
\end{equation*}
which reduces to the previous case with an extra factor $(n_1-1)$ in the second summand.
\end{enumerate}
\end{proof}
The previous proposition serves now both as proof schema and first step to show the corresponding estimate for the Lipschitz constant in a generic finite level.

\begin{proof}[Proof of Proposition~\ref{T:Lip_Fi}]
We argue by induction, the case $i=1$ being Proposition~\ref{P:Lip_F1}. By virtue of the triangle inequality it suffices to prove~\eqref{E:LipFi} 
for any fixed $t>0$, $x\in F_i$ and $y_1,y_2$ such that $(x,y_1)$ and $(x,y_2)$ have the same pair-point configuration. Indeed, if $(x,y_1)$ and $(x,y_2)$ are of different type, we can always choose $z_0\in B_i$ such that $d_i(y_1,y_2)=d_i(y_1,z_0)+d_i(z_0,y_2)$ and consider $z_0$ each time as belonging to the particular cell for which $(x,z_0)$ and $(x,y_1)$ are of the same type, and $(x,z_0)$ and $(x,y_2)$ are of the same type, see Figure~\ref{F:PP_generic}. Let us thus assume that the Lipschitz constant for the $(i-1)$approximation, $C_L^{(i-1)}(t)$, can be bounded by the corresponding expression from~\eqref{E:LipFi}.

\begin{enumerate}[leftmargin=.25em,label=(\roman*),itemsep=1em,wide=0em]
\item If $(x,y)$ and $(x,y')$ are both of the first type, see Figure~\ref{F:PPcase_a}, the expression of the heat kernel and Lemma~\ref{L:dist} yield
\begin{align*}
|p_t^{F_i}(x,y_1)-p_t^{F_i}(x,y_2)|&=|p_t^{F_{i-1}}(\phi_i(x_,\phi_i(y_1))-p_t^{F_{i-1}}(\phi_i(x),\phi_i(y_2))|\\
&\leq C_L^{(i-1)}(t)d_{i-1}(\phi_i(y_1),\phi_i(y_2))\leq C_L^{(i-1)}(t)d_i(y_1,y_2).
\end{align*}
\item If $(x,y_1)$ and $(x,y_2)$ are both of the second type, see Figure~\ref{F:PPcase_b}, then
\begin{multline*}
|p_t^{F_i}(x,y_1)-p_t^{F_i}(x,y_2)|\leq |p_t^{F_{i-1}}(\phi_i(x,\phi_i(y_1))-p_t^{F_{i-1}}(\phi_i(x),\phi_i(y_2))|\\
+N_{i-1}|p_t^{[0,L_i]_D}(\theta_x,\theta_{y_1})-p_t^{[0,L_i]_D}(\theta_x,\theta_{y_2})|.
\end{multline*}
The first term can be estimated as in the case (i). For the second term we have from the proof of the case $i=1$ in Proposition~\ref{P:Lip_F1} (substituting $L_1$ by $L_i$) and Lemma~\ref{L:dist} that
\begin{align*}
|p_t^{[0,L_i]_D}(\theta_x,\theta_{y_1})&-p_t^{[0,L_i]_D}(\theta_x,\theta_{y_2})|
\leq \frac{2}{\pi}\Big(J_i^2e^{-J_i^2t}+\frac{1}{2 t}e^{-J_i^2t}\Big)d_i(y_1,y_2).
\end{align*}
\item The case when $(x,y_1)$ and $(x,y_2)$ are both of the third type, see Figure~\ref{F:PPcase_c}, reduces to the previous case with an extra factor $(n_i-1)$. 
\end{enumerate}
Putting all estimates together and using the induction hypothesis we have
\begin{align*}
|p_t^{F_i}(x,y_1)-p_t^{F_i}(x,y_2)| 
&\leq \Big(C_L^{(i-1)}(t)+\frac{2}{\pi}N_i\Big(J_i^2e^{-J_i^2t}+\frac{1}{2 t}e^{-J_i^2t}\Big)\Big)d_i(y_1,y_2)\\
&=\Big(\frac{2}{\pi}\sum_{\ell=0}^iN_\ell\Big(J_\ell^2+\frac{1}{2 t}\Big)e^{-J_\ell^2t}\Big)d_i(y_1,y_2)
\end{align*}
as we wanted to prove.
\end{proof}
\begin{remark}\label{R:bdd series}
The Lipschitz constant of Theorem~\ref{T:LipFinfty} is bounded by the series~\eqref{E:LipFinfty_series}. The convergence of this series is already guaranteed under the weaker condition~\eqref{E:NJ_ass}.
However, at this level of generality, again little more may be said about the behavior of the series as a function of $t$.
\end{remark}

\section{Continuity estimates of the heat semigroup}\label{S:contHS}
The aim of this section is to study the regularity of the heat semigroup $\{P_t^{F_\infty}\}_{t\geq 0}$ as a means to describe the geometry of $F_\infty$ along the lines of the so-called \textit{weak Barky-\'Emery curvature condition} from~\cite{ABCRST3}. This condition reads
\begin{equation}\label{E:BE_kappa}
|P^{F_\infty}_tf(x)-P^{F_\infty}_tf(y)|\leq C\frac{d_\infty(x,y)^\kappa}{t^{\kappa/d_w}}\|f\|_\infty
\end{equation}
for all $f\in L^\infty(F_\infty,\mu_\infty)$, where $\kappa>0$ denotes a (curvature) parameter and $d_w>0$ the \textit{walk dimension} of the space. We refer to~\cite{ABCRST3,ABCRST4} for further details and functional analytic consequences in the context of Dirichlet spaces with various heat kernel estimates. 
On each approximation level, Proposition~\ref{T:BE_Fi} shows that the condition~\eqref{E:BE_kappa} is satisfied with $\kappa=1$ and $d_w=2$. While the latter is expected because each $F_i$ is a one-dimensional object, the situation in the limit is less clear. The estimate in Theorem~\ref{T:BE_Finfty} reveals in concrete computations such as Corollary~\ref{C:BE_interval} a logarithmic correction that is also observed in diffusion processes with multifractal structures, see e.g.~\cite{BK01}. However, it is still an open question, whether this time dependence is actually optimal.

\begin{theorem}\label{T:BE_Finfty}
For any $t>0$, there exists a constant $C>0$ such that
\begin{equation}\label{E:BE_Finfty}
|P^{F_\infty}_tf(x)-P^{F_\infty}_t f(y)|\leq \frac{C}{\sqrt{t}}(1+\ell_t^*) \,d_\infty(x,y)\,\|f\|_\infty ,
\end{equation}
for any $f\in L^\infty(F_\infty,\mu_\infty)$ and $x,y\in F_\infty$, where $\ell_t^*:=\inf\{\ell\geq 1\colon\, J_\ell^{-2}\leq t\}$.
\end{theorem}


The proof of Theorem~\ref{T:BE_Finfty} is presented in detail at the end of the section and it relies as in Theorem~\ref{T:LipFinfty} on the corresponding result for the approximating spaces $F_i$. 
In contrast to the latter, the ``chain property'' from Lemma~\ref{L:chain_cond} turns out crucial to reduce the analysis of pair-point configurations to the case of pairs that belong to the same branch. 
%
%
We continue using the notation $\theta_x:=\phi_{i0}(x)$ for any $x\in F_i$, $i\geq 1$ and $\theta_x:=\Phi_0(x)$ if $x\in F_\infty$.

\begin{remark}
It is worthwhile pointing out that, in view of the estimate~\eqref{E:BE_infty_01}, the constant $C$ appears to be independent of the parameter sequence $\mcN$ that gives the number of copies (``parallel universes'', as named in~\cite{BE04}) of a level that give rise to the next.
\end{remark}
\subsection{Key lemma} 
The following estimate will be applied several times throughout the proof of the steps that yield the main result. To be consistent with the notation in~\cite{AR18}, we write $L_i:=\pi/J_i$.
%

\begin{lemma}\label{L:basic_BE}
Let $i\geq 0$. For any $x,y\in F_i$ and $t>0$,
\begin{equation}\label{E:basic_BE}
\int_0^{L_i}J_i|p^{F_0}_{J_i^2t}(J_i\rho,J_i\theta_x)-p^{F_0}_{J_i^2t}(J_i\rho,J_i\theta_y)|\,d\rho\leq \min\Big\{\frac{2}{\sqrt{\pi t}},\Big(J_i+\frac{1}{2J_it}\Big)e^{-J_i^2t}\Big\}|\theta_x-\theta_y|.
\end{equation}
\end{lemma}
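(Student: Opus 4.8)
The plan is to reduce the whole statement, via the change of variables $u=J_i\theta$, to an $L^1$-in-space estimate for the heat kernel on the unit circle at the rescaled time $s:=J_i^2t$. Writing $a:=J_i\theta_x$ and $b:=J_i\theta_y$ (both in $[0,\pi]$), the substitution turns the left-hand side of~\eqref{E:basic_BE} into
\begin{equation*}
I:=\int_0^{\pi}\big|p^{F_0}_{s}(u,a)-p^{F_0}_{s}(u,b)\big|\,du,
\end{equation*}
and since $|a-b|=J_i|\theta_x-\theta_y|$ and $\sqrt{s}=J_i\sqrt t$, the two claimed bounds become respectively $I\le \tfrac{2}{\sqrt{\pi s}}\,|a-b|$ and $I\le\big(1+\tfrac1{2s}\big)e^{-s}\,|a-b|$. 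I would prove each of these separately and then take the minimum.

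For the exponential bound I would argue exactly as in the case $i=1$ of Proposition~\ref{P:Lip_F1}. Using the eigenfunction expansion $p^{F_0}_s(u,c)=\tfrac1{2\pi}+\tfrac1\pi\sum_{k\ge1}e^{-k^2s}\cos(k(u-c))$ together with the elementary bound $|\cos(k(u-a))-\cos(k(u-b))|\le k|a-b|$, and integrating the resulting series termwise over the interval $[0,\pi]$ of length $\pi$, one is left with $I\le |a-b|\sum_{k\ge1}ke^{-k^2s}$. The series is then controlled by the same integral comparison already used in Proposition~\ref{P:Lip_F1}, namely $\sum_{k\ge1}ke^{-k^2s}\le e^{-s}+\int_1^\infty \xi e^{-\xi^2 s}\,d\xi=(1+\tfrac1{2s})e^{-s}$. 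Undoing the substitution recovers the factor $\big(J_i+\tfrac1{2J_it}\big)e^{-J_i^2t}$.

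The more delicate half is the $s^{-1/2}$ bound: taking absolute values inside the Fourier series destroys the cancellation and only yields an $s^{-1}$ bound. Here I would instead use the Gaussian image representation of the circle heat kernel, $p^{F_0}_s(u,c)=\tfrac1{\sqrt{4\pi s}}\sum_{n\in\mathbb{Z}}e^{-(u-c-2\pi n)^2/4s}$ (as recorded in Appendix~\ref{S:Apx_HKcircle}), together with the fundamental theorem of calculus in the $c$-variable:
\begin{equation*}
I=\int_0^{\pi}\Big|\int_{b}^{a}\partial_c p^{F_0}_s(u,c)\,dc\Big|\,du\le |a-b|\,\sup_{c}\int_0^{2\pi}\big|\partial_c p^{F_0}_s(u,c)\big|\,du .
\end{equation*}
The key point is that, by Tonelli and the fact that the intervals $[0,2\pi]-c-2\pi n$ tile $\mathbb{R}$, the inner $L^1$-norm ``unfolds'' onto the line: $\int_0^{2\pi}|\partial_c p^{F_0}_s(u,c)|\,du\le \int_{\mathbb{R}}|G_s'(v)|\,dv$, where $G_s(v)=\tfrac1{\sqrt{4\pi s}}e^{-v^2/4s}$ is the Euclidean Gaussian. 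A direct computation gives $\int_{\mathbb{R}}|G_s'(v)|\,dv=\tfrac1{\sqrt{\pi s}}$, so $I\le \tfrac1{\sqrt{\pi s}}|a-b|\le\tfrac2{\sqrt{\pi s}}|a-b|$, with room to spare; translating back via $s=J_i^2t$ and $|a-b|=J_i|\theta_x-\theta_y|$ produces the factor $\tfrac{2}{\sqrt{\pi t}}$.

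Taking the smaller of the two constants then yields~\eqref{E:basic_BE}. I expect the $s^{-1/2}$ estimate to be the main obstacle: one must avoid the lossy termwise bound and instead exploit the $L^1$-contractivity of the spatial derivative of the Gaussian. The restriction of the integral to the half-circle $[0,\pi]$ is harmless, since I bound it by the full-circle integral, which is precisely what the unfolding argument controls uniformly in $c$.
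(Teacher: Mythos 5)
Your proposal is correct, and the two halves split neatly: the exponential bound $\big(J_i+\tfrac{1}{2J_it}\big)e^{-J_i^2t}$ is obtained exactly as in the paper (Fourier expansion, $|\cos(k(u-a))-\cos(k(u-b))|\le k|a-b|$, integral comparison for $\sum_k ke^{-k^2s}$), but your route to the $t^{-1/2}$ bound is genuinely different. The paper works directly with the Poisson-summed Gaussian representation, splits the sum into the $k=0$ term and the two one-sided tails $k\geq 1$, applies the fundamental theorem of calculus to the $k=0$ term and explicit sign/monotonicity arguments to the tails, arriving at $I_1\le 2|\theta_x-\theta_y|$ and $I_2,I_3\le|\theta_x-\theta_y|$, hence the constant $\tfrac{4}{\sqrt{4\pi t}}=\tfrac{2}{\sqrt{\pi t}}$. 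You instead differentiate in the source point, bound $I\le|a-b|\sup_c\|\partial_c p^{F_0}_s(\cdot,c)\|_{L^1}$, and unfold the periodized sum onto the line using that the translated fundamental domains tile $\mathbb{R}$, reducing everything to $\|G_s'\|_{L^1(\mathbb{R})}=\tfrac{1}{\sqrt{\pi s}}$. This is cleaner (no case analysis on the sign of $\theta-\rho\pm 2kL_i$), conceptually transparent (it is the $L^1$-contractivity of the periodization), and it actually yields the sharper constant $\tfrac{1}{\sqrt{\pi t}}$ in place of $\tfrac{2}{\sqrt{\pi t}}$, which would propagate to a factor-of-two improvement in Corollary~\ref{C:BE_interval} and in the constants of Proposition~\ref{T:BE_Fi} and Theorem~\ref{T:BE_Finfty}. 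The only points to make explicit in a write-up are the routine justification for differentiating the Gaussian series term by term (locally uniform convergence for $s>0$) and the Tonelli swap, both of which are unproblematic.
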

In view of the relation~\eqref{E:Dir_HKvsF0}, Lemma~\ref{L:basic_BE} readily implies another useful inequality.
\begin{corollary}\label{C:BE_interval}
Let $i\geq 1$. For any $f\in L^\infty(F_i)$, 
\begin{equation*}
\int_0^{L_i}|(p_t^{[0,L_i]_D}(\rho,\theta_x)-p^{[0,L_i]_D}_t(\rho,\theta_y))f(\rho)|\,d\rho\\
\leq 2\min\Big\{\frac{2}{\sqrt{\pi t}},\Big(J_i+\frac{1}{2J_it}\Big)e^{-J_i^2t}\Big\}\|f\|_\infty|\theta_x-\theta_y|.
\end{equation*}
\end{corollary}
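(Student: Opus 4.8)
The plan is to deduce the corollary directly from Lemma~\ref{L:basic_BE} by inserting the identity~\eqref{E:Dir_HKvsF0}, which expresses the Dirichlet heat kernel on $[0,L_i]$ through the circle heat kernel $p^{F_0}$ by a reflection at the endpoint. The first, harmless step is to bound $|f(\theta)|\le\|f\|_\infty$ pointwise and pull this constant out of the integral, so that it suffices to estimate
\begin{equation*}
\int_0^{L_i}|p_t^{[0,L_i]_D}(\theta,\theta_x)-p^{[0,L_i]_D}_t(\theta,\theta_y)|\,d\theta.
\end{equation*}

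Next I would substitute~\eqref{E:Dir_HKvsF0}, writing for $\bullet\in\{x,y\}$
\begin{equation*}
p_t^{[0,L_i]_D}(\theta,\theta_\bullet)=J_i\big(p^{F_0}_{J_i^2t}(J_i\theta,J_i\theta_\bullet)-p^{F_0}_{J_i^2t}(J_i\theta,-J_i\theta_\bullet)\big),
\end{equation*}
subtract, and separate the ``direct'' from the ``reflected'' contribution. The triangle inequality then gives
\begin{align*}
\int_0^{L_i}|p_t^{[0,L_i]_D}(\theta,\theta_x)-p^{[0,L_i]_D}_t(\theta,\theta_y)|\,d\theta
&\le \int_0^{L_i}J_i|p^{F_0}_{J_i^2t}(J_i\theta,J_i\theta_x)-p^{F_0}_{J_i^2t}(J_i\theta,J_i\theta_y)|\,d\theta\\
&\quad+\int_0^{L_i}J_i|p^{F_0}_{J_i^2t}(J_i\theta,-J_i\theta_x)-p^{F_0}_{J_i^2t}(J_i\theta,-J_i\theta_y)|\,d\theta.
\end{align*}
The first integral is verbatim the quantity controlled in Lemma~\ref{L:basic_BE}, hence at most $\min\{\frac{2}{\sqrt{\pi t}},(J_i+\frac{1}{2J_it})e^{-J_i^2t}\}|\theta_x-\theta_y|$.

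For the second integral I would apply Lemma~\ref{L:basic_BE} a second time, now to the reflected points $-\theta_x,-\theta_y$. This is the one step deserving care, and it is where I expect the only (mild) obstacle: one must check that the lemma genuinely applies to the reflected second argument. This is legitimate because the reflection $\theta\mapsto-\theta$ is an isometry of $F_0=\mbbS^1$, so $-\theta_x,-\theta_y$ are admissible angular coordinates, and the symmetry $p^{F_0}_s(\alpha,\beta)=p^{F_0}_s(-\alpha,-\beta)$ of the circle heat kernel recasts the integrand into exactly the form covered by the lemma. Since $|(-\theta_x)-(-\theta_y)|=|\theta_x-\theta_y|$, the second integral satisfies the identical bound. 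Adding the two equal estimates produces the factor $2$ in front of the minimum, and reinstating $\|f\|_\infty$ yields the asserted inequality; apart from Lemma~\ref{L:basic_BE} itself, nothing beyond the triangle inequality and the pointwise bound on $f$ is required.
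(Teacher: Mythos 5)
Your proposal is correct and is exactly the argument the paper intends: it states the corollary as an immediate consequence of Lemma~\ref{L:basic_BE} via the identity~\eqref{E:Dir_HKvsF0}, with the factor $2$ arising from applying the lemma to both the direct and the reflected term after the triangle inequality. Your extra remark on why the lemma applies to the reflected second argument (symmetry of $p^{F_0}$ under $\theta\mapsto-\theta$) is precisely the detail the paper leaves implicit, and it is handled correctly.
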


\begin{proof}[Proof of Lemma~\ref{L:basic_BE}]
The strategy consists in estimating the left hand side of~\eqref{E:basic_BE}, which we denote by $I$, with both representations of the heat kernel $p_t^{F_0}(x,y)$ given in~\eqref{E:PoissonFormula}. 
\begin{enumerate}[leftmargin=.25em,label=(\alph*),itemsep=1em,wide=0em]
\item Using the first representation in~\eqref{E:PoissonFormula},
\begin{equation}
I:=\frac{1}{\sqrt{4\pi t}}\int_0^{L_i}\Big|\sum_{k\in\mbbZ}\Big(e^{-\frac{(J_i\rho-J_i\theta_x-2\pi k)^2}{4J_i^2t}}-e^{-\frac{(J_i\rho-J_i\theta_y-2\pi k)^2}{4J_i^2t}}\Big)\Big|\,d\rho.
\end{equation}
Applying the triangle inequality,
\begin{align*}
&\sqrt{4\pi t} I\leq \int_0^{L_i}\sum_{k\in\mbbZ}\Big| e^{-\frac{(J_i\rho-J_i\theta_x-2\pi k)^2}{4J_i^2t}}-e^{-\frac{(J_i\rho-J_i\theta_y-2\pi k)^2}{4J_i^2t}}\Big|\,d\rho\nonumber\\
&=\int_0^{L_i}\Big| e^{-\frac{(J_i\rho-J_i\theta_x)^2}{4J_i^2t}}-e^{-\frac{(J_i\rho-J_i\theta_y)^2}{4J_i^2t}}\Big|\,d\rho
+ \int_0^{L_i}\sum_{k\geq 1}\Big| e^{-\frac{(J_i\rho-J_i\theta_x-2\pi k)^2}{4J_i^2t}}-e^{-\frac{(J_i\rho-J_i\theta_y-2\pi k)^2}{4J_i^2t}}\Big|\,d\rho\nonumber\\
&+\int_0^{L_i}\sum_{k\geq 1}\Big| e^{-\frac{(J_i\rho-J_i\theta_x+2\pi k)^2}{4J_i^2t}}-e^{-\frac{(J_i\rho-J_i\theta_y+2\pi k)^2}{4J_i^2t}}\Big|\,d\rho=:I_1+I_2+I_3.
\end{align*}
Without loss of generality, let us assume that $\theta_x\leq \theta_y$
. For the first integral term we have
\begin{align*}
I_1&\leq \int_0^{L_i}\Big|\int_{\theta_x}^{\theta_y}\frac{J_i\rho -J_i\tilde{\rho}}{2J_it}e^{-\frac{(J_i\rho-J_i\tilde{\rho})^2}{4J_i^2t}}d\tilde{\rho}\Big|\,d\rho\leq \int_0^{L_i}\int_{\theta_x}^{\theta_y}\frac{|\rho -\tilde{\rho}|}{2t}e^{-\frac{(\rho-\tilde{\rho})^2}{4t}}d\tilde{\rho}\,d\rho\\
&=\int_{\theta_x}^{\theta_y}\int_0^{\tilde{\rho}}-\frac{\rho -\tilde{\rho}}{2t}e^{-\frac{(\rho-\tilde{\rho})^2}{4t}}d\rho\,d\tilde{\rho}+\int_{\theta_x}^{\theta_y}\int_{\tilde{\rho}}^{L_i}\frac{\rho -\tilde{\rho}}{2t}e^{-\frac{(\rho-\tilde{\rho})^2}{4t}}d\rho\,d\tilde{\rho}\\
&=\int_{\theta_x}^{\theta_y}\Big[e^{-\frac{(\rho-\tilde{\rho})^2}{4t}}\Big]_0^{\tilde{\rho}} d\tilde{\rho}+\int_{\theta_x}^{\theta_y}\Big[-e^{-\frac{(\rho-\tilde{\rho})^2}{4t}}\Big]_{\tilde{\rho}}^{L_i} d\tilde{\rho}\\
&=\int_{\theta_x}^{\theta_y}\Big(1-e^{-\frac{\tilde{\rho}^2}{4t}}\Big)d\tilde{\rho}+\int_{\theta_x}^{\theta_y}\Big(1-e^{-\frac{(L_i-\tilde{\rho})^2}{4t}}\Big) d\tilde{\rho}\\
&\leq 2|\theta_y-\theta_x|.
\end{align*}
Moreover, recall that $\theta_x,\theta_y\in [0,L_i)$ for any $x,y\in F_i$. Hence, for any $k\geq 1$, $\rho\in [\theta_x, \theta_y]$ and $\tilde{\rho}\in [0,L_i)$, the quantity $\rho-\tilde{\rho}-2kL_i\leq L_i-2kL_i$ is nonpositive. Thus,
\begin{align*}
I_2&\leq \int_0^{L_i} \sum_{k\geq 1}\Big|\int_{\theta_x}^{\theta_y}\frac{2(J_i\rho -J_i\tilde{\rho}-2\pi k)}{4J_it}e^{-\frac{(J_i\rho-J_i\tilde{\rho}-2\pi k)^2}{4J_i^2t}}d\tilde{\rho}\Big|\,d\rho\\
&\leq \int_{\theta_x}^{\theta_y}\sum_{k\geq 1}\int_0^{L_i}\frac{|\rho -\tilde{\rho}-2\pi k/J_i|}{2t}e^{-\frac{(\rho-\tilde{\rho}-2\pi k/J_i)^2}{4t}}d\tilde{\rho}\,d\rho\\
&=\int_{\theta_x}^{\theta_y}\sum_{k\geq 1}\int_0^{L_i}-\frac{\rho -\tilde{\rho}-2kL_i }{2t}e^{-\frac{(\rho-\tilde{\rho}-2kL_i )^2}{4t}}d\tilde{\rho}\,d\rho\\
&\leq \int_{\theta_x}^{\theta_y}\int_0^{L_i}\int_0^\infty-\frac{\rho -\tilde{\rho}-\xi }{2t}e^{-\frac{(\rho-\tilde{\rho}-\xi )^2}{4t}}d\tilde{\rho}\,d\rho\\
&=\frac{1}{L_i}\int_{\theta_x}^{\theta_y}\int_0^{L_i}e^{-\frac{(\rho-\tilde{\rho})^2}{4t}}d\tilde{\rho}\,d\rho
\leq |\theta_x-\theta_y|
\end{align*}
Analogously, because $\rho-\tilde{\rho}+2kL_i\geq L_i+2kL_i>0$ for any $k\geq 1$, we obtain
\begin{equation*}
I_3\leq \int_{\theta_x}^{\theta_y}\int_0^{L_i}\sum_{k\geq 1}\frac{\rho -\tilde{\rho}+2kL_i }{2t}e^{-\frac{(\rho-\tilde{\rho}+2kL_i )^2}{4t}}d\tilde{\rho}\,d\rho\leq |\theta_x-\theta_y|.
\end{equation*}
Adding up these estimates leads to
\begin{equation}\label{E:basic_BE_help02}
I\leq \frac{1}{\sqrt{4\pi t}}(I_1+I_2+I_3)=\frac{2}{\sqrt{\pi t}}|\theta_x-\theta_y|.
\end{equation}
\item Using the second representation of the heat kernel $p_t^{F_0}(x,y)$ in~\eqref{E:PoissonFormula} we write
\begin{equation*}
I:=\frac{J_i}{\pi}\int_0^{L_i}\Big|\sum_{k\geq 1}e^{-k^2J_i^2t}\big(\cos(kJ_i(\theta_x-\rho))-\cos(kJ_i(\theta_y-\rho))\big)\Big|\,d\rho.
\end{equation*}
Then, since $L_i=\pi/J_i$ we have
\begin{align}
I&\leq \frac{J_i}{\pi}\int_0^{L_i}\sum_{k\geq 1}e^{-k^2J_i^2t}|\cos(kJ_i(\theta_x-\rho))-\cos(kJ_i(\theta_y-\rho))|\,d\rho\nonumber\\
&\leq \frac{J_i}{\pi}\int_0^{L_i}\sum_{k\geq 1}e^{-k^2J_i^2t}kJ_i |\theta_x-\theta_y|\,d\rho= \frac{J_iL_i}{\pi} |\theta_x-\theta_y|\sum_{k\geq 1}e^{-k^2J_i^2t}kJ_i\nonumber\\
&\leq |\theta_x-\theta_y|\Big(J_ie^{-J_i^2t}+\int_1^\infty J_i\xi e^{-\xi^2J_i^2t}d\xi\Big)\nonumber\\
&=\Big(J_ie^{-J_i^2t}+\frac{1}{2J_i t}e^{-J_i^2 t}\Big)|\theta_x-\theta_y|=e^{-J_i^2t}\Big(J_i+\frac{1}{2J_it}\Big)|\theta_x-\theta_y|.\label{E:basic_BE_help03}
\end{align}
\end{enumerate}
The assertion now follows from~\eqref{E:basic_BE_help02} and~\eqref{E:basic_BE_help03}.
\end{proof}

\subsection{First approximation level}
The weak Bakry-\'Emery condition~\eqref{E:BE_kappa} with $\kappa=1$ and $d_w=2$ is obtained on each finite approximation $F_i$ by an inductive argument and this paragraph is devoted to the first induction step. The ``chain property'' from Lemma~\ref{L:chain_cond} will allow us to reduce the analysis to pairs of points $x,y\in F_1$ that belong to the same branch, see Figure~\ref{F:PPcase_c}. Any notation appearing in the proof for the first time follows~\cite{AR18} and is briefly recalled in Appendix~\ref{S:Apx_AR18}.
\begin{proposition}\label{P:BE_F1}
For any $t>0$, $f\in L^\infty(F_1)$ and $x,y\in F_1$,
\begin{equation}\label{E:BE_F1}
|P_t^{F_1}f(x)-P_t^{F_1}f(y)|\leq C_1(t)\,d_1(x,y)\|f\|_\infty ,
\end{equation}
where
\begin{equation}\label{E:BE_C1}
C_1(t)\leq 2\Big(\min\Big\{\frac{2}{\sqrt{\pi t}},\Big(1+\frac{1}{2t}\Big)e^{-t}\Big\}+\min\Big\{\frac{2}{\sqrt{\pi t}},\Big(j_1+\frac{1}{2j_1 t}\Big)e^{-j_1^2t}\Big\}\Big).
\end{equation}
\end{proposition}

\begin{proof}
By virtue of the triangle inequality and~\cite[Proposition 3]{AR18}, see also Lemma~\ref{L:HSFA01}, we have (recall $\theta_x:=\phi_{i0}(x)$)
\begin{multline}\label{E:BEdelta_F1_help02}
|P_t^{F_1}f(x)-P_t^{F_1}f(y)|\leq|P_t^{F_0}(\mcI_1 f)(\theta_x)-P_t^{F_0}(\mcI_1 f)(\theta_y)|\\
+|P_t^{[0,L_1]_D}(\Pr_1^\bot f)|_{I_{\alpha_x}}(\theta_x)-P_t^{[0,L_1]_D}(\Pr_1^\bot f)|_{I_{\alpha_y}}(\theta_y)|=D_1+D_2.
\end{multline}
Let us first assume that  $x,y\in F_1$ belong to the same branch. Applying Lemma~\ref{L:basic_BE} with $L_0=2\pi$ and $J_0=1$ leads to
\begin{align*}
D_1&\leq\|\mcI_1 f\|_\infty\int_0^{L_0}|p_t^{F_0}(\theta,\theta_x)-p_t^{F_0}(\theta,\theta_y)|\,d\theta
\leq \min\Big\{\frac{2}{\sqrt{\pi t}},\Big(1+\frac{1}{2t}\Big)e^{-t}\Big\}\|f\|_\infty|\theta_x-\theta_y|.
\end{align*}
Notice that $\alpha_x=\alpha_y$ because $x$ and $y$ belong to the same branch. By virtue of Corollary~\ref{C:BE_interval} we thus have
\begin{align*}
D_2&= |P_t^{[0,L_1]_D}(\Pr_1^\bot f)_{\alpha_x}(\theta_x)-P_t^{[0,L_1]_D}(\Pr_1^\bot f)|_{I_{\alpha_x}}(\theta_y)|\\
&\leq \int_0^{L_1}|(\Pr_1^\bot f)_{\alpha_x}(\theta)||p_t^{[0,L_1]_D}(\theta,\theta_x)-p_t^{[0,L_1]_D}(\theta,\theta_y)|\,d\theta\\
&\leq 2\min\Big\{\frac{2}{\sqrt{\pi t}},\Big(J_1+\frac{1}{2J_1t}\Big)e^{-J_1^2t}\Big\} \|\Pr_1^\bot f\|_\infty|\theta_x-\theta_y|\\
&\leq 2\min\Big\{\frac{2}{\sqrt{\pi t}},\Big(J_1+\frac{1}{2J_1t}\Big)e^{-J_1^2t}\Big\} \| f\|_\infty d_1(x,y).
\end{align*}
Putting both estimates together we get
\begin{equation*}\label{E:BE_F1_same_branch}
|P_t^{F_1}f(x)-P_t^{F_1}f(y)|\leq C_1(t)|\theta_x-\theta_y|\| f\|_\infty =C_1(t)d_1(x,y)\| f\|_\infty 
\end{equation*}
with $C_1(t)$ as in~\eqref{E:BE_F1}, where the last equality holds because $x$ and $y$ belong to the same branch and hence $|\theta_x-\theta_y|=d_0(\phi_1(x),\phi_1(y))=d_1(x,y)$.
Suppose now that $x$ and $y$ belong to different branches. By construction, there exists a sequence of nodes $x_1,\ldots, z_N\in B_1$ with $1\leq N\leq J_1$ that joins them and such that $d_1(x,y)=d_1(x,z_1)+\sum_{\ell=1}^{N-1}d_1(x_\ell,x_{\ell+1})+d_1(z_N,y)$. 
Each pair of points in the summands belong to the same branch, hence applying the triangle inequality and estimating each term as in the previous case 
the assertion follows.
\end{proof}
\subsection{Generic approximation level}
Also here the recursive nature of the construction of $F_i$ is reflected in the proof of the weak Bakry-\'Emery condition for an arbitrary level. 

\begin{proposition}\label{T:BE_Fi}
Let $i\geq 1$. For any $t>0$, $f\in L^\infty(F_i)$ and $x,y\in F_i$,
\begin{equation}\label{E:BE_Fi}
|P_t^{F_i}f(x)-P_t^{F_i}f(y)|\leq 
2\sum_{\ell=0}^i\min\Big\{\frac{2}{\sqrt{\pi t}},\Big(J_\ell+\frac{1}{2J_\ell t}\Big)e^{-J_\ell^2t}\Big\}
d_i(x,y) \|f\|_\infty,
\end{equation}
\end{proposition}
\begin{proof}
Let $i\geq 2$. With the notation from~\eqref{E:def_fibers}, applying~\cite[Proposition 3]{AR18}, see also Remark~\ref{R:Pr_vs_fiber}, and the triangle inequality yields
\begin{multline}\label{E:BE_Fi_help01}
|P_t^{F_i}f(x)-P_t^{F_i}f(y)|\leq|P_t^{F_{i-1}}(\mcI_i f)(\phi_i(x))-P_t^{F_{i-1}}(\mcI_i f)(\phi_i(y))|\\
+|P_t^{[0,L_i]_D}(\Pr_i^\bot f)|_{I_{\alpha_x}}(\theta_x)-P_t^{[0,L_i]_D}(\Pr_i^\bot f)_{I_{\alpha_y}}(\theta_y)|=D_{i,1}+D_{i,2}.
\end{multline}
To estimate these terms, let us assume first that $x,y\in F_i$ belong to the same branch. 
By hypothesis of induction, there is a constant $C_{i-1}(t)>0$ such that
\begin{equation}\label{E:BE_Fi_help03}
D_{i,1}\leq C_{i-1}(t)\|f\|_\infty \,d_{i-1}(\phi_i(x),\phi_i(y))= C_{i-1}(t)\,d_i(x,y) \|f\|_\infty , 
\end{equation}
where the last equality is due to the fact that 
\begin{equation}\label{E:BE_Fi_help02}
d_i(x,y)=d_{i-1}(\phi_i(x),\phi_i(y))=|\theta_x-\theta_y|.
\end{equation} 
when $x$ and $y$ are in the same branch.

On the other hand, $\alpha_x=\alpha_y$ because $x$ and $y$ belong to the same branch, hence Corollary~\ref{C:BE_interval} and~\eqref{E:BE_Fi_help02} yield 
\begin{align*}
D_{i,2}&= |P_t^{[0,L_i]_D}(\Pr_i^\bot f)|_{I_{\alpha_x}}(\theta_x)-P_t^{[0,L_i]_D}(\Pr_i^\bot f)|_{I_{\alpha_x}}(\theta_y)|\\
&\leq \int_0^{L_1}|(\Pr_i^\bot f)_{\alpha_x}(\theta)||p_t^{[0,L_i]_D}(\theta,\theta_x)-p_t^{[0,L_i]_D}(\theta,\theta_y)|\,d\theta\\
&\leq 2\min\Big\{\frac{2}{\sqrt{\pi t}},\Big(J_i+\frac{1}{2J_i t}\Big)e^{-J_i^2t}\Big\}\|f\|_\infty|\theta_x-\theta_y|\\
&= 2\min\Big\{\frac{2}{\sqrt{\pi t}},\Big(J_i+\frac{1}{2J_i t}\Big)e^{-J_i^2t}\Big\}\|f\|_\infty d_i(x,y).
\end{align*}
Putting both estimates together we obtain
\begin{equation}\label{E:BE_Fi_same_branch}
|P_t^{F_i}f(x)-P_t^{F_i}f(y)|\leq\Big(C_{i-1}(t)+2\min\Big\{\frac{2}{\sqrt{\pi t}},\Big(J_i+\frac{1}{2J_i t}\Big)e^{-J_i^2t}\Big\}\Big)\|f\|_\infty d_i(x,y)
\end{equation}
for $x$ and $y$ in the same branch.
Suppose now that $x,y\in F_i$ belong to different branches. By construction, see Lemma~\ref{L:chain_cond}, we find a sequence of identification points $z_1,\ldots,z_{N_{xy}}\in B_i$ connecting the two branches so that $d_i(x,y)=d_i(x,z_1)+d_i(z_1,z_2)+\ldots+d_i(z_{N_{xy}},y)$. Moreover, the points in each pair can be regarded as belonging to the same branch. Thus, applying the triangle inequality and the previous computations to each of the terms yields again~\eqref{E:BE_Fi_same_branch}.
Finally, solving the recursive inequality $C_i(t)\leq C_{i-1}(t)+2\min\Big\{\frac{2}{\sqrt{\pi t}},\Big(J_i+\frac{1}{2J_i t}\Big)e^{-J_i^2t}\Big\}$ with $C_1(t)$ as in~\eqref{E:BE_C1} gives~\eqref{E:BE_Fi}.
\end{proof}

\subsection{Continuity estimates in the limit. Proof of Theorem~\ref{T:BE_Finfty}}
We are now ready to apply Proposition~\ref{T:BE_Fi} to obtain the estimate~\eqref{E:BE_Finfty}. Once more we see the important role that the intertwining property~\eqref{E:Intertwin} plays in order to ``pass to the limit''. Notice that, by virtue of Proposition~\ref{T:DF_props}, it suffices to prove the statement for functions $f\in\mcC_0=\bigcup_{i\geq 0}\Phi_i^*C(F_i)$.

\medskip

Let $x,y\in F_\infty$ and $f\in \mcC_0$, for instance $f\in \Phi_i^*C(F_i)$ for some $i\geq 0$. This means that there exists $h\in C(F_i)$ so that $f=h{\circ}\Phi_i$. 
By virtue of Lemma~\ref{L:Intertwin} and Theorem~\ref{T:BE_Fi} we have
\begin{align*}
|P^{F_\infty}_tf(x)-P^{F_\infty}_tf(y)|&=
|P^{F_\infty}_t\Phi_i^*h(x)-P^{F_\infty}_t\Phi_i^*h(y)|=|\Phi_i^*P_t^{F_i}h(x)-\Phi_i^*P_t^{F_i}h(y)|\nonumber\\
&=|P_t^{F_i}h(\Phi_i(x))-P_t^{F_i}h(\Phi_i(y))|\leq C_i(t)\,d_i(\Phi_i(x),\Phi_i(y))\, \|h\|_\infty .
\end{align*}
Since $d_\infty(x,y)=\lim_{i\to\infty}d_i(\Phi_i(x),\Phi_i(y))$, c.f.~\eqref{E:def_dist}, letting $i\to\infty$ yields
\begin{equation*}
|P^{F_\infty}_tf(x)-P^{F_\infty}_t f(y)|\leq C(t) \,d_\infty(x,y)\,\|f\|_\infty,
\end{equation*}
with
\begin{equation}\label{E:BE_infty_01}
C(t)\leq 2\sum_{\ell=0}^\infty\min\Big\{\frac{2}{\sqrt{\pi t}},\Big(J_\ell+\frac{1}{2J_\ell t}\Big)e^{-J_\ell^2t}\Big\}.
\end{equation}
To estimate the series on the right hand side, we notice that $J_{\ell_t^*}^{-2}\leq t <J_{\ell_t^*-1}^{-2}$ 
and split the series into the three terms
\begin{equation}\label{E:BE_infty_03}
\sum_{\ell=0}^{\ell_t^*-1}\frac{2}{\sqrt{\pi t}}+\frac{1}{\sqrt{t}}\sum_{\ell=\ell_t^*}^\infty J_\ell\sqrt{t} e^{-J_\ell^2 t}+\frac{1}{\sqrt{t}}\sum_{\ell=\ell_t^*}^\infty \frac{1}{J_\ell\sqrt{t}} e^{-J_\ell^2 t}=:\frac{2\ell_t^*}{\sqrt{\pi t}}+\frac{1}{\sqrt{t}}S_1+\frac{1}{\sqrt{t}}S_2.
\end{equation}
For the first series, analogous arguments as~\eqref{E:LipFinfty_01} give
\begin{equation*}
S_1=J_{\ell_{t^*-1}}\sqrt{t}e^{-J_{\ell_t^*}^2t}\sum_{\ell=\ell_t^*}^\infty J_{\ell_t^*,\ell}^2e^{-J_\ell^2 t}\leq \sum_{\ell=\ell_t^*}^\infty J_{\ell_t^*,\ell}^2e^{-J_{\ell_t^*+1,\ell}^2}\leq \sum_{\ell=\ell_t^*}^\infty N_{\ell_t^*}J_{\ell_t^*,\ell}^2e^{-J_{\ell_t^*+1,\ell}^2}
\end{equation*}
which is uniformly bounded independent of $t$ by Assumption~\ref{A:NJ_ass2}. For the second series,
\begin{align*}
S_2&=\sum_{\ell=\ell_t^*}^\infty\frac{1}{J_\ell\sqrt{t}}e^{-J_\ell^2t}\leq \sum_{\ell=\ell_t^*}^\infty e^{-J_{\ell_t^*+1,\ell}^2}
\end{align*}
which is finite by definition of $J_\ell$ and in particular independent of $t$. The claim now follows from~\eqref{E:BE_infty_01} and~\eqref{E:BE_infty_03}.
\subsection{Regular case}
As far as computations allow for regular diamond fractals, the estimates obtained in Theorem~\ref{T:BE_Fi} and Theorem~\ref{T:BE_Finfty} provide local continuity estimates with a logarithmic correction. 

\begin{theorem}\label{T:BE_reg_log}
For a regular diamond fractal  $F_\infty$ with parameters $n,j\geq 2$, there exists $C_{j}>0$ such that 
\begin{equation}\label{E:BE_reg_log}
|P^{F_\infty}_tf(x)-P^{F_\infty}_t f(y)|\leq \frac{C_{j}}{\sqrt{t}} (1+|\log t|)d_\infty(x,y)\|f\|_\infty 
\end{equation}
for any $f\in L^\infty(F_\infty)$, $x,y\in F_\infty$ and $0<t<1$.
\end{theorem}

\begin{proof}
To simplify constants which do not depend on $J_\ell$, $N_\ell$ or $t$, we will estimate the quantity $C_i(t)$ appearing in~\eqref{E:BE_Fi} by
\begin{equation}\label{E:BE_reg_C}
C_i(t)\leq 2\sum_{\ell=0}^i\min\Big\{\frac{1}{\sqrt{t}},\Big(J_\ell+\frac{1}{J_\ell t}\Big)e^{-J_\ell^2 t}\Big\} 
\end{equation}
for any $i=1,\ldots\infty$. Since in the regular case $J_\ell=j^\ell$, we have $j^{-2\ell_t^*}\leq t<j^{-2(\ell_t^*-1)}$, that is
\begin{align*}
\ell_t^*-1\leq\bigg|\frac{\log\sqrt{t}}{\log j}\bigg|<\ell_t^*.
\end{align*}
Estimating the terms in~\eqref{E:BE_infty_03} gives
\begin{equation*}
S_1\leq \sum_{\ell=\ell_t^*}j^{\ell-\ell_t^*}e^{-j^{2(\ell-\ell_t^*-1)}}
\leq 1+j+\int_0^\infty j^\xi e^{-j^{2\xi}}=1+j+\frac{\Gamma(1/2)}{2\log j}
\end{equation*}
and
\begin{equation*}
S_2\leq \sum_{\ell=\ell_t^*}^\infty e^{-j^{2(\ell-\ell_t^*-1)}}=e^{-\frac{1}{j^2}}+1+e^{-j^2}+\sum_{k=2}^\infty e^{-\frac{1}{j^{2}}}\leq 3+\int_1^\infty e^{-j^{2\xi}}d\xi
\leq 3+\frac{\sqrt{\pi}}{j\log j}
\end{equation*}
and we conclude from~\eqref{E:BE_infty_03} the bound
\begin{equation*}
C(t)\leq \frac{1}{\sqrt{t}}\Big(\frac{2}{\sqrt{\pi}}\ell_*+j+4+\frac{\Gamma(1/2)}{2 \log j}+\frac{\sqrt{\pi}}{j\log j}\Big)\leq \frac{C_j}{\sqrt{t}}(1+|\log t|)
\end{equation*}
that is in particular independent of $n$.
\end{proof}
\section{Applications in functional inequalities. Overview}\label{S:ineqs}
The estimates obtained in previous sections allow to analyze many other functional inequalities to further investigate the properties of the diffusion process on a generalized diamond fractal. In this section we formulate some of these and outline the main ideas to prove them.

\subsection{Ultracontractivity}
Among the different (equivalent) formulations of this property that can be found in the literature~\cite{CKS87,Dav90,Kig09,BGL14}, we consider here that of Davies~\cite[Chapter 2]{Dav90}: the semigroup $\{P^{F_\infty}_t\}_{t\geq 0}$ is contractive if it is a bounded operator from $L^2(F_\infty,\mu_\infty)$ to $L^\infty(F_\infty)$ for all $t>0$. A direct application of the estimate from Theorem~\ref{T:supHK}, and in particular that for short times in Corollary~\ref{C:supHK} leads to the desired statement.

\begin{theorem}\label{T:Ultracontractivity}
There exists $C_{\mathcal{N},\mathcal{J}}>0$ such that 
\begin{equation}\label{E:Ultracontractivity}
\|P^{F_\infty}_t\|_{2\to\infty}\leq C_{\mathcal{N},\mathcal{J}}\, t^{-\frac{1}{2}(1+d(\ell_t^*))}
\end{equation}
for any $0<t<1$.
\end{theorem}
Similarly, Corollary~\ref{C:HKsup_regular} can be applied to deduce the result in the regular case, that reads
\begin{equation*}
\|P^{F_\infty}_t\|_{2\to\infty}\leq C_{j,n}\, t^{-\frac{1}{2}(1+\frac{\log{n}}{\log j})}
\end{equation*}
with an explicit constant. Again, the spectral dimension $d_S=1+\frac{\log{n}}{\log j}$ appears in the exponent and we recover~\cite[Proposition 4.9]{HK10} without using Poincar\'e inequality.

\subsection{Poincar\'e inequality}
The ultracontractivity proved in Theorem~\ref{T:Ultracontractivity} can be applied to adapt the argument from~\cite[Proposition 4.8]{HK10} and prove a global Poincar\'e inequality in the present general (non self-similar) framework. Further inequalities of this type that require a notion of gradient, as for instance the weak (1-1) Poincar\'e inequality studied in~\cite{Laa00}, are left to be the subject of future investigations.
\begin{theorem}\label{T:uniform_PI}
A diamond fractal $F_\infty$ with parameters $\mcJ$ and $\mcN$ satisfies the uniform global
Poincar\'e inequality
\begin{equation}\label{E:uniform_PI}
\int_{F_\infty}|f-\overline{f}\,|^2\,d\mu_\infty\leq \mcE^{F_\infty}(f,f)
\end{equation}
for any $f\in\mcF^{F_\infty}$, where $\overline{f}=\frac{1}{2\pi}\int_{F_\infty}f\,d\mu_\infty$.
\end{theorem}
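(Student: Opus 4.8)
The plan is to exploit the spectral-gap interpretation of the global Poincaré inequality together with the explicit structure of the Dirichlet form developed in Section~\ref{S:igDF}, in particular the generalized Mosco convergence and the identity $\mcE^{F_\infty}(\Phi_i^*h,\Phi_i^*h)=\mcE^{F_i}(h,h)$ from Theorem~\ref{T:DF_props}\ref{P:DF_props_3}. The inequality~\eqref{E:uniform_PI} with constant exactly $1$ is equivalent to asserting that the smallest nonzero eigenvalue $\lambda_1$ of the generator $-L_{F_\infty}$ satisfies $\lambda_1\geq 1$; indeed, writing $f=\overline{f}+g$ with $\int_{F_\infty}g\,d\mu_\infty=0$, the left-hand side is $\|g\|_{L^2}^2$ and the variational characterization gives $\mcE^{F_\infty}(g,g)\geq \lambda_1\|g\|_{L^2}^2$. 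So the whole task reduces to showing $\lambda_1\geq 1$.

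First I would reduce to the finite approximations. Using Proposition~\ref{P:Ana_props}(ii) that $\mcC_0=\bigcup_{i\geq 0}\Phi_i^*C(F_i)$ is dense, and the form identity $\mcE^{F_\infty}(\Phi_i^*h,\Phi_i^*h)=\mcE^{F_i}(h,h)$ together with the isometry property $\|\Phi_i^*h\|_{L^2(F_\infty)}=\|h\|_{L^2(F_i)}$ and the fact that $\Phi_i^*$ sends mean-zero functions to mean-zero functions (since $\Phi_i$ is measure-preserving), it suffices to prove the Poincaré inequality with constant $1$ on each graph $F_i$. That is, I would show $\int_{F_i}|h-\overline h|^2\,d\mu_i\leq \mcE^{F_i}(h,h)$ for every $h\in\mcF^{F_i}$, and then pass to the limit via the density argument exactly as in the proof of Theorem~\ref{T:DF_props}(iv).

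Next I would establish the finite-level estimate. Each $F_i$ is a metric measure graph whose branches are intervals of length $L_i=\pi/J_i$, and $\mcE^{F_i}$ is the standard quadratic form $\int |h'|^2$ of the quantum-graph Laplacian. The key point is to bound the spectral gap of $F_i$ from below by $1$, uniformly in $i$. The largest branch length occurring at any level is $L_0$, corresponding to $F_0=\mbbS^1$ of total circumference $2\pi$, whose Laplacian has first nonzero eigenvalue equal to $1$; every refinement subdivides branches and imposes additional identification (Dirichlet-type) constraints at the new junction points in $B_i$, which can only raise eigenvalues. I would make this precise by invoking the Dirichlet–Neumann bracketing / domain-monotonicity principle: the spectrum of $F_i$ dominates that of $F_0$ because $F_i$ is obtained from $F_0$ by adding vertices and edges in a way that refines the form domain. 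Concretely, the eigenfunctions on $F_i$ restricted to any maximal geodesic loop pulling back to $\mbbS^1$ satisfy the same Rayleigh quotient bound, so $\lambda_1(F_i)\geq\lambda_1(F_0)=1$.

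The main obstacle will be making the monotonicity argument rigorous in the non-self-similar, variable-branch-length setting, since one cannot simply invoke self-similar scaling as in~\cite{HK10}. I expect the cleanest route is to use the Mosco convergence from Theorem~\ref{T:DF_props}(i): if each $(\mcE^{F_i},\mcF^{F_i})$ satisfies the Poincaré inequality with a constant $C_i\leq 1$, then the (M1) lower-semicontinuity condition transfers the bound to the limit form, because for mean-zero $f$ with recovery sequence $\Phi_i^*f_i\to f$ one has $\|f\|_{L^2(F_\infty)}^2=\lim_i\|f_i\|_{L^2(F_i)}^2\leq\liminf_i\mcE^{F_i}(f_i,f_i)$, and (M1) bounds $\mcE^{F_\infty}(f,f)$ below by this liminf. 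Thus the delicate step is verifying the uniform bound $C_i\leq 1$ at each finite level, which I would anchor at $F_0$ and propagate by the branch-subdivision/identification argument; the optimality of the constant (claimed in the paragraph preceding the theorem) follows from the constant test function pulled back from the $\lambda_1=1$ eigenfunction on $F_0$, which saturates the inequality.
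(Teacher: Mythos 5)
Your route is genuinely different from the paper's. The paper deduces the existence of a spectral gap from ultracontractivity (compactness of $P^{F_\infty}_t$, hence empty essential spectrum of $L_{F_\infty}$), removes the mean using conservativeness ($\mcE^{F_\infty}(\bm{1},h)=0$) and polarization, and then identifies $\lambda_1=1$ by citing spectral convergence results for the approximating forms; your plan of proving the constant-$1$ Poincar\'e inequality on each $F_i$ and passing to the limit through $\mcD_0$ is precisely the alternative the author alludes to in Remark~\ref{R:Kigami}. The limit passage is fine if you run it through Theorem~\ref{T:DF_props}(iv) (recovery sequences with convergent energies) together with the measure-preserving property of $\Phi_i$; note, however, that the Mosco condition you invoke points the wrong way: (M1) gives $\mcE^{F_\infty}(f,f)\leq\liminf_i\mcE^{F_i}(f_i,f_i)$, an \emph{upper} bound on the limit energy, which cannot be combined with $\|f\|_{L^2}^2\leq\liminf_i\mcE^{F_i}(f_i,f_i)$ to produce $\|f\|_{L^2}^2\leq\mcE^{F_\infty}(f,f)$. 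What you need is the (M2)/recovery-sequence direction, $\limsup_i\mcE^{F_i}(f_i,f_i)\leq\mcE^{F_\infty}(f,f)$, which is exactly what Theorem~\ref{T:DF_props}(iv) supplies.

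The genuine gap is your justification of the finite-level bound $\lambda_1(F_i)\geq 1$. The principle you invoke --- that passing from $F_{i-1}$ to $F_i$ ``adds constraints'' and therefore ``can only raise eigenvalues'' --- is false for quantum graphs in general: the refinement glues in \emph{additional parallel edges}, which enlarges the space rather than restricting the form domain, and adding edges can lower the spectral gap. Likewise, restricting an eigenfunction of $F_i$ to a loop isometric to $\mbbS^1$ does not control its Rayleigh quotient: the restriction need not have mean zero on the loop, and the $L^2$ mass discarded by the restriction sits on the wrong side of the quotient. What actually makes the claim true is the special parallel-bundle structure, and the paper already provides the needed tool in Appendix~\ref{S:Apx_AR18}: decompose $L^2(F_i,\mu_i)=L^2_{\sym}(F_i,\mu_i)\oplus L^2_{\asym}(F_i,\mu_i)$. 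This decomposition is orthogonal both in $L^2$ and in energy (summing $\int f_\alpha' g_\alpha'$ over the branches of a bundle kills the cross terms because the antisymmetric components sum to zero over each bundle). On the symmetric part the form is unitarily equivalent to $\mcE^{F_{i-1}}$, so induction applies; every function in the antisymmetric part vanishes on $B_i$, hence on each branch it satisfies Dirichlet conditions on an interval of length $L_i=\pi/J_i$, whose first Dirichlet eigenvalue is $J_i^2\geq 4$. This yields $\lambda_1(F_i)\geq\min\{\lambda_1(F_{i-1}),J_i^2\}=\lambda_1(F_0)=1$. Without this (or an equivalent) argument the central step of your proof is unsupported; with it, your approach works and, as in the paper, the constant $1$ is optimal because it is attained by lifting a first eigenfunction of the circle $F_0$.
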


Since the space $(F_\infty,d_\infty)$ is compact and has finite measure, ultracontractivity implies compactness of the semigroup $P^{F_\infty}_t$ on $L^p(F_\infty,\mu_\infty)$ for any $1\leq p\leq \infty$ and $t>0$ (see e.g.~\cite[Theorem 2.1.5]{Dav90}). This can be used to deduce the existence of spectral gap~\cite[Theorem A.6.4]{BGL14} and follow~\cite[Proposition 4.8]{HK10} to obtain~\eqref{E:uniform_PI}. The lowest non-zero eigenvalue corresponds to the eigenvalue of the infinitesimal operator $L_{F_0}$, that is 
the Laplacian on the circle $F_0$; see e.g.~\cite[Proposition 2.5]{KS03} and also~\cite[Theorem 5.3]{ST12}. 

\subsection{Logarithmic Sobolev inequality}
This inequality provides 
information about the (exponential) convergence to the equilibrium of the diffusion process in terms of the \textit{entropy}, given by the expression on the left hand side of~\eqref{E:logSob}; its relation to the \textit{hypercontractivity} of the heat semigroup was showed in~\cite{Gro78}. 
\begin{theorem}\label{T:logSob}
For any non-negative function $f\in \mcF^{F_\infty}\cap L^1(F_\infty,\mu_\infty)\cap L^\infty(F_\infty)$ it holds that $f^2\log f\in L^1(F_\infty,\mu_\infty)$ and there exists $M_{\mcN\!,\mcJ}>0$ such that
\begin{equation}\label{E:logSob}
\int_{F_\infty}f^2\log f^2\,d\mu_\infty-\int_{F_\infty}f^2\,d\mu_\infty\,\log\Big(\int_{F_\infty}f\,d\mu_\infty\Big)\leq M_{\mcN\!,\mcJ}\,\mcE^{F_\infty}(f,f).
\end{equation}
\end{theorem}
One can use the estimate from Theorem~\ref{T:supHK} and classical arguments from~\cite[Theorem 2.2.3]{Dav90} to prove a defective Sobolev inequality, see e.g.~\cite[Proposition 5.1.3]{BGL14}, which by virtue of 
Theorem~\ref{T:uniform_PI} implies the logarithmic Sobolev inequality.

\section{Appendix. Functional framework}\label{S:Apx_AR18}
For the sake of completeness, this section briefly summarizes some facts obtained in~\cite{AR18} that are mentioned in some of the proofs, especially in that of Theorem~\ref{T:BE_Finfty}.

\medskip

For each $i\geq 0$, let $L^2(F_i,\mu_i)$ denote the space of square integrable functions on $F_i$. For any $i\geq 1$, we decompose this space into
\begin{equation*}\label{eq:FS.FA01}
L^2(F_i,\mu_i)=L^2_{\sym}(F_i,\mu_i)\oplus L^2_{\asym}(F_i,\mu_i),
\end{equation*}
where $L^2_{\sym}(F_i,\mu_i)$ denotes the invariant subspace of $L^2(F_i,\mu_i)$ under the action of the symmetric group $S(n_i)^{2j_i}$. 

\begin{definition}\label{def:FS.FA01}
Let $i\geq 1$. Define the projection operator $\Pr_i\colon C(F_i)\to L^2_{\sym}(F_i,\mu_i)\cap C(F_i)$  by
\begin{equation}\label{eq:FS.FA02}
\Pr_i f(x)=\begin{cases}
\frac{1}{n_i}\sum\limits_{w=1}^{n_i}f(\phi_i(x)w)&\text{if }x\in F_i\setminus B_i,\\
f(x)&\text{if }x\in B_i.
\end{cases}
\end{equation}
The orthogonal complement operator, $\Pr_i^\bot\colon C(F_i)\to L^2_{\asym}(F_i,\mu_i)\cap C(F_i)$, is defined as 
\begin{equation*}\label{eq:FS.FA03}
\Pr_i^\bot f(x)=f(x)-\Pr_i f(x).
\end{equation*}
Analogous formal definitions of these operators applies to bounded Borel functions.
\end{definition}
\begin{remark}\label{R:Pr_vs_fiber}
The projection $\Pr_i$ is related to the so-called integration over fibers in~\cite{CK15},  $\mathcal{I}_{\mcD_i}\colon C(F_i)\to C(F_{i-1})$ which in this case has the expression
\begin{equation}\label{E:def_fibers}
\mathcal{I}_if(x):=\mathcal{I}_{\mathcal{D}_i} f(x)=\frac{1}{n_i}\sum_{w=1}^{n_i}f(xw).
\end{equation}
Thus, for any $f\in C(F_i)$, 
\begin{equation*}
\Pr_i f(x)=\phi_i^*\mathcal{I}_i f(x).
\end{equation*}
\end{remark}
With the latter notation, the semigroups $\{P_t^{F_i}\}_{t\geq 0}$ admit the following decomposition, that is the $L^2$-version of~\cite[Lemma 2, Section 5]{AR18}.
\begin{lemma}\label{L:HSFA01}
For any $i\geq 1$, $t\geq 0$, $f\in\mcB_b(F_i)$ and any fixed $x\in F_i$ it holds that
\begin{equation*}\label{eq:HSFA04}
P^{F_i}_tf(x)=P_t^{F_{i-1}}(\mcI_i f)(\phi_i(x))+P_t^{[0,L_i]_D}(\Pr_i^\bot\! f)|_{I_{\alpha_x}}(\phi_{i0}(x)),
\end{equation*}
where $I_{\alpha_x}$ denotes the branch in $F_i$ where $x$ belongs to.
\end{lemma}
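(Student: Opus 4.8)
The plan is to derive the identity by a direct computation from the explicit heat kernel of Theorem~\ref{T:HK_Finfty}, and then to promote it to an $L^2$-statement by density. Writing $P^{F_i}_tf(x)=\int_{F_i}p^{F_i}_t(x,y)f(y)\,\mu_i(dy)$ and using the relation~\eqref{E:Dir_HKvsF0} to replace the factor $J_i\big(p^{F_0}_{J_i^2t}(J_i\theta_x,J_i\theta_y)-p^{F_0}_{J_i^2t}(J_i\theta_x,-J_i\theta_y)\big)$ by the Dirichlet kernel $p_t^{[0,L_i]_D}(\theta_x,\theta_y)$, the three-case formula of Theorem~\ref{T:HK_Finfty} takes the shape $p^{F_i}_t(x,y)=p^{F_{i-1}}_t(\phi_i(x),\phi_i(y))+r_i(x,y)$, where the remainder $r_i(x,y)$ vanishes unless $x$ and $y$ lie in the same bundle, equals $+N_{i-1}(n_i-1)\,p_t^{[0,L_i]_D}(\theta_x,\theta_y)$ when $y$ lies on the branch $I_{\alpha_x}$ of $x$, and equals $-N_{i-1}\,p_t^{[0,L_i]_D}(\theta_x,\theta_y)$ when $y$ lies on one of the remaining $n_i-1$ branches of that bundle. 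I would split the integral accordingly into a common term and a remainder term.

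For the common term I would integrate $p^{F_{i-1}}_t(\phi_i(x),\phi_i(y))$ against $f$ over all of $F_i$ and disintegrate along the fibres of $\phi_i$. Since $\phi_i^{-1}(z)=\{zw:w\in[n_i]\}$ and the mass-redistribution defining $\mu_i$ gives $(\phi_i)_*\mu_i=\mu_{i-1}$ with uniform conditional measure on the fibres, one has $\int_{F_i}g(\phi_i(y))f(y)\,\mu_i(dy)=\int_{F_{i-1}}g(z)\,(\mcI_if)(z)\,\mu_{i-1}(dz)$ for every $g$ on $F_{i-1}$, with $\mcI_if(z)=\tfrac1{n_i}\sum_{w=1}^{n_i}f(zw)$ as in the Remark following Definition~\ref{def:FS.FA01}. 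Applying this with $g=p^{F_{i-1}}_t(\phi_i(x),\cdot)$ turns the common term into exactly $P_t^{F_{i-1}}(\mcI_if)(\phi_i(x))$, the first summand of the claim.

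The remainder term is supported on the single bundle of $x$. Parametrising each of its $n_i$ branches by $[0,L_i]$ and recalling that $\mu_i$ restricted to a branch is $\tfrac1{N_i}\,d\theta$, the sum of the two remainder cases equals $\tfrac{N_{i-1}}{N_i}\int_0^{L_i}p_t^{[0,L_i]_D}(\theta_x,\theta)\big[(n_i-1)f_{\alpha_x}(\theta)-\sum_{\beta\neq\alpha_x}f_\beta(\theta)\big]\,d\theta$, where $f_\beta$ denotes $f$ read on the branch $\beta$ of the bundle. The bracket collapses to $n_i\big(f_{\alpha_x}(\theta)-\tfrac1{n_i}\sum_\beta f_\beta(\theta)\big)=n_i\,(\Pr_i^\bot f)_{\alpha_x}(\theta)$ by the very definition of $\Pr_i$ (equivalently $\Pr_if=\phi_i^*\mcI_if$, cf.~\eqref{E:Pi_sym_CK}), and since $N_{i-1}n_i=N_i$ the prefactor is precisely $1$. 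Hence this contribution is $\int_0^{L_i}p_t^{[0,L_i]_D}(\theta_x,\theta)\,(\Pr_i^\bot f)|_{I_{\alpha_x}}(\theta)\,d\theta=P_t^{[0,L_i]_D}(\Pr_i^\bot f)|_{I_{\alpha_x}}(\theta_x)$, the second summand.

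I expect the only genuine obstacle to be the bookkeeping of the normalising constants: one must check that the density $1/N_i$ of $\mu_i$ on a branch, the weight $N_{i-1}$ carried by the kernel, and the factor $n_i$ produced by recombining the three pair-point cases conspire, through $N_{i-1}n_i=N_i$, to leave exactly the fibre difference $\Pr_i^\bot f$ with coefficient one; this is the step in which the case distinction of Theorem~\ref{T:HK_Finfty} is actually used. Having verified the identity for $f\in\mcB_b(F_i)$ (which is the pointwise statement of~\cite[Lemma 2, Section 5]{AR18}), the $L^2$-version follows at once: $F_i$ has finite measure, so $\mcB_b(F_i)$ is dense in $L^2(F_i,\mu_i)$, and both sides define bounded operators on $L^2(F_i,\mu_i)$ --- the left side by the Markov property of $P^{F_i}_t$, the right side through the contractions $\mcI_i$, $\Pr_i^\bot$ followed by the $L^2$-bounded semigroups $P_t^{F_{i-1}}$ and $P_t^{[0,L_i]_D}$ --- whence the equality persists on all of $L^2(F_i,\mu_i)$.
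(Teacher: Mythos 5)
Your computation is correct: the bookkeeping does close up exactly as you predict ($\mu_i$ has density $1/N_i$ with respect to arclength on each level-$i$ branch, the kernel carries the weight $N_{i-1}$, and recombining the $y_2$/$y_3$ cases produces the factor $n_i$, so $N_{i-1}n_i=N_i$ leaves $\Pr_i^\bot f$ with coefficient one), and the disintegration $(\phi_i)_*\mu_i=\mu_{i-1}$ with uniform fibre averages turns the common term into $P_t^{F_{i-1}}(\mcI_i f)(\phi_i(x))$ as claimed. The paper, however, does not prove this lemma at all: it is stated as ``the $L^2$-version of [AR18, Lemma 2, Section 5]'' and simply cited. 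Your route therefore runs in the opposite logical direction from the source: in [AR18] the semigroup decomposition is established first (from the structure of the process, following [BPY89]) and the heat kernel formula of Theorem~\ref{T:HK_Finfty} is \emph{deduced} from it, whereas you take the kernel formula as given and recover the decomposition. Within the present paper, where Theorem~\ref{T:HK_Finfty} is quoted as a black box, this is a legitimate and self-contained verification (and a useful consistency check on the constants); it just should not be read back into [AR18] as an independent proof of their Lemma~2. Two minor points: your argument only covers $t>0$ (at $t=0$ the identity reduces to the trivial decomposition $f=\Pr_i f+\Pr_i^\bot f$ together with $\Pr_i f=\phi_i^*\mcI_i f$, i.e.~\eqref{E:Pi_sym_CK}), and the closing density argument is superfluous since the statement is already pointwise for $f\in\mcB_b(F_i)$, for which all the integrals you manipulate converge absolutely.
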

\section{Useful equalities and inequalities}\label{S:Apx_HKcircle}
We record the following identities relating the heat kernel on an interval and on the circle. Explicit computations can be fairly reproduced with a mathematical computing software.

\begin{lemma}\label{L:HK_formulations}
The heat kernel on the unit circle admits the representations
\begin{equation}\label{E:PoissonFormula}
p^{F_0}_t(\theta,\tilde{\theta})=\frac{1}{\sqrt{4\pi t}}\sum_{k\in\mbbZ}e^{-\frac{(\theta-\tilde{\theta}-2\pi k)^2}{4t}}=\frac{1}{2\pi}+\frac{1}{\pi}\sum_{k\geq 1}e^{-k^2t}\cos(k(\tilde{\theta}-\theta)).
\end{equation}
For any $L>0$, the heat kernel on the interval $[0,L]$ with Dirichlet boundary conditions admits the representation
\begin{equation}\label{E:1d_Dir_HK}
p_t^{[0,L]_D}(\theta,\tilde{\theta})=\frac{2}{L}\sum_{k=1}^\infty e^{-\frac{k^2\pi^2 t}{L^2}}\sin\Big(\frac{k\pi\theta}{L}\Big)\sin \Big(\frac{k\pi\tilde{\theta}}{L}\Big).
\end{equation}
Both heat kernels are related through the identity
\begin{equation}\label{E:Dir_HKvsF0}
p_t^{[0,L]_D}(\theta,\tilde{\theta})=
\frac{\pi}{L}\Big(p^{F_0}_{\pi^2t/L^2}\Big(\frac{\pi \theta}{L},\frac{\pi \tilde{\theta}}{L}\Big) - p^{F_0}_{\pi^2t/L^2}\Big(\frac{\pi \theta}{L},{-}\frac{\pi \tilde{\theta}}{L}\Big)\Big).
\end{equation}
\end{lemma}

\begin{lemma}\label{L:series_bound}
For any $a>0$, 
\begin{equation*}\label{E:series_bound}
\sum_{k=1}^\infty e^{-ak^2}\leq\min\Big\{\frac{\sqrt{\pi}}{2 \sqrt{a}},\frac{1}{a}e^{-a}\Big\}.
\end{equation*}
\end{lemma}
\begin{proof}
The series can be estimated in two different ways. On the one hand, 
\begin{equation*}
\sum_{k=1}^\infty e^{-ak^2}\leq \int_0^\infty e^{-a\xi^2}d\xi=\frac{\sqrt{\pi}}{2 \sqrt{a}}.
\end{equation*}
On the other hand, since $ak^2\geq 2ak\geq a+ak$ for any $k\geq 1$, 
\begin{equation*}
\sum_{k=1}^\infty e^{-ak^2}\leq e^{-a}\sum_{k\geq 1}e^{-ak}\leq e^{-a}\int_0^\infty e^{-a\xi}d\xi= e^{-a}\frac{1}{a}.
\end{equation*}
\end{proof}

\begin{lemma}\label{L:unif_F0}
For any $\theta,\tilde{\theta}\in[0,2\pi)$ and $t>0$,
\begin{equation*}
|p^{F_0}_t(\theta,\tilde{\theta})|\leq \frac{1}{2\pi}+\frac{1}{\sqrt{4\pi t}}.
\end{equation*}
In particular, if $t\in (0,1)$,
\begin{equation*}
|p^{F_0}_t(\theta,\tilde{\theta})|\leq \frac{1}{\sqrt{\pi t}}.
\end{equation*}
\end{lemma}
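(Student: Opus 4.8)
The plan is to read both inequalities directly off the spectral (Fourier) representation of the circle heat kernel recorded in~\eqref{E:PoissonFormula}, combined with the elementary series estimate of Lemma~\ref{L:series_bound}. Since $p^{F_0}_t$ is a genuine heat kernel it is non-negative, so the absolute value may be dropped immediately and it suffices to bound $p^{F_0}_t(\theta,\tilde\theta)$ from above.

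First I would invoke the second representation in~\eqref{E:PoissonFormula},
\[
p^{F_0}_t(\theta,\tilde\theta)=\frac{1}{2\pi}+\frac{1}{\pi}\sum_{k\geq 1}e^{-k^2t}\cos\big(k(\tilde\theta-\theta)\big),
\]
and bound each cosine by $1$, which gives $p^{F_0}_t(\theta,\tilde\theta)\leq \frac{1}{2\pi}+\frac{1}{\pi}\sum_{k\geq 1}e^{-k^2t}$. Applying Lemma~\ref{L:series_bound} with $a=t$ controls the tail by $\sum_{k\geq 1}e^{-k^2t}\leq \frac{\sqrt{\pi}}{2\sqrt{t}}$, whence $\frac{1}{\pi}\sum_{k\geq 1}e^{-k^2t}\leq \frac{1}{2\sqrt{\pi t}}=\frac{1}{\sqrt{4\pi t}}$. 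This already yields the first assertion $p^{F_0}_t(\theta,\tilde\theta)\leq \frac{1}{2\pi}+\frac{1}{\sqrt{4\pi t}}$.

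For the refinement when $t\in(0,1)$, I would simply compare the two summands: the inequality $\frac{1}{2\pi}\leq \frac{1}{\sqrt{4\pi t}}=\frac{1}{2\sqrt{\pi t}}$ is equivalent to $\sqrt{\pi t}\leq \pi$, i.e.\ $t\leq \pi$, which certainly holds for every $t<1$. Hence in this range $\frac{1}{2\pi}+\frac{1}{\sqrt{4\pi t}}\leq \frac{2}{\sqrt{4\pi t}}=\frac{1}{\sqrt{\pi t}}$, giving the stated bound.

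There is no genuine obstacle here: once the Fourier representation and Lemma~\ref{L:series_bound} are available, the estimate is entirely elementary, the only minor point being the constant matching in the last comparison. An alternative route would start from the first (theta-series) representation in~\eqref{E:PoissonFormula}, but the Fourier form splits the constant $\tfrac{1}{2\pi}$ off cleanly and is therefore the more direct choice for the precise shape of the bound requested.
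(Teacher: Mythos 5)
Your proof is correct and follows essentially the same route as the paper: the second (Fourier) representation in~\eqref{E:PoissonFormula}, bounding the cosines by $1$, and controlling $\sum_{k\geq 1}e^{-k^2t}$ by $\int_0^\infty e^{-\xi^2t}\,d\xi=\frac{\sqrt{\pi}}{2\sqrt{t}}$ — your appeal to Lemma~\ref{L:series_bound} is just a packaged form of that same integral comparison. The explicit verification of the $t\in(0,1)$ refinement is a welcome addition, since the paper leaves that step implicit.
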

\begin{proof}
In view of the second expression in~\eqref{E:PoissonFormula},
\begin{align*}
|p^{F_0}_t(\theta,\tilde{\theta})|\leq \frac{1}{2\pi}+\frac{1}{\pi}\sum_{k\geq 1}e^{-k^2t}\leq \frac{1}{2\pi}+\frac{1}{\pi}\int_0^\infty e^{-\xi^2 t}d\xi=\frac{1}{2\pi}+\frac{1}{\sqrt{4\pi t}}.
\end{align*}
\end{proof}
\begin{lemma}\label{L:integral_bound}
For any $a>0$,
\begin{equation*}
\int_a^\infty\frac{1}{\xi^2}e^{-\xi^2t}d\xi=\frac{1}{a}e^{-a^2} - \sqrt{\pi}\operatorname{Erfc}(a)
\end{equation*}
Moreover,
\begin{equation*}
\int_a^\infty\frac{1}{\xi^2}e^{-\xi^2t}d\xi=\frac{1}{a}-\sqrt{\pi t} + a\cdot t - \frac{a^3 t^2}{6} + \frac{a^5 t^3}{30} - \frac{a^7 t^4}{168} + O(t^5)\qquad\text{as }t\to 0
\end{equation*}
\end{lemma}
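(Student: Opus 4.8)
The plan is to reduce everything to a single integration by parts followed by the Maclaurin series of two elementary functions; no case analysis is needed. I abbreviate $I(a,t):=\int_a^\infty \xi^{-2}e^{-\xi^2 t}\,d\xi$ and write $\operatorname{Erfc}(s)=\tfrac{2}{\sqrt\pi}\int_s^\infty e^{-u^2}\,du$ for the complementary error function.

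First I would integrate by parts, taking $v=-1/\xi$ as the antiderivative of $\xi^{-2}$ and differentiating the Gaussian factor. The boundary term vanishes at $\xi=\infty$ and equals $\tfrac1a e^{-a^2 t}$ at $\xi=a$, so that
\begin{equation*}
I(a,t)=\frac1a e^{-a^2 t}-2t\int_a^\infty e^{-\xi^2 t}\,d\xi.
\end{equation*}
The change of variables $u=\xi\sqrt t$ then identifies the remaining integral with $\tfrac{\sqrt\pi}{2\sqrt t}\operatorname{Erfc}(a\sqrt t)$, yielding the closed form
\begin{equation*}
I(a,t)=\frac1a e^{-a^2 t}-\sqrt{\pi t}\,\operatorname{Erfc}(a\sqrt t).
\end{equation*}
Specializing to $t=1$ recovers the first displayed identity, whose right-hand side is exactly the $t=1$ value of this expression.

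For the small-time expansion I would insert the two series $e^{-a^2 t}=\sum_{m\ge0}\tfrac{(-1)^m a^{2m}}{m!}t^m$ and $\operatorname{Erfc}(a\sqrt t)=1-\tfrac{2}{\sqrt\pi}\sum_{m\ge0}\tfrac{(-1)^m a^{2m+1}}{m!(2m+1)}t^{m+1/2}$ into the closed form. The decisive observation is that the prefactor $\sqrt{\pi t}$ turns each half-integer power $t^{m+1/2}$ of the error-function series into the integer power $t^{m+1}$; only the constant $1$ inside $\operatorname{Erfc}$ survives as the genuinely non-analytic term $-\sqrt{\pi t}$. Matching $t^0$ gives $1/a$, and collecting the two contributions at order $t^k$ for $k\ge1$ produces the coefficient
\begin{equation*}
(-1)^k a^{2k-1}\Big(\frac{1}{k!}-\frac{2}{(k-1)!\,(2k-1)}\Big),
\end{equation*}
which evaluates to $a,\,-a^3/6,\,a^5/30,\,-a^7/168$ for $k=1,2,3,4$, exactly as stated.

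The argument is entirely routine; the only place demanding care will be the bookkeeping of the half-integer powers, where a misplaced factorial or sign would corrupt every subsequent coefficient. I would therefore verify the $t$- and $t^2$-coefficients independently against a direct expansion of $I(a,t)$ before trusting the closed formula, and I would note that term-by-term integration and the rearrangement into powers of $t$ are justified since the series for $e^{-a^2 t}$ and $\operatorname{Erf}$ are entire.
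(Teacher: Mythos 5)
Your argument is correct, and it supplies something the paper itself omits: the appendix states this lemma without proof, deferring to the remark that ``explicit computations can be fairly reproduced with a mathematical computing software.'' Your single integration by parts gives the correct closed form $\int_a^\infty \xi^{-2}e^{-\xi^2t}\,d\xi=\tfrac1a e^{-a^2t}-\sqrt{\pi t}\,\operatorname{Erfc}(a\sqrt t)$, and you rightly observe that the first display of the lemma as printed (whose right-hand side is independent of $t$) can only be the $t=1$ specialization of this; the presence of the $-\sqrt{\pi t}$ term in the stated small-time expansion confirms that the general closed form is the one you derived and that the printed identity is missing its factors of $t$. Your series bookkeeping is also correct: the coefficient $(-1)^k a^{2k-1}\bigl(\tfrac{1}{k!}-\tfrac{2}{(k-1)!\,(2k-1)}\bigr)$ does evaluate to $a$, $-a^3/6$, $a^5/30$, $-a^7/168$ for $k=1,\dots,4$, matching the statement, and the justification of term-by-term rearrangement via entirety of the exponential and error-function series is all that is needed. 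In short, where the paper relies on a computer-algebra verification, you give the elementary human-checkable derivation; nothing is missing.
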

\section*{Acknowledgments} The author is greatly thankful to F.\ Baudoin and A.\ Teplyaev for very valuable discussions and comments.

\bibliographystyle{amsplain}
\bibliography{DF2_Refs}
\end{document}